
\documentclass{birkjour}
\usepackage{bm, amsmath, amssymb, amsthm} 
\usepackage{amsmath, amssymb, amsthm}

\newcommand{\R}{{\mathbf R}}
\newcommand{\Z}{{\mathbf Z}}
\newcommand{\N}{{\mathbf N}}

\newcommand    {\e}{{\mathbf x}}
\newcommand    {\by}{{\mathbf y}}

\newcommand    {\C}{{\mathbf C}}

\newtheorem{theorem}{Theorem}[section]
\newtheorem{proposition}[theorem]{Proposition}
\newtheorem{lemma}[theorem]{Lemma}
\newtheorem{corollary}[theorem]{Corollary}
\theoremstyle{definition}
\newtheorem{definition}[theorem]{Definition}
\theoremstyle{remark}
\newtheorem{remark}[theorem]{Remark}
\numberwithin{equation}{section}
\newtheorem{example}{Example}

\begin{document}
	
	\title[Discreteness of the spectrum for Schr\"odinger operator]{
		Conditions for semi-boundedness \\and discreteness of the spectrum \\to Schr\"odinger operator and  some \\nonlinear PDEs}
	
	\author[L. Zelenko]
	{Leonid Zelenko} 
	
	\address{Department of Mathematics \\
		University of Haifa  \\
		Haifa 31905  \\
		Israel}
	\email{zelenko@math.haifa.ac.il}
	
	\begin{abstract}
	For Schr\"odinger operator $H=-\Delta+ V(\e)\cdot$, acting in the space $L_2(\R^d)\,(d\ge 3)$,  necessary and sufficient conditions for semi-boundedness and discreteness of its spectrum.are obtained without assumption that the potential $V(\e)$ is bounded below. By reduction of the problem to
	investigation of existence of regular solutions for Riccati PDE necessary conditions for discreteness of the spectrum of operator $H$ are obtained under assumption that it is bounded below.  These results are similar to ones obtained by author  in \cite{Zel} for the one-dimensional case. Furthermore, sufficient conditions for the semi-boundedness and discreteness of the spectrum of $H$ are obtained in terms of a non-increasing rearrangement, mathematical expectation and standard deviation from the latter for  positive part $V_+(\e)$ of the potential $V(\e)$ on compact domains that go to infinity, under certain restrictions for its negative part $V_-(\e)$. Choosing in an optimal way the vector field associated with  difference between the potential $V(\e)$ and its mathematical expectation on the balls that go to infinity, we  obtain a condition for semi-boundedness and discreteness of the spectrum for $H$ in terms of solutions of Neumann problem for nonhomogeneous $d/(d-1)$-Laplace equation. This type of optimization refers to a divergence constrained transportation problem.
 \end{abstract}
	
	\subjclass{Primary 47F05, 47B25,\\ 35P05; Secondary 81Q10} 	
	
	\keywords{Schr\"odinger operator,
		discreteness of the spectrum, \\ Riccati  PDE, p-Laplacian, transportation problem} 
	
	\maketitle
	
	
	\section{Introduction} \label{sec:introduction}
	\setcounter{equation}{0}
	
In the present paper we consider the Schr\"odinger operator $H=-\Delta+ V(\e)\cdot$,
acting in the space $L_2(\R^d)$. In physical and mathematical literature the coefficient $V(\e)$ is called {\it potential}, since it is the potential of electric field acting on electron. In what follows we assume that $d\ge 3$ and $V\in L_{\infty,loc}(\R^d)$. The last assumption ensures that the operator $H^0$, defined on $C_0^\infty(\R^d)$ by the operation $-\Delta+V(\e)\cdot$, acts in $L_2(\R^d)$.  The operator $H$ is meant as the closure of $H^0$. It is known that the boundedness of $H^0$ from below implies that $H$ is self-adjoint \cite[Theorem 2.1]{K-Sh}.
Sometimes we will impose stronger local conditions on  $V(\e)$\footnote{Here local restrictions for the potential are not so essential. It is important its behavior at infinity.}.

 In our papers \cite{Zel2}, \cite{Zel1} some sufficient conditions for discreteness of the spectrum of $H$ have been
obtained  on the base of  well known Mazya -Shubin criterion \cite{M-Sh}, formulated in Preliminaries (Section \ref{sec:prelim}).  Notice that  Mazya -Shubin result is a significant improvement of conditions obtained by  A.M. Molchanov \cite{Mol}. As we have noticed in \cite{Zel2}, \cite{Zel1}, conditions of Mazya-Shubin criterion are hardly verifiable, since they are formulated in terms of harmonic capacity of sets. In our papers we have obtained easier verifiable sufficient conditions in terms of non-increasing rearrangement of the potential $V(\e)$ with respect to Lebesgue measure on compact domains that go to infinity, its mathematical expectation and standard deviation from the latter with respect to the normalized Lebesgue measure on these domains. There we have used also perturbations of the potential, preserving discreteness of the spectrum of $H$.  It was assumed in \cite{Mol}, \cite{M-Sh} \cite{Zel2} and  \cite{Zel1}  that $V(\e)\ge 0$, but in a part of claims in \cite{Zel2} this condition was
not imposed on the potential.

 Notice that in the papers \cite{Ben-Fort}, \cite{Ben-Fort1}, \cite{Si1}, \cite{L-S-W} and \cite{GMD} some  sufficient conditions for discreteness of the spectrum for $H$ have been found without use of Mazya-Shubin result. In \cite{Ben-Fort1}, \cite{L-S-W} also the case of a unbounded below potential was studied and in \cite{GMD} also the case of a matrix-valued potential was considered. But for a scalar nonnegative potentials  $V(\e)$ our results from \cite{Zel2}, \cite{Zel1} are more general than ones mentioned in this paregraph 

  The goal of present work is to obtain new necessary and sufficient conditions for semi-boundedness and discreteness of the spectrum of operator $H$, covering potentials which are not bounded below. A distinctive feature of our work is the use of nonlinear partial differential equations for the study of this linear problem: Riccati PDE 
(Proposition \ref{prlocRicc}, proof of Theorem \ref{thnesscond1}) and nonhomogeneous $d/(d-1)$-Laplace equation, connected with a divergence constrained transportation problem (Corollary \ref{thchicpert}, Proposition \ref{prtranspr}).

In the one-dimensional case (d=1) Molchanov's criterion acquires rather simple form: if $V(x)\ge 0$, then the spectrum of $H$ is discrete if and only if for any $r>0$
\begin{equation*}
\lim_{|y|\rightarrow\infty}\int_y^{y+r}V(x)\,\mathrm{d}x=\infty.
\end{equation*}
Further in  the one-dimensional case criteria for semi-boundedness and discreteness of the spectrum of $H$, covering unbounded below potentials,   were obtained in \cite{Br}, \cite{Is}, \cite{Zel}, \cite{Bru} and \cite{Mas}. Since Ismagilov's result \cite[Theorem 1] {Is} is close to some claims of this paper, we will formulate it:

\begin{theorem}
	(i) If there exist real-valued functions $\alpha(t)$ and $\beta(t)$,
defined on $(0,r]$ such that
	\begin{equation}\label{cndalphbet}
	\int_0^r\big(\alpha^2(t)+\beta^2(t)\big)
\,dt<\infty
	\end{equation}
	and for any $y\in\R$ the function
	\begin{equation*}
	{\mathcal V}_{y}(s,t)=\int_{y+s}^{y+t}V(v) \mathrm{d}x
	\end{equation*}
satisfies the condition on the triangle $\Delta_r=\{\langle s,t\rangle:\,0\le
	s\le t\le r)\}$:
	\begin{equation}\label{condcalV}
	\forall\,\langle s,t\rangle\in \Delta_r:\quad{\mathcal V}_{y}(s,t)\ge
	\alpha(t)-\beta(s),
	\end{equation}
	then the operator $H$ is bounded below;\vskip2mm
	
	\noindent (ii) If \eqref{cndalphbet} and \eqref{condcalV}  
	 are valid, then the spectrum of $H$ is discrete if and only if the condition
	\begin{equation}\label{mesconvinfty2}
	\forall\,A>0\,:\quad\lim_{|y|\rightarrow\infty}\mathrm
	\mathrm{mes}_2\big(\{\langle s,t\rangle\in\Delta_r:\;{\mathcal
		V}_{y}(s,t)\le A\}\big)=0.
	\end{equation}
	is fulfilled.
	\end{theorem}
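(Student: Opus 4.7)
The plan is to base the proof on a single integral identity. Writing $\varphi(u)=\int_0^u\varphi'$ and $\overline{\varphi(u)}=-\int_u^r\overline{\varphi'}$ for $\varphi\in H_0^1([0,r])$, forming $|\varphi(u)|^2$, multiplying by $V(y+u)$, and swapping orders of integration via Fubini after substituting $\mathcal V_y(s,t)=\int_s^t V(y+\cdot)$ produces
\begin{equation*}
\int_0^r V(y+u)|\varphi(u)|^2\,du=-\int\!\!\!\int_{\Delta_r}\mathcal V_y(s,t)\,\mathrm{Re}\bigl(\varphi'(s)\overline{\varphi'(t)}\bigr)\,ds\,dt.
\end{equation*}
Because $\int_0^r\varphi'=0$, $\mathcal V_y$ may be replaced by $\mathcal V_y-B$ for any constant $B$. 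A single integration by parts gives the equivalent decomposition $\int V|\varphi|^2\,du=\int|\varphi|^2\,dC_y+\int|\varphi|^2\,d\eta_y$, where, setting $W_y(u)=\int_0^u V(y+v)\,dv$, the hypothesis ${\mathcal V}_y(s,t)\ge\alpha(t)-\beta(s)$ is equivalent to $C_y(u):=\sup_{s\le u}(W_y(s)-\beta(s))$ being non-decreasing and $\eta_y:=W_y-C_y$ satisfying $\alpha\le\eta_y\le\beta$ pointwise on $[0,r]$.

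For part (i), I would apply IMS localization with a quadratic partition of unity $\sum\phi_n^2=1$ subordinate to a covering of $\R$ by intervals of length $r$, reducing to the uniform local estimate $\int|\varphi'|^2+V|\varphi|^2\ge-C\|\varphi\|^2$ on $\varphi\in H_0^1([y,y+r])$. Since $dC_y\ge 0$ and $\int|\varphi|^2\,d\eta_y=-2\int\eta_y\mathrm{Re}(\overline\varphi\varphi')\,du$, Cauchy--Schwarz--Young together with the pointwise bound $\eta_y^2\le\alpha^2+\beta^2$ gives
\begin{equation*}
\int V|\varphi|^2\ge -\epsilon\|\varphi'\|^2-\tfrac1\epsilon\int_0^r(\alpha^2+\beta^2)|\varphi|^2\,du.
\end{equation*}
Combined with the infinitesimal form-boundedness of $(\alpha^2+\beta^2)|\varphi|^2$ by $|\varphi'|^2+|\varphi|^2$---a consequence of $\alpha,\beta\in L^2([0,r])$ and the pointwise estimates $|\varphi(u)|^2\le u\|\varphi'\|^2$ near the endpoints together with the Gagliardo--Nirenberg bound $\|\varphi\|_\infty^2\le 2\|\varphi\|_2\|\varphi'\|_2$ on the interior---this closes the semiboundedness argument.

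Part (ii) is handled via Persson's formula $\inf\sigma_{\mathrm{ess}}(H)=\lim_{R\to\infty}\inf\{\langle H\psi,\psi\rangle/\|\psi\|^2:\psi\in C_c^\infty(\{|x|>R\})\}$, so that discreteness amounts to this infimum tending to $+\infty$. For \emph{necessity}, if the measure condition fails at some $A_0>0$, Fubini selects $y_n\to\infty$ and $(s_n,t_n)\in\Delta_r$ with $t_n-s_n\ge\delta_0>0$ and $\mathcal V_{y_n}(s_n,t_n)\le A_0$; rescaling a fixed bump $\chi\in C_c^\infty((0,1))$ to $[y_n+s_n,y_n+t_n]$ and using the identity together with the $L^2$ control of $\alpha,\beta$ in the transition regions yields trial functions with $\langle H\varphi_n,\varphi_n\rangle\le(A_0+K)\|\varphi_n\|^2$ for $K$ independent of $n$, contradicting Persson. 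For \emph{sufficiency}, given $\Lambda>0$, I would choose $A$ large and then $R$ so that $\mathrm{mes}_2\{\mathcal V_y\le A\}$ is small for $|y|\ge R$; the measure condition together with the monotonicity of $C_y$ and the $L^2$ bound on $\eta_y$ forces $C_y(t)-C_y(s)$ to be large on most of $\Delta_r$, and a case split according to whether $\varphi$ is concentrated on an interval where $dC_y$ is small (handled via Poincar\'e applied to the kinetic term) or spreads across the support of $dC_y$ (handled by the Stieltjes integral $\int|\varphi|^2\,dC_y$) then yields $\langle H\varphi,\varphi\rangle\ge\Lambda\|\varphi\|^2$.

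The main obstacle is the case analysis in the sufficiency direction of (ii): the hypothesis controls only the measure of $\{\mathcal V_y\le A\}$, not where the monotone part $dC_y$ concentrates, so one must balance the Poincar\'e-based bound against the Stieltjes integral contribution with constants tracked carefully so that the trade-off closes uniformly as $|y|\to\infty$.
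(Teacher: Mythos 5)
The statement you are asked to prove is Ismagilov's theorem, which the paper merely quotes from the reference~\cite{Is} without giving a proof; the paper's closest relative is Theorem~\ref{thnesscond1}, a multi-dimensional analogue of the \emph{necessity} half only, and that is proved by an entirely different mechanism: the Riccati-PDE reformulation of the localization principle (Proposition~\ref{prlocRicc}), Green's identity on spherical shells, and the blow-up integral inequalities of Section~\ref{sec:intineqRicc} (Lemma~\ref{lmintineq} and Proposition~\ref{mainprAp2}, which hinge on the finite-time blow-up of $w(z)=\sqrt\lambda\tan(\sqrt\lambda z)$). Your route is genuinely different throughout: a monotone decomposition $W_y=C_y+\eta_y$ with $C_y(u)=\sup_{s\le u}(W_y(s)-\beta(s))$ non-decreasing and $\alpha\le\eta_y\le\beta$ a.e., combined with IMS localization for part~(i) and a Persson-formula contrapositive with rescaled bump test functions for the necessity half of~(ii). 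The decomposition itself and the verification $\alpha\le\eta_y\le\beta$ are correct (take $s=u$ in the supremum for the upper bound, and use the hypothesis $W_y(u)-\alpha(u)\ge W_y(s)-\beta(s)$ for the lower one), and the infinitesimal form-bound for~(i) closes once one truncates $\alpha^2+\beta^2$ at a high level before applying Gagliardo--Nirenberg.

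There are, however, two real gaps, one of which you flag yourself. In the necessity argument of~(ii), choosing any $(s_n,t_n)$ from the bad sublevel set with $t_n-s_n\ge\delta_0$ does \emph{not} by itself give a uniform constant $K$: the increment $C_{y_n}(t_n)-C_{y_n}(s_n)\le A_0-\eta_{y_n}(t_n)+\eta_{y_n}(s_n)$ depends on pointwise values of $\eta_{y_n}$, which are only controlled by the $L^2$ functions $\alpha,\beta$. One must additionally discard the thin set $\{u:\alpha^2(u)+\beta^2(u)>M\}$ before selecting $(s_n,t_n)$, which is possible since that set has small $\mathrm{mes}_1$ and the sublevel set has $\mathrm{mes}_2\ge\delta_0$, but this refinement is not in your sketch. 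The larger gap is the sufficiency direction, which you yourself call ``the main obstacle'' but never resolve. After removing the $\eta_y$ contribution one must show that $\|\varphi'\|^2+\int|\varphi|^2\,dC_y\ge\Lambda\|\varphi\|^2$ follows from the smallness of $\mathrm{mes}_2\{(s,t)\in\Delta_r:\,C_y(t)-C_y(s)\le A\}$, and this requires (a) transferring the measure hypothesis from $\mathcal V_y$ to $C_y$, again by trimming the thin set where $\sqrt{\alpha^2+\beta^2}>M$, and (b) a quantitative coercivity estimate. For (b) a case split is avoidable: partition $[0,r]$ at the successive points $a_{j+1}$ where $C_y$ has risen by $A$ past $a_j$, observe $\sum_j(a_{j+1}-a_j)^2\le 2\,\mathrm{mes}_2\{C_y(t)-C_y(s)\le A\}$, use $\inf_{I_j}|\varphi|^2\ge|I_j|^{-1}\int_{I_j}|\varphi|^2-2\|\varphi\chi_{I_j}\|_2\|\varphi'\chi_{I_j}\|_2$, and sum with Cauchy--Schwarz to get $\|\varphi'\|^2+\int|\varphi|^2\,dC_y\ge A\bigl(2\,\mathrm{mes}_2\{C_y(t)-C_y(s)\le A\}\bigr)^{-1/2}-A^2$ when $\|\varphi\|=1$, which tends to $+\infty$ as $|y|\to\infty$ for any fixed $A$. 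As written, without such an estimate, your sufficiency half is an outline rather than a proof.
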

In our work \cite{Zel} for the one-dimensional case some necessary and sufficient conditions for semi-boundedness and discreteness of spectrum of $H$, generalizing Ismagilov's result, were obtained. There we used arguments connected with Riccati ODE $u^\prime=u^2-V(x)+\lambda$. Particularly, in such a way we have shown that condition \eqref{mesconvinfty2} is necessary for discreteness of the spectrum under assumption that operator $H$ is bounded below (without explicit restrictions for the potential) (\cite[\S 3, Theorem 1]{Zel}). This result is based on an upper estimate of Lebesgue measure of some subsets of the triangle $\Delta_r$, connected with integral inequalities of Riccati type. To this end we used the blow-up behavior of some solutions to Riccati ODE (proof of Lemma \ref{lmintineq}).  Notice that 
the claims of Theorem \ref{thnesscond1} of the present paper, yielding necessary conditions for semi-boundedness and discreteness of the spectrum of $H$, can be considered as multi-dimensional analogues of  \cite[\S 3, Theorem 1]{Zel}. The proofs of these claims are based on the same results on integral inequalities from \cite{Zel}. Since \cite{Zel} has been published in Russian and there is no English translation, we have included this material on integral inequalities into Section \ref{sec:intineqRicc}.

 We have obtained also sufficient conditions for semi-boundedness and discreteness of the spectrum of $H$ in terms of the non-increasing rearrangement, mathematical  expectation and standard deviation from the latter for positive  part $V_+(\e)$ of the potential $V(\e)$ under some restrictions for its negative part $V_-(\e)$ (Theorem \ref{thgenermolch1}). 
Furthermore, choosing in an optimal way the vector field associated with the difference between the potential $V(\e)$ and its mathematical expectation on the balls that go to infinity,, we obtain a condition for semi-boundedness and discreteness of the spectrum for $H$ in terms of solution of Neumann problem for non-homogeneous $d/(d-1)$-Laplace equation, mentioned above ( Corollary \ref{thchicpert}). 

The work is organized as follows. After this Introduction in Section \ref{sec:notation} we introduce some basic notations.  In Section \ref{sec:prelim} (Preliminaries)  we recall some notions and notations from the previous paper \cite{Zel2} and formulate the Mazya-Shubin result. In Section \ref{sec:resprevpap} we formulate some results from
\cite{Zel2}, used in this work. In Section \ref{sec:meinres} we formulate main results of this work.
In Section \ref{sec:proofmainres} we prove main results. formulating previously the localization principle from \cite{Zel2} (Proposition \ref{prloc}) and proving the localization principle on the base of Riccati PDE (Proposition \ref{prlocRicc}).  In Section \ref{sec:examples} (Examples) we construct examples which permit 
to  compare our claims with each other. Sections  \ref{sec:transprob} and \ref{sec:intineqRicc} are  appendices.  In section \ref{sec:transprob} Proposition \ref{prtranspr} on the solution of a divergence constrained transportation problem is formulated, which is used in  Corollary \ref{thchicpert}. Section \ref{sec:intineqRicc} is devoted to integral inequalities of Riccati type, mentioned above.

\section{Basic notations} \label{sec:notation}
\setcounter{equation}{0}

\noindent $\e=\langle x_1, x_2,\dots x_d\rangle$ is a vector in the space $\R^d$;
\vskip1mm 
\noindent$|\e|=\sqrt{\sum_{k=1}^d x_k^2 }$ and  $|\e|_\infty=\max_{1\le j\le d}|x_j|$ are the norms in $\R^d$;
\vskip1mm

\noindent$\Z$ is the ring of integers; $\N$ is the set of natural numbers;
\vskip1mm

\noindent$[x]$ is the integer part of a real number $x$;
\vskip1mm

\noindent $\prod_{k=1}^d S_k$ is the cartesian product of sets $S_1,\,S_2,\dots,\, S_d$;

\noindent$\Z^d=\prod_{k=1}^d \Z$;\vskip1mm
\vskip1mm

\noindent $B_r(\by)$ is the open ball in $\R^d$, centered at $\by$ and having radius  $r$; $\bar B_r(\by)$ is its closure;
\vskip1mm

\noindent $\omega_d$ is the hypersurface area of unit sphere $S^{d-1}=\partial B_1( 0)$;
\vskip1mm

\noindent $\mathrm{mes}_d$ is Lebesgue measure in $\R^d$;
\vskip1mm

\noindent $\Sigma_L(\Omega)$ is the $\sigma$-algebra of all Lebesgue measurable subsets of of a domain $\Omega\subseteq\R^d$;
\vskip1mm

\noindent $\Vert f\Vert_p\,(p\in[1,\,\infty])$ is the $L_p$-norm of a  function $f\in L_p(\Omega)\,(\Omega\subseteq\R^d)$; 
 \vskip1mm
 
\noindent We denote  by $(\cdot,\cdot)_\Omega$ and  $\Vert\cdot\Vert_\Omega$ the
 inner product and  the norm in the space $L_2(\Omega)$;
\vskip1mm

\noindent For the analogous spaces of vector-valued functions and the norms in them we shall use the same notations;
\vskip1mm

\noindent $\mathrm{supp}(f)$ is the support of a function $f:\,\Omega\rightarrow\C$ ;
\vskip1mm

\noindent $C_0^\infty(\Omega)$ is the collection of all functions from $C^\infty(\Omega)$ having compact supports  in an open domain $\Omega\subseteq\R^d$.
\vskip1mm

Some specific notations will be introduced in what follows.

\section{Preliminaries}\label{sec:prelim}
	\setcounter{equation}{0}
	
 We will recall  some notions and notations from the previous paper \cite{Zel2}. 
	
	Harmonic capacity of a compact set $F\subset\R^d$ is defined in the following manner:
	\begin{eqnarray}\label{dfWincap}
	&&\mathrm{cap}(F):=\inf\Big(\big\{\int_{\R^d}|\nabla u(\e)|^2\,
	\mathrm{d}\e\,:\;u\in C^\infty(\R^d), \;u\ge
	1\;\mathrm{on}\;F,\nonumber\\
	&&u(\e)\rightarrow 0\;\mathrm{as}\;|\e|\rightarrow\infty \big\}\Big)
	\end{eqnarray}
	It is known (\cite{Ch}, \cite{Maz}, \cite{Maz1}) that the set function ``cap'' can be extended in a suitable manner from the collection of all compact subsets of the space $\R^d$ to the $\sigma$-algebra $\Sigma_B(\R^d)$ of all Borel subsets of it
and the {\it isocapacity inequality} is valid:
\begin{equation}\label{isocapineq}
\forall\,F\in\Sigma_B(\R^d):\quad\mathrm{mes}_d(F)\le
c_d\,(\mathrm{cap}(F))^{d/(d-2)}.
\end{equation}
with $c_d=\big(d(d-2)(\mathrm{mes}_d(B_1(0)))^{2/d}\big)^{-d/(d-2)}$, which  comes as
identity if $F$ is a closed ball.

Consider in $\R^d$ an open domain $\mathcal{G}$ satisfying the conditions:

(a) $\mathcal{G}$ is bounded and star-shaped with respect to any point of an open ball $B_\rho(0)\,(\rho>0)$ contained in $\mathcal{G}$;

(b) $\mathrm{diam}(\mathcal{G})=2$.

As it was noticed in \cite{M-Sh}, condition (a) implies that $\mathcal{G}$ can be represented in the form
\begin{equation}\label{formofcalG}
\mathcal{G}=\{\e\in\R^d:\,\e=r\omega,\, |\omega|=1,\,0\le r<r(\omega)\}, 
\end{equation} 
where $r(\omega)$ is a positive Lipschitz function on the standard unit sphere $S^{d-1}\subset\R^d$. For $r>0$
and $\by\in\R^d$ denote 
\begin{equation}\label{dfcalGry}
\mathcal{G}_r(\by):=\{\e\in\R^d:\,r^{-1}\e\in\mathcal{G}\}+\{\by\}.
\end{equation}

As in \cite{M-Sh}, consider the collection
$\mathcal{N}_{\gamma}(\by,r)\;(\gamma\in(0,1))$  of all
compact sets $F\subseteq\bar{\mathcal{G}}_r(\by)$ satisfying the condition
\begin{equation}\label{defNcap}
\mathrm{cap}(F)\le\gamma\,\mathrm{cap}(\bar{\mathcal G}_r(\by)),
\end{equation}

The Mazya-Shubin result, mentioned in Introduction, is:

\begin{theorem}\label{thMazSh}\cite[Theorem 2,2]{M-Sh}  
	If $V(\e)\ge 0$, the spectrum of operator $H$ is discrete as soon as 
	for some $r_0>0$ and for any $r\in(0,r_0)$ the condition
	\begin{equation}\label{cndmolch1}
	\lim_{|\by|\rightarrow\infty}\inf_{F\in\mathcal{N}_{\gamma(r)}(\by,r)}\int_{{\mathcal G}_r(\by)\setminus
		F}V(\e)\, \mathrm{d}\e=\infty,
	\end{equation}
	is satisfied, where 
	\begin{equation}\label{cndgammar}
	\forall\,r\in(0,\,r_0):\;\gamma(r)\in(0,1)\quad\mathrm{and}\quad
	\limsup_{r\downarrow 0} r^{-2}\gamma(r)=\infty,
	\end{equation}	
\end{theorem}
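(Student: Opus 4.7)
The plan is to prove Theorem \ref{thMazSh} via the quadratic-form characterization of discreteness. Since $V\ge 0$ and $V\in L_{\infty,loc}(\R^d)$, the closed non-negative form $Q_H[u]=\int_{\R^d}(|\nabla u|^2+V u^2)\,\mathrm{d}\e$ is defined on $\mathcal{D}[Q_H]=\{u\in H^1(\R^d):V^{1/2}u\in L_2\}$, and the spectrum of the associated self-adjoint operator $H$ is discrete iff $\mathcal{D}[Q_H]\hookrightarrow L_2(\R^d)$ compactly. Since the Rellich--Kondrachov theorem gives compactness on every ball $B_R(0)$, it suffices to establish the decay-at-infinity bound
\begin{equation*}
\lim_{R\to\infty}\sup\bigl\{\|u\|_{L_2(\R^d\setminus B_R(0))}^2 : u\in\mathcal{D}[Q_H],\ Q_H[u]=1\bigr\}=0.
\end{equation*}

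The heart of the argument is a local capacitary Poincaré--Mazya inequality: I would prove that there exists $C_0=C_0(\mathcal{G})$ such that for every $r>0$, $\by\in\R^d$, $\gamma\in(0,1)$, and $u\in H^1(\mathcal{G}_r(\by))$,
\begin{equation*}
\int_{\mathcal{G}_r(\by)} u^2\,\mathrm{d}\e\le C_0\Bigl(\frac{r^2}{\gamma}\int_{\mathcal{G}_r(\by)}|\nabla u|^2\,\mathrm{d}\e+\frac{1}{\nu_\gamma(\by,r)}\int_{\mathcal{G}_r(\by)}V u^2\,\mathrm{d}\e\Bigr),
\end{equation*}
where $\nu_\gamma(\by,r):=\inf_{F\in\mathcal{N}_\gamma(\by,r)}\int_{\mathcal{G}_r(\by)\setminus F}V\,\mathrm{d}\e$. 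The starting point is the Mazya ``vanishing-trace'' inequality $\int_\Omega u^2\le C\,r^d\,\mathrm{cap}(F)^{-1}\int_\Omega|\nabla u|^2$ for $u|_F\equiv 0$, together with the isocapacity identity \eqref{isocapineq} which gives $\mathrm{cap}(\bar{\mathcal{G}}_r(\by))\asymp r^{d-2}$. One then removes the vanishing hypothesis by a truncation-at-level-$A$ argument: write $u=u\wedge\sqrt{A}+(u-\sqrt{A})_+$, apply the vanishing-trace bound to the second piece on $F_A:=\{u^2\le A\}$, choose $A$ so that $\mathrm{cap}(F_A)$ just hits the capacitary threshold $\gamma\,\mathrm{cap}(\bar{\mathcal{G}}_r(\by))$ (continuity and monotonicity in $A$ make this possible), and absorb the bounded first piece into the $V$-integral using $A\cdot\mathrm{mes}_d(\mathcal{G}_r(\by)\setminus F_A)\le\nu_\gamma(\by,r)^{-1}\int V u^2$.

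Next I would cover $\R^d$ by a lattice-like family of translates $\{\mathcal{G}_r(\by_k)\}$ with bounded overlap multiplicity $N_0=N_0(\mathcal{G})$ and sum the local inequality over those $k$ with $|\by_k|>R$, producing
\begin{equation*}
\int_{|\e|>R} u^2\,\mathrm{d}\e\le N_0 C_0\Bigl(\frac{r^2}{\gamma(r)}+\frac{1}{\inf_{|\by|>R-r}\nu_{\gamma(r)}(\by,r)}\Bigr) Q_H[u].
\end{equation*}
Given $\varepsilon>0$, use \eqref{cndgammar} to fix $r\in(0,r_0)$ with $N_0 C_0\,r^2/\gamma(r)<\varepsilon/2$; with that $r$ frozen, the hypothesis \eqref{cndmolch1} yields $R$ so large that the second term is $<\varepsilon/2$. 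This gives the required decay and hence the form-domain compactness.

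The main obstacle is the level-set/truncation step that converts the vanishing-trace Mazya inequality into the master local inequality above. One must pick the threshold $A$ so that simultaneously (i) the sub-level set $\{u^2\le A\}$ lies in $\mathcal{N}_{\gamma}(\by,r)$, (ii) the truncated tail is handled by the capacitary inequality, and (iii) the bounded part is paid for by $\int_{\mathcal{G}_r(\by)\setminus F_A}V\,\mathrm{d}\e\ge\nu_\gamma(\by,r)$. Threading this balance, with the correct scaling extracted from \eqref{isocapineq}, is the technical core of the Mazya--Shubin argument; everything else is a routine combination of the form criterion, Rellich, and a covering sum.
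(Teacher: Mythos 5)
The paper does not supply a proof of this statement: it is quoted verbatim as \cite[Theorem 2.2]{M-Sh}, so there is no in-paper argument to compare against. Your sketch is a substantially faithful reconstruction of the Mazya--Shubin route: the quadratic-form/Rellich reduction to a decay-at-infinity estimate, a capacitary Poincar\'e inequality on the scaled-translated domains $\mathcal{G}_r(\by)$ obtained by a level-set truncation, a bounded-overlap covering of $\R^d$, and finally the two-step choice ``first $r$ small via \eqref{cndgammar}, then $R$ large via \eqref{cndmolch1}.'' The architecture and scalings are right.

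Two places need more care than your sketch admits, though neither is fatal. First, the master local inequality should carry a factor of order $\mathrm{mes}_d(\mathcal{G}_r(\by))\sim r^d$ on the potential term: it enters through $\int_{\mathcal{G}_r(\by)}(|u|\wedge\sqrt{A})^2\le A\,\mathrm{mes}_d(\mathcal{G}_r(\by))$, and while it is harmless once $r$ is frozen before $R\to\infty$, the inequality as displayed is not dimensionally consistent without it. Second, the claim that ``continuity and monotonicity in $A$'' let you pick $A$ so that $\mathrm{cap}(\overline{\{u^2\le A\}})$ equals $\gamma\,\mathrm{cap}(\bar{\mathcal{G}}_r(\by))$ does not hold: this map is monotone but can jump. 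The fix is to set
$A^*=\sup\bigl\{A\ge 0:\ \overline{\{|u|^2\le A\}}\in\mathcal{N}_\gamma(\by,r)\bigr\}$
and use that Choquet capacity is continuous along decreasing sequences of compacta, so $\mathrm{cap}(\overline{\{|u|^2\le A^*\}})\ge\gamma\,\mathrm{cap}(\bar{\mathcal{G}}_r(\by))$, which lets the vanishing-trace inequality act on $(|u|-\sqrt{A^*})_+$; while for every $A<A^*$ the sublevel set is negligible, so $\int_{\mathcal{G}_r(\by)}V u^2\ge A\,\nu_\gamma(\by,r)$ and hence $A^*\nu_\gamma(\by,r)\le\int_{\mathcal{G}_r(\by)}V u^2$ on passing to the supremum. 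One should also work with $|u|$ rather than $u$ (so that the sublevel sets and the decomposition make sense for signed $u$), and note that $|\nabla(|u|-\sqrt{A^*})_+|\le|\nabla u|$ a.e. With these adjustments your argument closes.
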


 Also a necessary condition for discreteness of the spectrum was obtained in \cite{M-Sh}, which is close to sufficient one. Notice that it was proved in \cite{Mol}  that the condition
\begin{equation}\label{intVtendinf} 
\forall\; r\in (0,\,r_0]\quad\lim_{|\by|\rightarrow\infty}\int_{\mathcal{G}_r(\by)}V(\e)\,\mathrm{d}\e=\infty
\end{equation}
is necessary for discreteness of the spectrum of $H$ under assumption that $V(\e)\ge 0$.

\begin{remark}\label{remsembound}
In Section \ref{sec:examples} we will construct an example (Example \ref{ex2}), which shows that condition \eqref{cndmolch1}  and semi-boundedness of the
	operator $H$ do not imply the discreteness of its spectrum, if boundedness from below of the potential $V(\e)$ is not supposed.
\end{remark}

For a function $W\in L_1(\Omega)$, where $\Omega\subset\R^d$ is a bounded domain,  consider the non-increasing rearrangement $W^\star(t,\Omega)$ of it, defined in the following manner:
\begin{equation}\label{dfSVyrdel}
W^\star(t,\Omega):=\sup\{s>0\,:\;\lambda^\star(s,\,W,\,\Omega)\ge
t\}\quad (t>0), 
\end{equation}
where
\begin{eqnarray}\label{dfLVsry}
&&\lambda^\star(s,\,W,\,\Omega)=\mathrm{mes}_d(\mathcal{L}^\star(s,\,W,\,\Omega)),\nonumber\\
&&\mathcal{L}^\star(s,\,W,\,\Omega)=\{\e\in\Omega:\; W(\e)\ge s\}.
\end{eqnarray}
In the case where $\Omega=\mathcal{G}_r(\by)$  we will write for brevity $W^\star(t,\,\by,\,r)$  instead of $W^\star(t,\mathcal{G}_r(\by))$.

Let  $(X,\,\Sigma,\,\mu)$ be a probability space.  Consider the mathematical expectation $\mathrm{E}(W)$ of a real-valued function $W(x)$: 
\begin{equation}\label{dfedxpec}
\mathrm{E}(W)=\int_XW(x)\mu(dx)
\end{equation}
and (standard) deviation from it
\begin{eqnarray}\label{dfVar}
&&\mathrm{Dev}(W)=\sqrt{E\big((W-\mathrm{E}(W))^2\big)}=\sqrt{\int_X\big(W(x)-\mathrm{E}(W)\big)^2\mu(dx)}=\nonumber\\
&&\sqrt{\mathrm{E}(W^2)-\big(\mathrm{E}(W)\big)^2}, 
\end{eqnarray}
assuming that  $W\in L_2(X,\mu)$. In particular, we will consider the probability space
\begin{equation*}
 \big(\mathcal{G}_r(\by),\Sigma_L(\mathcal{G}_r(\by)),m_{d,r}\big),
\end{equation*}
where $m_{d,r}$ is the normalized Lebesgue measure
\begin{equation}\label{dfmdr}
m_{d,r}(A):=\frac{\mathrm{mes}_d(A)}{\mathrm{mes}_d({\mathcal G}_r(\by))}\quad(A\in\Sigma_L(\bar {\mathcal G}_r(\by))).
\end{equation}
Denote by $\mathrm{E}_{\by,r}(W)$ and $\mathrm{Dev}_{\by,r}(W)$ the mathematical expectation and deviation from it for a real-valued  function $W\in L_2(\mathcal{G}_r(\by),\, m_{d,r})$

Let $\Omega$ be an open and bounded domain in $\R^d.$ Following to \cite{M-V}, for $W\in L_1(\Omega)$ and $s\ge 1$ consider the
set ${\mathcal D}_s(W,\,\Omega)$ of all vector fields $\vec F(\e)$ on $\Omega$ satisfying the divergence equation 
\begin{equation}\label{divereq}
\mathrm{div}\,\vec F=W
\end{equation} 
and belonging to the space $L_s(\Omega)$.
We assume that  this equation  is
satisfied in the sense of distributions and we have shown that ${\mathcal D}_s(W,\,\Omega)\neq\emptyset$ \cite[Definitions A.2, A.3, Propositions A.1, A.2 ]{Zel2}.  
Denote
\begin{equation}\label{defDsW}
D_s(W,\Omega)=\inf_{\vec F\in{\mathcal D}_s(W,\,\Omega)}\Vert\vec F\Vert_s,
\end{equation}
In the case where $\Omega=\mathcal{G}_r(\by)$ we will write briefly 
$\mathcal{D}_s(W,\by,r)$ and $D_s(W,\by,r)$.

\section{Some results from the previous work}\label{sec:resprevpap}
\setcounter{equation}{0}

	In  the next three claims  we assumed that $V(\e)\ge 0$.
	
	Denote by $\mathcal{M}_{\gamma}(\by,r)\;(\gamma\in(0,1))$ the
	collection of all Borel sets $F\subseteq\mathcal{G}_r(\by)$
	satisfying the condition
	$\mathrm{mes}_d(F)\le\gamma\,\mathrm{mes}_d(\mathcal{G}_r(\by))$.
	A direct use of Theorem \ref{thMazSh} and  isocapacity inequality \eqref{isocapineq}
	leads to the following claim:
	
	\begin{proposition}\label{thcondlebesgue}\cite[ Proposition 4.5]{Zel2}
		If for some $r_0>0$ and a  function $\gamma(r)$,	satisfying the conditions 
		\begin{equation}\label{cndtildgam1}
		\forall\,r\in(0,\,r_0):\;\gamma(r)\in(0,1)\quad\mathrm{and}\quad\limsup_{r\downarrow
			0}\,r^{-2d/(d-2)}\,\gamma(r)=\infty,
		\end{equation}
		the condition
		\begin{equation}\label{cndlebesgue}
		\lim_{|\by|\rightarrow\infty}\inf_{F\in\mathcal{M}_{\gamma(r)}(\by,r)}\int_{\mathcal{G}_r(\by)\setminus
			F}V(\e)\, \mathrm{d}\e=\infty
		\end{equation}
		is satisfied for any $r\in(0,r_0]$, then the spectrum of operator
		$H=-\Delta+V(\e)$ is discrete.
	\end{proposition}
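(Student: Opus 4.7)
The plan is to derive the proposition from the Mazya-Shubin criterion (Theorem \ref{thMazSh}) by translating the Lebesgue-measure smallness condition defining $\mathcal{M}_\gamma$ into the capacitary smallness condition defining $\mathcal{N}_\gamma$, via the isocapacity inequality \eqref{isocapineq}. Since Section \ref{sec:resprevpap} assumes $V(\e)\ge 0$, Theorem \ref{thMazSh} is applicable, and it suffices to produce a function $\tilde\gamma(r)$ fulfilling \eqref{cndgammar} for which \eqref{cndmolch1} follows from the hypothesis \eqref{cndlebesgue}.

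First I would record the scaling behavior of capacity. The change of variables $\e\mapsto r\e+\by$ in \eqref{dfWincap} shows that $\mathrm{cap}$ is translation invariant and homogeneous of degree $d-2$, so $\mathrm{cap}(\bar{\mathcal{G}}_r(\by))=r^{d-2}\,\mathrm{cap}(\bar{\mathcal{G}})$. Combined with $\mathrm{mes}_d(\mathcal{G}_r(\by))=r^d\,\mathrm{mes}_d(\mathcal{G})$, this shows that the ratio
$$A\;:=\;\frac{c_d\,\bigl(\mathrm{cap}(\bar{\mathcal{G}}_r(\by))\bigr)^{d/(d-2)}}{\mathrm{mes}_d(\mathcal{G}_r(\by))}\;=\;\frac{c_d\,\bigl(\mathrm{cap}(\bar{\mathcal{G}})\bigr)^{d/(d-2)}}{\mathrm{mes}_d(\mathcal{G})}$$
is a positive constant depending only on $d$ and $\mathcal{G}$, independent of $r$ and $\by$.

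Second, given $\gamma(r)$ as in the statement I would set $\tilde\gamma(r):=(\gamma(r)/A)^{(d-2)/d}$, shrinking $r_0$ if necessary so that $\tilde\gamma(r)\in(0,1)$ on $(0,r_0]$. For any compact $F\in\mathcal{N}_{\tilde\gamma(r)}(\by,r)$, the isocapacity inequality \eqref{isocapineq}, the defining property \eqref{defNcap}, and the identity above give
$$\mathrm{mes}_d(F)\;\le\;c_d\,(\mathrm{cap}(F))^{d/(d-2)}\;\le\;c_d\,\tilde\gamma(r)^{d/(d-2)}\,\bigl(\mathrm{cap}(\bar{\mathcal{G}}_r(\by))\bigr)^{d/(d-2)}\;=\;\gamma(r)\,\mathrm{mes}_d(\mathcal{G}_r(\by)),$$
hence $F\in\mathcal{M}_{\gamma(r)}(\by,r)$. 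Thus $\mathcal{N}_{\tilde\gamma(r)}(\by,r)\subseteq\mathcal{M}_{\gamma(r)}(\by,r)$, and since $V\ge 0$ the infimum in \eqref{cndmolch1}, taken over the smaller class, dominates the infimum in \eqref{cndlebesgue} taken over the larger class.

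Third, I would verify that $\tilde\gamma$ satisfies \eqref{cndgammar}. The identity
$$r^{-2}\tilde\gamma(r)\;=\;A^{-(d-2)/d}\,\bigl(r^{-2d/(d-2)}\gamma(r)\bigr)^{(d-2)/d}$$
makes $\limsup_{r\downarrow 0}r^{-2d/(d-2)}\gamma(r)=\infty$ equivalent to $\limsup_{r\downarrow 0}r^{-2}\tilde\gamma(r)=\infty$. Combining the dominance of the infima with the hypothesis \eqref{cndlebesgue} then yields \eqref{cndmolch1}, and Theorem \ref{thMazSh} delivers the discreteness of the spectrum. This is essentially a bookkeeping argument without a real obstacle; the one point deserving care is the scaling identity $\mathrm{cap}(\bar{\mathcal{G}}_r(\by))=r^{d-2}\,\mathrm{cap}(\bar{\mathcal{G}})$, which is classical but has to be invoked explicitly because $\mathcal{G}$ is only a star-shaped domain and not a ball.
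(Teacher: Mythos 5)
Your proof is correct and follows exactly the route the paper indicates (``a direct use of Theorem \ref{thMazSh} and isocapacity inequality \eqref{isocapineq}''): converting the capacitary class $\mathcal{N}_{\tilde\gamma}$ into the Lebesgue-measure class $\mathcal{M}_\gamma$ via \eqref{isocapineq} and the $(d-2)$-homogeneity of capacity, and checking that the rescaled $\tilde\gamma$ inherits \eqref{cndgammar}. The only cosmetic caveat is that $\mathcal{N}_{\tilde\gamma(r)}(\by,r)$ consists of compacta in $\bar{\mathcal{G}}_r(\by)$ while $\mathcal{M}_{\gamma(r)}(\by,r)$ requires $F\subseteq\mathcal{G}_r(\by)$, so the inclusion holds only after intersecting with the open domain --- harmless since $\partial\mathcal{G}_r(\by)$ is Lebesgue-null and leaves both the measure of $F$ and the integral over the complement unchanged.
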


\begin{remark}\label{remwithoutnonneg}
When proving the above statement, we did not use the non-negativity of the potential $V(\e)$ in order to show that the conditions of  Theorem \ref{thMazSh} follow from its conditions. 
\end{remark}

On the base of Proposition \ref{thcondlebesgue} we obtained the following claims:

\begin{proposition}\label{thcondlebesrear}\cite[ Proposition 4.7]{Zel2}
	If for some $r_0>0$ and a function $\gamma(r)$, satisfying conditions \eqref{cndtildgam1}, 
	the condition
	\begin{equation}\label{cndlebesrear}
	\lim_{|\by|\rightarrow\infty} V^\star(\delta(r),\by,r)=\infty
	\end{equation}
	is fulfilled  for any $r\in(0,r_0]$ with $\delta(r)=\gamma(r)\mathrm{mes}_d(\mathcal{G}_r(0)$, then the spectrum of operator
	$H=-\Delta+V(\e)$ is discrete.
\end{proposition}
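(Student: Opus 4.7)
My plan is to reduce the statement to the already proved Proposition \ref{thcondlebesgue} by applying it with a slightly shrunk cutoff function. Specifically, I would set $\tilde\gamma(r):=\gamma(r)/2$; then $\tilde\gamma$ still satisfies \eqref{cndtildgam1}, and the associated threshold $\tilde\delta(r):=\tilde\gamma(r)\,\mathrm{mes}_d(\mathcal{G}_r(0))$ equals $\delta(r)/2$. It then suffices to verify \eqref{cndlebesgue} for $\tilde\gamma$.

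The core of the argument is a bathtub-type lower bound for $\int_{\mathcal{G}_r(\by)\setminus F}V$. Fix $r\in(0,r_0]$. For any $F\in\mathcal{M}_{\tilde\gamma(r)}(\by,r)$, so $\mathrm{mes}_d(F)\le\delta(r)/2$, and any $s$ with $0<s<V^\star(\delta(r),\by,r)$, the definition \eqref{dfSVyrdel} gives $\lambda^\star(s,V,\mathcal{G}_r(\by))\ge\delta(r)$, whence
\[
\mathrm{mes}_d\big(\{\e\in\mathcal{G}_r(\by):\,V(\e)\ge s\}\setminus F\big)\ge\delta(r)-\delta(r)/2=\delta(r)/2.
\]
Using $V\ge 0$ I would then conclude
\[
\int_{\mathcal{G}_r(\by)\setminus F}V(\e)\,\mathrm{d}\e\ge s\cdot\delta(r)/2,
\]
and letting $s\uparrow V^\star(\delta(r),\by,r)$ would produce, uniformly in $F$,
\[
\int_{\mathcal{G}_r(\by)\setminus F}V(\e)\,\mathrm{d}\e\ge\frac{\delta(r)}{2}\,V^\star(\delta(r),\by,r).
\]

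For each fixed $r$ the factor $\delta(r)/2$ is a strictly positive constant, so the hypothesis that $V^\star(\delta(r),\by,r)\to\infty$ as $|\by|\to\infty$ forces the infimum over $F$ to diverge, which is exactly condition \eqref{cndlebesgue} for the cutoff $\tilde\gamma$; Proposition \ref{thcondlebesgue} then yields discreteness of the spectrum of $H$. The only subtle point, and really the whole content of the proof, is recognizing that a gap between the measure constraint on $F$ and the index $\delta(r)$ at which $V^\star$ is evaluated must be built in: without such a gap an adversary could place the entire excluded mass exactly on the superlevel set $\{V\ge s\}$ and drive the remaining integral to zero. The factor $1/2$ is arbitrary — any constant strictly less than $1$ would do — so the cost of this adjustment is negligible and the conditions on $\gamma$ transfer without loss.
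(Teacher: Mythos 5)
Your argument is correct, and since the paper simply cites this result from \cite{Zel2} (Proposition 4.7) without reproducing the proof, there is nothing in the present text to compare against; but the bathtub estimate $\int_{\mathcal{G}_r(\by)\setminus F}V\ge s\cdot\big(\lambda^\star(s,V,\mathcal{G}_r(\by))-\mathrm{mes}_d(F)\big)$, applied after shrinking $\gamma$ to $\gamma/2$ so that the excluded measure $\delta(r)/2$ sits strictly below the level $\delta(r)$ at which $V^\star$ is evaluated, is exactly the natural route from Proposition \ref{thcondlebesgue} to this statement. You correctly identified the one genuine subtlety (without the built-in gap the infimum over $F$ could vanish), you checked that $\tilde\gamma$ inherits \eqref{cndtildgam1}, and the translation invariance giving $\mathrm{mes}_d(\mathcal{G}_r(\by))=\mathrm{mes}_d(\mathcal{G}_r(0))$ is used implicitly but correctly, so I see no gaps.
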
	
	
\begin{proposition}\label{thdiscLagr}\cite[ Theorem 5.2]{Zel2}
	Suppose that  $V(\e)\ge 0$,
	$V\in L_{2,loc}(\R^d)$ and the condition 
	\begin{equation}\label{cnddisclarg}
	\lim_{|\by|\rightarrow\infty}\Big(\mathrm{E}_{\by,r}(V)-\sqrt{\gamma(r)}\cdot\mathrm{Dev}_{\by,r}(V)\Big) =+\infty
	\end{equation}
	is satisfied for some $r_0>0$ and any $r\in(0,r_0)$, where $\gamma(r)$ satisfies conditions \eqref{cndtildgam1}. Then the spectrum of operator
	$H=-\Delta+V(\e)$ is discrete.
\end{proposition}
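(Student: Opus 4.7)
The plan is to reduce this statement to Proposition \ref{thcondlebesrear} (which yields discreteness from $V^\star(\delta(r),\by,r)\to\infty$) by extracting a pointwise lower bound for the non-increasing rearrangement from the hypothesis \eqref{cnddisclarg} via the Cantelli one-sided Chebyshev inequality.

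Applied to the real-valued random variable $V$ on the probability space $(\mathcal{G}_r(\by),\Sigma_L(\mathcal{G}_r(\by)),m_{d,r})$, Cantelli's inequality asserts that for every $\lambda>0$,
$$m_{d,r}\big(\{V>\mathrm{E}_{\by,r}(V)-\lambda\,\mathrm{Dev}_{\by,r}(V)\}\big)\ge \frac{\lambda^2}{1+\lambda^2}.$$
Choosing $\lambda=\sqrt{\gamma(r)}$ makes the right-hand side equal to $\tilde\gamma(r):=\gamma(r)/(1+\gamma(r))$. Translating this back through the definitions \eqref{dfSVyrdel}--\eqref{dfLVsry} of the rearrangement then gives the key pointwise estimate
$$V^\star(\tilde\delta(r),\by,r)\ge \mathrm{E}_{\by,r}(V)-\sqrt{\gamma(r)}\,\mathrm{Dev}_{\by,r}(V),\qquad \tilde\delta(r):=\tilde\gamma(r)\,\mathrm{mes}_d(\mathcal{G}_r(0)),$$
so that hypothesis \eqref{cnddisclarg} immediately forces $V^\star(\tilde\delta(r),\by,r)\to\infty$ as $|\by|\to\infty$ for every $r\in(0,r_0)$.

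It remains to check that $\tilde\gamma(r)=\gamma(r)/(1+\gamma(r))$ still satisfies the growth conditions \eqref{cndtildgam1}. Clearly $\tilde\gamma(r)\in(0,1/2)\subset(0,1)$; and since $\gamma(r)<1$ one has $\tilde\gamma(r)\ge \gamma(r)/2$, which yields $\limsup_{r\downarrow 0}r^{-2d/(d-2)}\tilde\gamma(r)=\infty$. Proposition \ref{thcondlebesrear}, applied with $\tilde\gamma(r)$ and $\tilde\delta(r)$ in place of $\gamma(r)$ and $\delta(r)$, then yields discreteness of the spectrum of $H$.

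The only delicate point is matching the constant $\sqrt{\gamma(r)}$ in the hypothesis exactly: the reparametrization $\tilde\gamma=\gamma/(1+\gamma)$ is exactly what does this, and it is also what explains why the sharper one-sided Cantelli bound (rather than the two-sided classical Chebyshev inequality, which would only give the weaker threshold $\mathrm{E}_{\by,r}(V)-(1-\tilde\gamma)^{-1/2}\,\mathrm{Dev}_{\by,r}(V)$) is the natural tool here. The assumption $V\in L_{2,\mathrm{loc}}(\R^d)$ ensures that $V\in L_2(\mathcal{G}_r(\by),m_{d,r})$ and hence that $\mathrm{Dev}_{\by,r}(V)$ is finite, so that Cantelli is legitimately applicable.
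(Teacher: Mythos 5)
Your proof is correct in substance. The reduction to Proposition~\ref{thcondlebesrear} via the one-sided (Cantelli) form of Chebyshev's inequality is valid: Cantelli's inequality on the probability space $(\mathcal{G}_r(\by),\Sigma_L(\mathcal{G}_r(\by)),m_{d,r})$ with $\lambda=\sqrt{\gamma(r)}$ gives
$m_{d,r}\bigl(\{V>\mathrm{E}_{\by,r}(V)-\sqrt{\gamma(r)}\,\mathrm{Dev}_{\by,r}(V)\}\bigr)\ge\gamma(r)/(1+\gamma(r))=:\tilde\gamma(r)$,
which (using $\mathrm{mes}_d(\mathcal{G}_r(\by))=\mathrm{mes}_d(\mathcal{G}_r(0))$ and $\{V>c\}\subseteq\{V\ge c\}$) translates through \eqref{dfSVyrdel}--\eqref{dfLVsry} into $V^\star(\tilde\delta(r),\by,r)\ge\mathrm{E}_{\by,r}(V)-\sqrt{\gamma(r)}\,\mathrm{Dev}_{\by,r}(V)$ once that right-hand side is positive, which it is for $|\by|$ large by \eqref{cnddisclarg}. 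Your verification that $\tilde\gamma(r)\ge\gamma(r)/2$ preserves the growth condition \eqref{cndtildgam1} is also correct, so Proposition~\ref{thcondlebesrear} applies. One small point worth making explicit: Cantelli's inequality in the form you cite presupposes $\mathrm{Dev}_{\by,r}(V)>0$; when $\mathrm{Dev}_{\by,r}(V)=0$ the function $V$ is $m_{d,r}$-a.e.\ constant on $\mathcal{G}_r(\by)$, and then $\lambda^\star(\mathrm{E}_{\by,r}(V),V,\mathcal{G}_r(\by))=\mathrm{mes}_d(\mathcal{G}_r(\by))\ge\tilde\delta(r)$ trivially, so the key pointwise estimate still holds; this degenerate case is harmless but should be flagged.

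As for the comparison: the present paper does not reprove Proposition~\ref{thdiscLagr}, it cites it from \cite{Zel2} (whose title refers to ``Lagrangian relaxation''), and states that it is obtained there ``on the base of Proposition~\ref{thcondlebesgue}'', i.e.\ by estimating the constrained quantity $\inf_{F\in\mathcal{M}_{\gamma(r)}(\by,r)}\int_{\mathcal{G}_r(\by)\setminus F}V$ directly — most plausibly by a Cauchy--Schwarz/Lagrange-multiplier argument on the optimization over $F$. You instead route through the rearrangement result Proposition~\ref{thcondlebesrear} and replace the optimization step by the tight one-sided Chebyshev bound; this buys a shorter, essentially probability-theoretic proof, and your remark that the two-sided Chebyshev inequality would never recover the exact coefficient $\sqrt{\gamma(r)}$ (it would force $\gamma(r)\ge1$) correctly identifies why the Cantelli form is the right tool. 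Both routes descend from the same measure-constrained estimate, so this is a genuine but modest variation rather than a new proof idea; still, yours is cleaner and self-contained given Proposition~\ref{thcondlebesrear}.
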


We will use also the following claim:

\begin{proposition}\cite[Proposition B.1]{Zel2}\label{lmest}
	Suppose that $d>2$ and  $\Omega$ is a bounded  domain in $\R^d$.
	
	\noindent	(i) If $W\in L_{d/2}(\Omega)$, then
	for any $u\in C_0^\infty(\Omega)$
	\begin{equation}\label{Sobineq}
	|\int_\Omega W(\e)|u(\e)|^2\, \mathrm{d}\e|\le C^2(d)\Vert
	W\Vert_{d/2}\int_\Omega|\nabla u(\e)|^2\, \mathrm{d}\e,
	\end{equation}
	where
	\begin{equation}\label{dfC}
	C(d)=\sqrt{\frac{1}{\pi
			d(d-2)}}\Big(\frac{\Gamma(d)}{\Gamma(d/2)}\Big)^{1/d};
	\end{equation}
	\vskip2mm
	
	\noindent	(ii)  Suppose that there is a vector field
	$\vec F(\e)$ on $\Omega$ belonging to $L_d(\Omega)$ and
	satisfying divergence equation \eqref{divereq} in the sense of distributions, where
	$W\in L_1(\Omega)$. Then for any $u\in C_0^\infty(\Omega)$
	\begin{equation}\label{estintlor1}
	|\int_{\Omega} W(\e)|u(\e)|^2\,\mathrm{d}\e|\le 2\,C(d)\,\Vert
	\vec F\Vert_d\int_{\Omega}|\nabla u(\e)|^2\, \mathrm{d}\e;
	\end{equation}
\end{proposition}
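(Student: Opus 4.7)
The plan is to derive part (i) from the sharp Sobolev embedding $\dot H^1(\R^d) \hookrightarrow L^{2d/(d-2)}(\R^d)$ combined with Hölder's inequality, and then bootstrap (ii) from (i) by integrating by parts against $\vec F$ and recognizing $|\vec F|^2$ as a potential to which (i) applies.

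For part (i), the idea is straightforward: extend $u$ by zero so that $u\in C_0^\infty(\R^d)$. Apply Hölder's inequality with conjugate exponents $d/2$ and $d/(d-2)$ to the product $W\cdot|u|^2$:
\begin{equation*}
\Bigl|\int_\Omega W(\e)|u(\e)|^2\,d\e\Bigr|\le \|W\|_{d/2}\,\bigl\||u|^2\bigr\|_{d/(d-2)}=\|W\|_{d/2}\,\|u\|_{2d/(d-2)}^{2}.
\end{equation*}
Then invoke the sharp Sobolev (Talenti--Aubin) inequality, which says $\|u\|_{2d/(d-2)}\le C(d)\,\|\nabla u\|_2$ with exactly the constant $C(d)$ given in \eqref{dfC}. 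Squaring and combining yields \eqref{Sobineq}.

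For part (ii), I would integrate by parts. Since $u\in C_0^\infty(\Omega)$ and $\mathrm{div}\,\vec F=W$ in the distributional sense, with $\vec F\in L_d(\Omega)\subset L_1(\Omega)$ and $|u|^2\in C_0^\infty(\Omega)$, the pairing is justified and gives
\begin{equation*}
\int_\Omega W(\e)|u(\e)|^2\,d\e=-\int_\Omega \vec F(\e)\cdot\nabla\bigl(|u(\e)|^2\bigr)\,d\e=-2\,\mathrm{Re}\int_\Omega \vec F(\e)\cdot\overline{u(\e)}\,\nabla u(\e)\,d\e.
\end{equation*}
Estimate the modulus by Cauchy--Schwarz to separate $|u|$ from $|\nabla u|$:
\begin{equation*}
\Bigl|\int_\Omega W|u|^2\,d\e\Bigr|\le 2\Bigl(\int_\Omega |\vec F|^2|u|^2\,d\e\Bigr)^{1/2}\Bigl(\int_\Omega|\nabla u|^2\,d\e\Bigr)^{1/2}.
\end{equation*}
Now reduce to (i): the scalar weight $|\vec F|^2$ lies in $L_{d/2}(\Omega)$ with $\bigl\||\vec F|^2\bigr\|_{d/2}=\|\vec F\|_d^{2}$, so applying \eqref{Sobineq} to it bounds the first factor by $C(d)\,\|\vec F\|_d\,\|\nabla u\|_2$. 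Multiplying, I get exactly \eqref{estintlor1}.

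The only non-routine ingredient is the explicit identification of $C(d)$ as the sharp Sobolev constant; once that is quoted from Talenti/Aubin, the rest is arithmetic. A minor care point is the distributional justification of the integration by parts when $\vec F$ is merely $L_d$ (not smooth): this is handled by approximating $\vec F$ by smooth vector fields whose divergences converge to $W$ in $L_1$, using that $|u|^2\in C_0^\infty(\Omega)$ to pass to the limit in the pairing $\langle \mathrm{div}\,\vec F,|u|^2\rangle=-\langle \vec F,\nabla|u|^2\rangle$. I expect this technical approximation to be the main obstacle, though it is entirely standard once one fixes the meaning of $\mathcal{D}_s(W,\Omega)$ as in \cite[Definitions A.2, A.3]{Zel2}.
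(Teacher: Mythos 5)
Your argument is correct, and it is almost certainly the same route as the cited [Zel2, Proposition B.1]: part (i) is Hölder with conjugate exponents $d/2$ and $d/(d-2)$ followed by the sharp Talenti--Aubin Sobolev inequality $\|u\|_{2d/(d-2)}\le C(d)\|\nabla u\|_2$ whose optimal constant is exactly the $C(d)$ of \eqref{dfC}, and part (ii) is the distributional pairing with $|u|^2$, Cauchy--Schwarz to peel off $|\nabla u|$, and then (i) applied to the weight $|\vec F|^2\in L_{d/2}$ with $\||\vec F|^2\|_{d/2}=\|\vec F\|_d^2$. One small simplification worth noting: the ``technical approximation'' you worry about at the end is not needed, since $|u|^2\in C_0^\infty(\Omega)$ is itself a test function and the identity $\int_\Omega W|u|^2\,\mathrm{d}\e=-\int_\Omega \vec F\cdot\nabla(|u|^2)\,\mathrm{d}\e$ is precisely the definition of $\mathrm{div}\,\vec F=W$ in the sense of distributions, valid verbatim for $\vec F\in L_1(\Omega)$; no regularization of $\vec F$ is required.
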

		
	\section{Main results} \label{sec:meinres}
	\setcounter{equation}{0}
	
	\subsection{Necessary conditions}\label{subsec.nesscond}
	
	Let us introduce some notation.  Consider  the
	spherical layer, centered at $\by\in\R^d$ $\;
	L_{s,t}(\by)=B_t(\by)\setminus\bar B_s(\by)\;(0< s<t)$ and the triangle
	in plane $\R^2:\,$ $\Delta_r=\{\langle s,t\rangle\in\R^2:\,0< s<t<r\}$.
Consider also the families of functions
	\begin{equation*}\label{dfcalW}
	{\mathcal W}_\by(t)=\int_{ B_{t}(\by)}V(\e)\,
	\mathrm{d}\e,
	\end{equation*}
	\begin{equation*}\label{dfcalV}
	{\mathcal V}_\by(s,t)=\mathcal{W_\by}(t)-\mathcal{W_\by}(s)=\int_{ L_{s,t}(\by)}V(\e)\,
	\mathrm{d}\e,
	\end{equation*}
	 
	The following necessary conditions for discreteness and boundedness below of the spectrum of $H$ are valid:
	\begin{theorem}\label{thnesscond1}
		Suppose that the potential $V(\e)$ is locally H\"older continuous
		and the spectrum of the operator $H$ is discrete and bounded below
		Then for any $r>0$ 
		\vskip2mm
	(i)	the family of functions $\mathcal{W_\by}(t)$
		tends to infinity in the measure $\mathrm{mes}_1$ on interval $(0,r)$ as
		$|\by|\rightarrow\infty$, i.e. 
		\begin{equation}\label{mesconvinfty1}
		\forall\,A>0\,:\quad\lim_{|\by|\rightarrow\infty}\mathrm{mes}_{\,1\,}\big(\{t\in(0,\,r):\;{\mathcal W}_\by(t)\le
		A\}\big)=0;
		\end{equation}
		\vskip2mm
		(ii) the family of functions $\mathcal{V_\by}(s,t)$
		tends to infinity in the measure $\mathrm{mes}_2$ on triangle $\Delta_r$ as
		$|\by|\rightarrow\infty$, i.e.
		\begin{equation}\label{mesconvinfty}
		\forall\,A>0\,:\quad\lim_{|\by|\rightarrow\infty}\mathrm{mes}_{\,2\,}\big(\{\langle s,t\rangle\in\Delta_r:\;{\mathcal V}_\by(s,t)\le
		A\}\big)=0
		\end{equation}
	\end{theorem}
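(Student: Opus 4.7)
The plan is to argue by contradiction via the min-max principle. I focus on part (ii); part (i) is analogous using initial intervals $(0,t)$ in place of sub-intervals $(s,t)\subset(0,r)$. Suppose \eqref{mesconvinfty} fails: there exist $A>0$, $\varepsilon>0$ and a sequence $\by_n$ with $|\by_n|\to\infty$ and mutually disjoint balls $B_r(\by_n)$ such that
\begin{equation*}
\mathrm{mes}_2\big(\{\langle s,t\rangle\in\Delta_r:\mathcal{V}_{\by_n}(s,t)\le A\}\big)\ge\varepsilon
\end{equation*}
for every $n$. The goal is to construct test functions $u_n\in C_0^\infty(B_r(\by_n))$ whose Rayleigh quotients $(Hu_n,u_n)/\Vert u_n\Vert^2$ are uniformly bounded above by some $A'$. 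Since the $u_n$ have disjoint supports they are pairwise orthogonal in $L_2(\R^d)$, and min-max together with semi-boundedness then contradicts discreteness of the spectrum above $A'$.

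The first step is the localization principle (Proposition \ref{prlocRicc}), which allows the construction to be carried out with \emph{radial} trial functions $u_n(\e)=\phi_n(|\e-\by_n|)$, $\phi_n\in C_0^\infty((0,r))$. Passing to spherical coordinates about $\by_n$ and using the Liouville-type substitution $\psi(\tau)=\tau^{(d-1)/2}\phi(\tau)$, the $d$-dimensional Rayleigh quotient becomes a one-dimensional expression
\begin{equation*}
\mathcal{R}[\psi]=\frac{\int_0^r |\psi'(\tau)|^2\,\mathrm{d}\tau+\int_0^r q_{\by_n}(\tau)|\psi(\tau)|^2\,\mathrm{d}\tau}{\int_0^r |\psi(\tau)|^2\,\mathrm{d}\tau},
\end{equation*}
where, modulo a $\tau^{-2}$ curvature correction that is harmless for fixed $r$ on the scale of $A$, the primitive $\int_s^t q_{\by_n}(\tau)\,\mathrm{d}\tau$ is proportional to the spherical-layer integral $\mathcal{V}_{\by_n}(s,t)$. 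Local H\"older continuity of $V$ supplies the regularity of $q_{\by_n}$ needed for the Riccati ODE machinery.

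Next I invoke the integral inequalities of Riccati type collected in Section \ref{sec:intineqRicc} (specifically Lemma \ref{lmintineq}). Contrapositively, these say that if the measure of the level set $\{\langle s,t\rangle\in\Delta_r:\int_s^t q_{\by_n}(\tau)\,\mathrm{d}\tau\le A\}$ is bounded below by $\varepsilon$, then there exists a 1D trial function $\psi_n\in C_0^\infty((0,r))$ with $\mathcal{R}[\psi_n]\le A'$, where $A'$ depends only on $A,\varepsilon,r$. The underlying mechanism, following \cite{Zel}, is the finite-``time'' blow-up of solutions of the scalar Riccati ODE $u'=u^2-q+\lambda$: large level sets preclude the existence of a global solution, which produces the required oscillating $\psi_n$. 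Pulling back by $\phi_n(\tau)=\tau^{-(d-1)/2}\psi_n(\tau)$ and extending radially yields the needed $u_n$.

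The main obstacle is ensuring that, after the reduction to one dimension, the primitive of the effective potential is controlled by the geometric quantity $\mathcal{V}_{\by_n}$ itself rather than by some $\by_n$-dependent perturbation of it; otherwise the 1D measure bound would not translate into the desired statement about $\mathcal{V}_{\by_n}$. This is precisely what the Riccati-PDE-based localization of Proposition \ref{prlocRicc} is tailored to accomplish, so once that principle is established the remainder is a careful packaging of the 1D integral inequalities from \cite{Zel} recorded in Section \ref{sec:intineqRicc}.
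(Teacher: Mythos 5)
Your proposal inverts the logical direction of the paper's argument, and this inversion requires a converse-type statement that the cited tools do not supply. The paper does \emph{not} argue by contradiction via trial functions. Instead it uses Proposition \ref{prlocRicc} in the forward direction: discreteness plus boundedness below guarantee, for each $r>0$, a function $\lambda(\by)\to+\infty$ and a classical solution $v^{(\by)}$ of the Riccati PDE \eqref{Ricceq1} on $\bar B_r(\by)$. From that actual solution one defines the flux function $w^{(\by)}(t)=\int_{\partial B_t(\by)}\partial_{\vec n}v^{(\by)}\,ds$, combines Green's formula with Schwartz's inequality, and arrives at a one-dimensional Riccati-type \emph{integral inequality} satisfied by $u^{(\by)}=w^{(\by)}/(\omega_d r^{d-1})$ on sublevel sets of $\mathcal{W}_{\by}$ or $\mathcal{V}_{\by}$. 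Lemma \ref{lmintineq} and Proposition \ref{mainprAp2} are then applied in their stated (forward) form to conclude that those sublevel sets shrink. You instead want a converse: that a lower bound on the measure of the sublevel set produces a radial trial function with a bounded Rayleigh quotient. No such converse is contained in Lemma \ref{lmintineq}; its contrapositive merely says that either $\lambda_n\not\to\infty$ or the inequality fails for some $n$, which does not hand you a test function $\psi_n$ with $\mathcal{R}[\psi_n]\le A'$.

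Two more specific points. First, you read Proposition \ref{prlocRicc} as ``allowing the construction to be carried out with radial trial functions,'' but that proposition says nothing of the kind: it characterizes discreteness via the solvability of a PDE, and in the paper its role is to \emph{produce} the function $v^{(\by)}$ whose radial flux is then analyzed. Restricting the Rayleigh quotient to radial trial functions is a separate and generally lossy step (the infimum over radial functions need not equal $\lambda_0(\by,r)$ for non-radial $V$), so it cannot be invoked for free. Second, your claim that after the Liouville substitution the primitive of the effective 1D potential ``is proportional to $\mathcal{V}_{\by_n}(s,t)$'' is false without extra care: writing $\bar V_{\by}(\tau)$ for the spherical average, one has $\mathcal{V}_{\by}(s,t)=\int_s^t\bar V_{\by}(\tau)\,\tau^{d-1}\,d\tau$, whereas the 1D primitive is $\int_s^t\bar V_{\by}(\tau)\,d\tau$; the weight $\tau^{d-1}$ is not a ``harmless curvature correction.'' The paper avoids this precisely by working on the truncated triangle $\Delta_r^\delta$ and using the lower bounds $\mathrm{mes}_d(B_t(0))\ge t\,\omega_d\delta^{d-1}/d$ and $\mathrm{mes}_d(L_{s,t}(0))\ge\omega_d\delta^{d-1}(t-s)$ to absorb the weight into a modified constant $\tilde\lambda^\delta_A(\by)$, which still tends to $+\infty$. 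As written, your proposal does not reach a valid proof; it would need either a genuine converse to Lemma \ref{lmintineq} or the paper's direct route through the Riccati PDE solution and its surface flux.
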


\begin{remark}\label{remcompar}
In the case where $V(\e)\ge 0$ each function $\mathcal{W_\by}(t)$ is monotone non-decreasing. Using this fact, it is easy to show that conclusions (i) and (ii) of Theorem \ref{thnesscond1} are  equivalent to each other and to condition \eqref{intVtendinf} with $\mathcal{G}_r(\by)=B_r(\by)$. 
\end{remark}

\begin{remark}\label{remcompar1}
If it turned out that in the case, where $V(\e)$ is not supposed to be bounded below, one of conclusions of  Theorem \ref{thnesscond1} implies  another,  we could remove the latter  from  formulation of the theorem without weakening it. But Example \ref{ex1}  (Section \ref{sec:examples}) shows that   conclusion (ii) does not imply (i). On the other hand, so far we have not succeeded to fully substantiate an example which would show that (i) does not imply (ii) (Remark \ref{remtry}).
\end{remark}
	
\subsection{Sufficient conditions}\label{subsec.suffcond}

We need the following notion:
\begin{definition}\label{defrhocov}
A sequence of domains $\{\mathcal{G}_{r}(\by_k) \}_{k=1}^\infty$ forms $\rho$-covering  of $\R^d$ for some $\rho>0$, if

(a)
\begin{equation*}
|\by_k|\rightarrow\infty\quad \mathrm{for}\quad k\rightarrow\infty;
\end{equation*}
\vskip2mm
(b) for any $\by\in\R^d$ there is $j$ such that $B_\rho(\by)\subseteq\mathcal{G}_r(\by_j)$.
\end{definition}

In this section we will be based on following claim: 
\begin{proposition}\label{lmperturbdisc}
	Suppose that $\theta_1>0$, $\theta_2>0$, $\theta_1+\theta_2=1$ and in each domain $\mathcal{G}_{r_0}(\by)$
	the potential $V(\e)$ of the Schr\"odinger operator
	$H=-\Delta+V(\e)\cdot$ has the form $V(\e)=V_1^{(\by)}(\e)+V_2^{(\by)}(\e)$, where $V_k^{(\by)}\in L_{\infty}(\mathcal{G}_{r_0}(\by))\,(k\in\{1,2\})$. Consider the operators 
	$H_k^{(\by)}=-\Delta+\frac{1}{\theta_k}V_k^{(\by)}(\e)\cdot$ and the numbers 
	\begin{equation}\label{dflamky}
	\lambda_0^{(k)}(\by)=\inf_{u\in C_0^\infty(\mathcal{G}_{r_0}(\by),\,u\neq
		0}\frac{(H_k^{(\by)} u,u)_{\mathcal{G}_{r_0}(\by)}}{\Vert u\Vert_{\mathcal{G}_{r_0}(\by)}^2}
	\end{equation}
\vskip2mm	
	\noindent(i)  If 
	\begin{equation}\label{lamkyboundbel1}
	\liminf_{|\by|\rightarrow\infty}\lambda_0^{(k)}(\by)>-\infty\quad ,(k\in\{1,2\}), 
	\end{equation}
	 then the operator $H$ is bounded below;
	\vskip2mm
	\noindent(ii)  If 
	\begin{equation*}\label{lamkyboundbel}
	\lim_{|\by|\rightarrow\infty}\lambda_0^{(1)}(\by)=+\infty,
	\end{equation*}
	and \eqref{lamkyboundbel1} is fulfilled for $k=2$, then the spectrum of $H$ is discrete and bounded below.
\vskip2mm
\noindent(iii)  If a sequence of domains $\{\mathcal{G}_{r_0}(\by_k) \}_{k=1}^\infty$ forms $\rho$-covering  of $\R^d$, then claims (i)	 and (ii) are valid with $\by_k$ instead of $\by$.
	
\end{proposition}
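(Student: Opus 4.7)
The plan is to combine the classical IMS (Ismagilov--Morgan--Simon) localization formula with the Rayleigh-quotient bounds \eqref{dflamky} applied on each piece of a smooth partition of unity subordinate to a $\rho$-covering. Since (iii) is exactly what the argument below establishes, I would first deduce (i) and (ii) from (iii) by extracting from the continuous family $\{\mathcal{G}_{r_0}(\by)\}_{\by\in\R^d}$ a sequence $\{\by_j\}$ with $|\by_j|\to\infty$ that forms a $\rho$-covering, with finitely many early-index $\by_j$ chosen to cover any prescribed bounded region of $\R^d$. The $L_\infty$-boundedness of each $V_k^{(\by)}$ guarantees $\lambda_0^{(k)}(\by_j)>-\infty$ for every $j$, so the finitely many initial indices contribute only a bounded shift to any subsequent uniform lower bound.

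Given a $\rho$-covering $\{\mathcal{G}_{r_0}(\by_j)\}$, I would construct a smooth partition of unity $\{\chi_j^2\}$ with $\chi_j\in C_0^\infty(\mathcal{G}_{r_0}(\by_j))$, $\sum_j \chi_j^2\equiv 1$ on $\R^d$, uniformly bounded overlap multiplicity, and $K(\e):=\sum_j|\nabla\chi_j(\e)|^2\le K_0<\infty$. For $u\in C_0^\infty(\R^d)$, expanding $\sum_j|\nabla(\chi_j u)|^2$ and using $\sum_j 2\chi_j\nabla\chi_j=\nabla(\sum_j\chi_j^2)=0$ to kill the cross-term yields the IMS identity
\begin{equation*}
(Hu,u)_{\R^d}=\sum_j\big(H(\chi_j u),\chi_j u\big)_{\mathcal{G}_{r_0}(\by_j)}-\int_{\R^d}K(\e)|u(\e)|^2\,\mathrm{d}\e.
\end{equation*}
On each $\mathcal{G}_{r_0}(\by_j)$, using $\theta_1+\theta_2=1$ and the local decomposition $V=V_1^{(\by_j)}+V_2^{(\by_j)}$, the identity $H=\theta_1 H_1^{(\by_j)}+\theta_2 H_2^{(\by_j)}$ splits the local form as $\theta_1(H_1^{(\by_j)}(\chi_j u),\chi_j u)+\theta_2(H_2^{(\by_j)}(\chi_j u),\chi_j u)$, and applying \eqref{dflamky} to each summand produces $(H(\chi_j u),\chi_j u)\ge \mu_j\|\chi_j u\|^2$ with $\mu_j:=\theta_1\lambda_0^{(1)}(\by_j)+\theta_2\lambda_0^{(2)}(\by_j)$. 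Summing over $j$ and using $\sum_j\chi_j^2=1$ gives the master estimate
\begin{equation*}
(Hu,u)_{\R^d}\ge\int_{\R^d}\big(\mu(\e)-K(\e)\big)|u(\e)|^2\,\mathrm{d}\e,\qquad \mu(\e):=\sum_j\mu_j\chi_j^2(\e).
\end{equation*}

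The three claims now follow quickly. For (i), the liminf hypotheses combined with finiteness of the finitely many initial $\mu_j$ yield $\mu(\e)-K(\e)\ge -C$ on $\R^d$, so $H$ is semi-bounded. For (ii), since $\mu_j\to+\infty$ as $j\to\infty$, one has $\mu(\e)-K(\e)\to+\infty$ as $|\e|\to\infty$; the form inequality together with the min-max principle gives $\lambda_n(H)\ge\lambda_n\big(-\Delta+(\mu-K)\big)$, and the right-hand operator has a bounded-below, locally bounded, confining potential and hence compact resolvent, so $\sigma(H)$ is discrete and bounded below. Claim (iii) is precisely what the argument above establishes.

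The main technical obstacle is the construction of the partition of unity $\{\chi_j\}$: since the $\mathcal{G}_{r_0}(\by_j)$ are general Lipschitz star-shaped domains, not balls, one cannot simply translate a single reference bump. However, the defining property $B_\rho(\by)\subseteq\mathcal{G}_{r_0}(\by_j)$ for every $\by\in\R^d$ permits a standard construction: form smooth bumps $\psi_j$ adapted to $\rho$-balls inside each $\mathcal{G}_{r_0}(\by_j)$ so that $\sum_k\psi_k^2>0$ everywhere, thin the covering by a Besicovitch-type selection to enforce bounded multiplicity, and renormalize $\chi_j=\psi_j/\sqrt{\sum_k\psi_k^2}$; the uniform length scale $\rho$ then gives the uniform gradient bound $\|\nabla\chi_j\|_\infty\le C(\rho)$ needed to control $K$. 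A secondary, routine step is the passage from the continuous hypotheses in (i)--(ii) to the along-sequence conditions in (iii), handled by the finiteness argument already indicated.
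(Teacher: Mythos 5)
Your route is genuinely different from the paper's: rather than invoking the Kondratiev--Shubin localization principle (Proposition \ref{prloc}) as a black box, you re-derive the needed coercivity from scratch via the IMS partition-of-unity identity. The paper's proof is shorter because it uses only the elementary pointwise inequality
\[
\lambda_0(\mathcal{G}_{r_0}(\by))\;\ge\;\theta_1\lambda_0^{(1)}(\by)+\theta_2\lambda_0^{(2)}(\by),
\]
applies monotonicity of $\lambda_0$ under domain inclusion to pass to balls $B_{r_0\rho}(\by)\subset\mathcal{G}_{r_0}(\by)$, and then feeds the result directly into Proposition \ref{prloc}; claim (iii) is handled the same way, using the $\rho$-covering inclusion $B_\rho(\by)\subseteq\mathcal{G}_{r_0}(\by_j)$. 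Your approach is more self-contained and closer to first principles, but it imports the technical burden of proving (or at least correctly formulating) the passage from a good far-field form estimate to emptiness of the essential spectrum, which is precisely what the localization principle packages.

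There is a concrete gap in your treatment of claim (ii). Your master estimate
\[
(Hu,u)_{\R^d}\;\ge\;\int_{\R^d}\big(\mu(\e)-K(\e)\big)\,|u(\e)|^2\,\mathrm{d}\e
\]
is correct, but it dominates $H$ below by the \emph{multiplication} operator $(\mu-K)\cdot$ only: the kinetic term on the left was entirely absorbed into the local eigenvalue bounds $\lambda_0^{(k)}(\by_j)$, and there is no way to re-extract a $-\Delta$ on the right. Consequently, the asserted inequality $\lambda_n(H)\ge\lambda_n\big(-\Delta+(\mu-K)\big)$ does not follow from the form estimate; what min--max actually gives is $\lambda_n(H)\ge\lambda_n\big((\mu-K)\cdot\big)=\inf(\mu-K)$ for every $n$, which proves semi-boundedness but says nothing about discreteness. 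A confining potential as a multiplication operator on $L_2(\R^d)$ has purely essential spectrum, so the intended conclusion about compact resolvent is unavailable along this line. The correct way to close the argument from your master estimate is to invoke Persson's characterization of the bottom of the essential spectrum, $\inf\sigma_{ess}(H)=\lim_{R\to\infty}\inf\{(Hu,u):u\in C_0^\infty(\R^d\setminus\bar B_R(0)),\ \Vert u\Vert=1\}$, together with $\mu-K\to+\infty$ at infinity (or, equivalently, a relatively-compact-perturbation argument with a compactly supported cutoff). As written, the min--max step in (ii) is a non sequitur. The remainder of the proposal --- the IMS identity, the use of $\theta_1 H_1^{(\by_j)}+\theta_2 H_2^{(\by_j)}=H$ to bound the local quadratic forms, the treatment of (i), the reduction of (i)--(ii) to (iii) via a selected $\rho$-covering, and the partition-of-unity construction --- is sound.
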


	Let $V_+(\e)$ and $V_-(\e)$ be the positive and, respectively,
negative parts of the potential $V(\e)$, i.e.,
\begin{equation}\label{pospart}
V_+(\e)=\max\{V(\e),\,0\},\quad V_-(\e)=\max\{-V(\e),\,0\}.
\end{equation}
Denote
\begin{equation}\label{dfKd}
K(d)=\frac{1}{C^2(d)},
\end{equation}
where $C(d)$ is expressed by (\ref{dfC}).
The following claim is  valid:

\begin{theorem}\label{thgenermolch1}
	Suppose that for some $r_0>0$ 
	\begin{equation}\label{condbndblc}
	\limsup_{|\by|\rightarrow\infty}\Big(\int_{\mathcal{G}_{r_0}(\by)}(V_-(\e))^{d/2},
	\mathrm{d}\e\Big)^{2/d}<K(d),
	\end{equation}
	Then
		\vskip2mm
	\noindent$(i)$ operator $H$ is
	bounded below;;
	\vskip2mm
	
	\noindent$(ii)$ the spectrum of operator $H$ is discrete, if for $V_+(\e)$ conditions of one of four claims
	are fulfilled: either  Theorem \ref{thMazSh}, or  Proposition \ref{thcondlebesgue}, or  Proposition \ref{thcondlebesrear}, or  Proposition \ref{thdiscLagr}.
\end{theorem}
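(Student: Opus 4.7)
The plan is to apply the perturbation--localization result Proposition \ref{lmperturbdisc}, combined with the Sobolev-type estimate Proposition \ref{lmest}(i), to the decomposition $V = V_+ - V_-$ on each domain $\mathcal{G}_{r_0}(\by)$. Concretely, fix $\theta_2 \in (0,1)$ close to $1$ (to be chosen below) and set $\theta_1 = 1 - \theta_2 > 0$; define $V_1^{(\by)} := V_+$ and $V_2^{(\by)} := -V_-$. Then $V_1^{(\by)} + V_2^{(\by)} = V$ and both components belong to $L_\infty(\mathcal{G}_{r_0}(\by))$ by the standing hypothesis $V \in L_{\infty,\mathrm{loc}}(\R^d)$. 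The operators in Proposition \ref{lmperturbdisc} become $H_1^{(\by)} = -\Delta + V_+/\theta_1$ and $H_2^{(\by)} = -\Delta - V_-/\theta_2$.

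For $H_2^{(\by)}$, inequality \eqref{Sobineq} applied to $W = V_-$ yields, for every $u \in C_0^\infty(\mathcal{G}_{r_0}(\by))$,
\[
(H_2^{(\by)} u, u)_{\mathcal{G}_{r_0}(\by)} \ge \Bigl(1 - \theta_2^{-1} C^2(d)\, \Vert V_-\Vert_{L^{d/2}(\mathcal{G}_{r_0}(\by))}\Bigr)\, \Vert\nabla u\Vert^2_{\mathcal{G}_{r_0}(\by)}.
\]
Hypothesis \eqref{condbndblc} together with the identity $C^2(d) = 1/K(d)$ allows us to pick $\theta_2 < 1$ so that the bracketed factor stays nonnegative for all sufficiently large $|\by|$; consequently $\liminf_{|\by|\to\infty} \lambda_0^{(2)}(\by) \ge 0$. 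On the other hand $V_+/\theta_1 \ge 0$, so $\lambda_0^{(1)}(\by) \ge 0$ holds trivially. Part (i) therefore follows from Proposition \ref{lmperturbdisc}(i), applied along any $\rho$-covering $\{\mathcal{G}_{r_0}(\by_k)\}$ via its part (iii).

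For (ii), observe that each of the four sufficient discreteness conditions is preserved under the positive rescaling $V_+ \mapsto V_+/\theta_1$ (each is homogeneous of positive degree in the potential). Each such condition is either the Mazya--Shubin criterion itself or a stronger condition implying it, and the underlying Mazya--Shubin argument actually produces the quantitative localized statement $\lambda_0^{(1)}(\by) \to +\infty$ as $|\by| \to \infty$: this is precisely the content of the localization principle (Proposition \ref{prloc}), which equates spectral discreteness of $-\Delta + U$ for nonnegative $U$ with divergence of the ground-state Dirichlet energies on a $\rho$-covering. Combined with the uniform lower bound for $\lambda_0^{(2)}(\by)$ from the preceding paragraph, Proposition \ref{lmperturbdisc}(ii)--(iii) delivers discreteness and boundedness below of the spectrum of $H$.

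The main technical hurdle is the passage in the third paragraph: to extract from the literal conclusion ``the spectrum of $-\Delta + V_+$ is discrete'' (which is what the four cited propositions state) the stronger quantitative statement $\lambda_0^{(1)}(\by_k) \to +\infty$ that Proposition \ref{lmperturbdisc}(ii) requires. This will be handled by invoking the localization principle Proposition \ref{prloc}, which realizes exactly this equivalence for nonnegative potentials. Everything else is a routine bookkeeping of the Sobolev bound and the choice of the splitting parameter $\theta_2$.
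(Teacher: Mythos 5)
Your proof is correct and follows essentially the same route as the paper: decompose $V=V_+-V_-$ with convex weights $\theta_1,\theta_2$, control the $V_-$ piece with the Sobolev estimate of Proposition \ref{lmest}(i) after choosing $\theta_2$ close to $1$ so that $\bar L<\theta_2 K(d)$, and feed everything into Proposition \ref{lmperturbdisc} together with Proposition \ref{prloc}. The only cosmetic difference is that the paper makes the scaling remark ($V_+\mapsto V_+/\theta_1$ preserves each of the four discreteness hypotheses) an explicit display at the end, whereas you fold it into a sentence; the bridge from ``spectrum of $H_+$ is discrete'' to ``$\lambda_0^{(1)}(\by_k)\to+\infty$'' via Proposition \ref{prloc}(ii) that you flag as the technical hurdle is exactly the step the paper also invokes.
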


\begin{remark}\label{remBenFort}
	In \cite[Theorem 2.2 ]{Ben-Fort1} the semi-boundedness of $H$ is guarantied by the condition $V_-\in L_{d/2}(\R^d)$, which obviously implies  condition \eqref{condbndblc}.
\end{remark}

The following claims are based on Theorem \ref{thgenermolch1}:
\begin{corollary}\label{corNtendinfty}
	Suppose that in the formulation of Theorem \ref{thgenermolch1}
	instead of \eqref{condbndblc} the following condition is fulfilled : for any $\by\in\R^d$ $\;V_-\in L_{d/2}(\mathcal{G}_{r_0}(\by))$ and the integrals
	\begin{equation*}
	I_{\by}(N)=\int_{\mathcal{G}_{r_0}(\by) }(V(\e)+N)_-^{d/2}\,\mathrm{d}\e
	\end{equation*}	
	tend to zero as $N\rightarrow\infty$ uniformly with respect to $\by\in\R^d$. Then the spectrum of operator $H$ is discrete and bounded below.
 \end{corollary}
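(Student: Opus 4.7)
The plan is to reduce Corollary \ref{corNtendinfty} to Theorem \ref{thgenermolch1} via a constant shift of the potential. Specifically, I would choose a sufficiently large $N_0 > 0$, set $\tilde V(\e) = V(\e) + N_0$, and consider the shifted operator $\tilde H = -\Delta + \tilde V(\e)\cdot = H + N_0$. Since $H$ and $\tilde H$ differ by a multiple of the identity, $H$ is bounded below (resp.\ has discrete spectrum) if and only if $\tilde H$ is, so it suffices to verify the hypotheses of Theorem \ref{thgenermolch1}(ii) for $\tilde V$. The uniform convergence assumption lets me pick $N_0$ so large that $I_{\by}(N_0) < (K(d))^{d/2}$ for every $\by \in \R^d$. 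Since the negative part of the shifted potential equals $\tilde V_-(\e) = (V(\e) + N_0)_-$, this choice gives
\[
\Bigl(\int_{\mathcal{G}_{r_0}(\by)} \tilde V_-(\e)^{d/2}\, \mathrm{d}\e\Bigr)^{2/d} = I_{\by}(N_0)^{2/d} < K(d),
\]
uniformly in $\by$, which is stronger than condition \eqref{condbndblc} applied to $\tilde V$.

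Next, I would check that whichever of the four sufficient conditions is assumed for $V_+$ is inherited by $\tilde V_+$. A direct case analysis on the signs of $V(\e)+N_0$ and $V(\e)$ shows $\tilde V_+ = V_+ + \delta$ with $0 \le \delta(\e) \le N_0$ everywhere, so in particular $\tilde V_+ \ge V_+$ pointwise. For condition \eqref{cndmolch1} of Theorem \ref{thMazSh} and condition \eqref{cndlebesgue} of Proposition \ref{thcondlebesgue} this monotonicity preserves the condition immediately, since for every admissible $F$ the integral $\int_{\mathcal{G}_r(\by)\setminus F}\tilde V_+\,\mathrm{d}\e$ only grows, hence so do the infima over $F$. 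For condition \eqref{cndlebesrear} of Proposition \ref{thcondlebesrear} the monotonicity of the non-increasing rearrangement ($W_1\ge W_2$ implies $W_1^\star\ge W_2^\star$ pointwise) does the job. The main subtlety is condition \eqref{cnddisclarg} of Proposition \ref{thdiscLagr}, because deviation is not monotone in its argument. Here I would combine the Minkowski inequality $\mathrm{Dev}_{\by,r}(V_+ + \delta) \le \mathrm{Dev}_{\by,r}(V_+) + \mathrm{Dev}_{\by,r}(\delta)$ with the crude bound $\mathrm{Dev}_{\by,r}(\delta) \le \sqrt{\mathrm{E}_{\by,r}(\delta^2)} \le N_0$ (coming from $0\le\delta\le N_0$), together with $\sqrt{\gamma(r)}\le 1$ and $\mathrm{E}_{\by,r}(\tilde V_+)\ge \mathrm{E}_{\by,r}(V_+)$, to obtain
\[
\mathrm{E}_{\by,r}(\tilde V_+) - \sqrt{\gamma(r)}\,\mathrm{Dev}_{\by,r}(\tilde V_+) \ge \bigl[\mathrm{E}_{\by,r}(V_+) - \sqrt{\gamma(r)}\,\mathrm{Dev}_{\by,r}(V_+)\bigr] - N_0,
\]
which still tends to $+\infty$ as $|\by|\to\infty$. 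Local square-integrability of $\tilde V_+$ follows from $V\in L_{\infty,loc}(\R^d)$ and the boundedness of $\delta$.

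With both hypotheses of Theorem \ref{thgenermolch1} verified for $\tilde V$, part (ii) of that theorem yields that $\tilde H$ has discrete spectrum bounded below, and consequently $H = \tilde H - N_0$ does as well. The main obstacle, as highlighted above, is handling the non-monotone deviation in \eqref{cnddisclarg}; the bound $\mathrm{Dev}_{\by,r}(\delta)\le N_0$ resolves it with a harmless bounded additive correction, while the remaining three conditions transfer by pure monotonicity in $V_+$.
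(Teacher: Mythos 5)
Your proof is correct and follows the same core strategy as the paper's: choose $N$ large enough (using uniform convergence) so that the shifted potential $V+N$ satisfies the semi-boundedness condition \eqref{condbndblc} of Theorem \ref{thgenermolch1}, then observe that $H$ and $H+NI$ have the same spectral properties up to translation. However, your proof is more careful than the paper's, which glosses over a genuine point: condition (ii) of Theorem \ref{thgenermolch1} is stated for $V_+$, while after shifting it must be verified for $(V+N)_+$, which is a different function. Your observation that $(V+N)_+ = V_+ + \delta$ with $0\le\delta\le N$ handles the three monotone conditions (Mazya--Shubin, Lebesgue-measure, rearrangement) immediately via $(V+N)_+\ge V_+$, and your Minkowski bound $\mathrm{Dev}_{\by,r}((V+N)_+)\le \mathrm{Dev}_{\by,r}(V_+)+N$ together with $\sqrt{\gamma(r)}\le 1$ correctly transfers the non-monotone Lagrangian-relaxation condition \eqref{cnddisclarg}. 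This explicit verification is exactly what the paper's terse proof leaves implicit, so your version is a strict improvement in rigor while remaining faithful to the paper's underlying reduction.
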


\begin{corollary}\label{thgenermolch}
	The condition of Corollary \ref{corNtendinfty} is fulfilled, if for some $r_0>0$ there is a nonnegative function $\psi\in
	L_{d/2}(\mathcal{G}_{r_0}(0))$  such that
\begin{equation}\label{cndbndbld2}
	\forall\,\by\in \R^d,\,\e\in \mathcal{G}_{r_0}(\by)\,:\quad
	V(\e)\ge-\psi(\e-\by).
	\end{equation}
\end{corollary}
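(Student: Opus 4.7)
The plan is to reduce the uniform bound required in Corollary \ref{corNtendinfty} to a single $\by$-independent integral by exploiting the translation-invariance of Lebesgue measure, and then to apply dominated convergence.

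First I would observe that from the hypothesis \eqref{cndbndbld2} we get, for every $\by\in\R^d$ and every $\e\in\mathcal{G}_{r_0}(\by)$,
\begin{equation*}
V(\e)+N \;\ge\; N-\psi(\e-\by),
\end{equation*}
so that $(V(\e)+N)_{-}\le (\psi(\e-\by)-N)_{+}$. Raising to power $d/2$ and integrating,
\begin{equation*}
I_\by(N)\;=\;\int_{\mathcal{G}_{r_0}(\by)}(V(\e)+N)_{-}^{d/2}\,\mathrm{d}\e
\;\le\;\int_{\mathcal{G}_{r_0}(\by)}\bigl(\psi(\e-\by)-N\bigr)_{+}^{d/2}\,\mathrm{d}\e.
\end{equation*}
Since $\mathcal{G}_{r_0}(\by)=\mathcal{G}_{r_0}(0)+\{\by\}$, the substitution $\bt=\e-\by$ gives
\begin{equation*}
I_\by(N)\;\le\;\int_{\mathcal{G}_{r_0}(0)}\bigl(\psi(\bt)-N\bigr)_{+}^{d/2}\,\mathrm{d}\bt,
\end{equation*}
and the right-hand side is independent of $\by$. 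This is the key step: the shift-covariance of the ambient Euclidean structure converts pointwise comparison with the translated profile $\psi(\cdot-\by)$ into a single bound.

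Next I would verify the two claims required by Corollary \ref{corNtendinfty}. For the integrability of $V_-$, note that \eqref{cndbndbld2} implies $V_-(\e)\le\psi(\e-\by)$ a.e.\ on $\mathcal{G}_{r_0}(\by)$, and since $\psi\in L_{d/2}(\mathcal{G}_{r_0}(0))$, translation gives $V_{-}\in L_{d/2}(\mathcal{G}_{r_0}(\by))$. For the uniform decay, observe that the integrand $(\psi(\bt)-N)_{+}^{d/2}$ is dominated by $\psi(\bt)^{d/2}\in L_{1}(\mathcal{G}_{r_0}(0))$ for all $N\ge 0$ (since $0\le (\psi-N)_{+}\le\psi$), and it converges to $0$ for every $\bt$ at which $\psi(\bt)<\infty$, i.e.\ almost everywhere. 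Lebesgue's dominated convergence theorem therefore yields
\begin{equation*}
\lim_{N\to\infty}\int_{\mathcal{G}_{r_0}(0)}(\psi(\bt)-N)_{+}^{d/2}\,\mathrm{d}\bt=0,
\end{equation*}
which together with the previous estimate gives $\sup_{\by\in\R^d}I_\by(N)\to 0$ as $N\to\infty$. Thus the hypothesis of Corollary \ref{corNtendinfty} is satisfied, and the conclusion follows.

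There is no real obstacle here: the whole argument rests on translation invariance plus dominated convergence applied to $(\psi-N)_{+}^{d/2}$. The only minor subtlety is making sure that the domination $V_{-}(\e)\le\psi(\e-\by)$ from \eqref{cndbndbld2} is used in the right place (to get the pointwise inequality on $(V+N)_{-}$) before changing variables.
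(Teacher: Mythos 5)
Your proof is correct and follows essentially the same route as the paper: derive the pointwise inequality $(V(\e)+N)_-\le(\psi(\e-\by)-N)_+$, change variables to make the bounding integral $\by$-independent, and conclude by dominated convergence; the paper simply states these steps more tersely. The only addition you make is explicitly checking $V_-\in L_{d/2}(\mathcal{G}_{r_0}(\by))$, which the paper leaves implicit.
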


For each $\by\in\R^d$ consider the function 
\begin{equation}\label{defWy}
W^{(\by)}(\e)=V(\e)-\mathrm{E_{\by,r_0}}(V),
\end{equation}
 defined on 
$\mathcal{G}_{r_0}(\by)$. The following claim takes into account interplay between positive and negative parts of the potential $V(\e)$:

\begin{theorem}\label{theorpert}
	Suppose that for some $r_0>0$ 
	\vskip2mm
	
	(a)  a sequence of domains $\{\mathcal{G}_{r_0}(\by_k) \}_{k=1}^\infty$ forms $\rho$-covering  of $\R^d$;
	\vskip2mm
	
	(b)
	
	\begin{equation}\label{cndpert}
	\bar D=\limsup_{k\rightarrow\infty}D_d(W^{(\by_k)},\by_k,r_0)<
	1/(2C(d)),
	\end{equation}
Then
\vskip2mm
\noindent$(i)$ If
\begin{equation}\label{cndsemboundEy}
\liminf_{k\rightarrow\infty}\mathrm{E_{\by_k,r_0}}(V)>-\infty,
\end{equation}
operator $H$ is bounded below;;
\vskip2mm
\noindent(ii)  If 
\begin{equation}\label{cndEyinfty}
\lim_{k\rightarrow\infty}\mathrm{E_{\by_k,r_0}}(V)=+\infty.
\end{equation}
the spectrum of $H$ is discrete and bounded below.
\end{theorem}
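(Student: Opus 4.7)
The plan is to apply Proposition \ref{lmperturbdisc}(iii) to the splitting $V(\e) = V_1^{(\by)}(\e) + V_2^{(\by)}(\e)$ in which $V_1^{(\by)}(\e) \equiv \mathrm{E}_{\by,r_0}(V)$ is the constant piece and $V_2^{(\by)}(\e) = W^{(\by)}(\e)$ is the oscillatory piece. Hypothesis (b), namely $2C(d)\bar D < 1$, is exactly what permits the choice of weights $\theta_1,\theta_2>0$ with $\theta_1+\theta_2=1$ and $\theta_2 \in (2C(d)\bar D,\,1)$; I fix such weights at the outset.

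For the constant-potential operator $H_1^{(\by_k)} = -\Delta + \theta_1^{-1}\mathrm{E}_{\by_k,r_0}(V)\cdot$ on $\mathcal{G}_{r_0}(\by_k)$, translation invariance reduces the Rayleigh quotient of the Dirichlet realization to that of $-\Delta$ on $\mathcal{G}_{r_0}(0)$ shifted by the constant, giving
\[
\lambda_0^{(1)}(\by_k) \;=\; \mu_0 \;+\; \theta_1^{-1}\mathrm{E}_{\by_k,r_0}(V),
\]
where $\mu_0>0$ is the first Dirichlet eigenvalue of $-\Delta$ on $\mathcal{G}_{r_0}(0)$, a fixed positive constant. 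Consequently, \eqref{cndsemboundEy} forces $\liminf_k\lambda_0^{(1)}(\by_k) > -\infty$, while \eqref{cndEyinfty} forces $\lim_k\lambda_0^{(1)}(\by_k) = +\infty$.

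For $H_2^{(\by_k)} = -\Delta + \theta_2^{-1}W^{(\by_k)}(\e)\cdot$ I would invoke Proposition \ref{lmest}(ii): for every $\vec F \in \mathcal{D}_d(W^{(\by_k)},\by_k,r_0)$ and every $u \in C_0^\infty(\mathcal{G}_{r_0}(\by_k))$,
\[
\Bigl|\int_{\mathcal{G}_{r_0}(\by_k)} W^{(\by_k)}|u|^2\,\mathrm{d}\e\Bigr| \;\le\; 2C(d)\,\|\vec F\|_d \int |\nabla u|^2\,\mathrm{d}\e.
\]
Taking the infimum over $\vec F$ and combining with the kinetic term yields
\[
(H_2^{(\by_k)}u,u)_{\mathcal{G}_{r_0}(\by_k)} \;\ge\; \Bigl(1 - \frac{2C(d)}{\theta_2}\,D_d(W^{(\by_k)},\by_k,r_0)\Bigr)\int|\nabla u|^2\,\mathrm{d}\e.
\]
The choice of $\theta_2$ produces $\eta>0$ and $k_0$ such that the bracketed coefficient exceeds $\eta$ for all $k\ge k_0$, whence the Poincar\'e inequality on $\mathcal{G}_{r_0}(\by_k)$ gives $\lambda_0^{(2)}(\by_k) \ge \eta\mu_0 > -\infty$. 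In particular, \eqref{lamkyboundbel1} holds for $k=2$.

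Having established both bounds, Proposition \ref{lmperturbdisc}(iii) immediately delivers the conclusions: in setting (i), both $\liminf_k\lambda_0^{(j)}(\by_k) > -\infty$ hold, so $H$ is bounded below; in setting (ii), additionally $\lim_k\lambda_0^{(1)}(\by_k) = +\infty$, upgrading the conclusion to discreteness and boundedness below of the spectrum. The main technical delicacy will be the passage from the fixed-field inequality of Proposition \ref{lmest}(ii) to one involving the infimum $D_d(W^{(\by_k)},\by_k,r_0)$; this is a standard infimum-approximation argument, but it must be carried out with enough care to preserve the uniformity in $k$ that is needed to extract the positive constant $\eta$ from hypothesis (b).
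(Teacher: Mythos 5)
Your proposal is correct and follows essentially the same route as the paper: the same splitting into the constant piece $\mathrm{E}_{\by_k,r_0}(V)$ and the oscillatory piece $W^{(\by_k)}$, the same use of Proposition \ref{lmest}(ii) to bound the form of $H_2^{(\by_k)}$ via an admissible vector field, and the same appeal to Proposition \ref{lmperturbdisc}(iii). Your exact formula $\lambda_0^{(1)}(\by_k)=\mu_0+\theta_1^{-1}\mathrm{E}_{\by_k,r_0}(V)$ and the Poincar\'e step for $\lambda_0^{(2)}$ are slightly sharper than the paper's crude lower bounds, but both yield the required uniform bounds, so the difference is cosmetic.
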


Recall that the quantity $D_s(W,\,\by,\,r)$	 is defined in the line below definition \eqref{defDsW}. 

 The following claim is based on Theorem \ref{theorpert}  with the choice of vector fields $\vec F(\e)$ as minimizers in \eqref{defDsW} with $\Omega=\mathcal{G}_{r_0}(\by)$ under assumption that $\vec F(\e) $ is orthogonal to normal $\vec n(\e)$ at each point $\e\in\partial\mathcal{G}_{r_0}(\by)$. 
 For this choice we use Proposition \ref{prtranspr} and Remark \ref{remtrasport} connected with a divergence constrained transportation problem. On each domain $\mathcal{G}_{r_0}(\by)$ consider the following nonhomogeneous $d^\prime$-Laplace equation 
\begin{equation}\label{dLapleq}
-\mathrm{div}\big(|\nabla u|^{d^\prime-2}\nabla u\big)=d^{1/(d-1)}W^{(\by)}(\e)
\end{equation}
with $W^{(\by)}(\e)$, defined by \eqref{defWy} and $d^\prime=d/(d-1)$.  Also consider the Neumann boundary
condition
\begin{equation}\label{neumboundcnd1}
|\nabla u|^{d^\prime-2}\frac{\partial u}{\partial\vec n}\vert_{\partial \mathcal{G}_{r_0}(\by)}=0,
\end{equation}
We will suppose that the boundary $\partial\mathcal{G}$ of domain $\mathcal{G}$ is smooth. Then in view of definition  \eqref{dfcalGry} the domains $\mathcal{G}_{r_0}(\by)$ have the same property.

The promised claim is:

\begin{corollary}\label{thchicpert}
	Suppose that condition (a)  of Theorem \ref{theorpert} is fulfilled. Then
	claims (i) and (ii) of this theorem are valid, if instead of condition \eqref{cndpert} the following one 
\begin{equation}\label{cndpert2}
\limsup_{k\rightarrow\infty}\Big(\frac{1}{d^{1/(d-1)}}\int
_{\mathcal{G}_{r_0}(\by_k)}
|\nabla u_k(\e)|^{d^\prime}\,\mathrm{d}\e\Big)^{1/d}<\frac{1}{2C(d)}.
\end{equation}
is fulfilled, where  $u_k(\e)$ is a solution of the Neumann problem 
\eqref{dLapleq}-\eqref{neumboundcnd1} with $\by=\by_k$.
\end{corollary}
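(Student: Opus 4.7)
The plan is to reduce Corollary \ref{thchicpert} to Theorem \ref{theorpert} by exhibiting, for each $k$, an admissible vector field $\vec F_k\in\mathcal{D}_d(W^{(\by_k)},\by_k,r_0)$ whose $L_d$-norm is controlled by the gradient of $u_k$, so that hypothesis \eqref{cndpert2} forces the abstract condition \eqref{cndpert} on $D_d(W^{(\by_k)},\by_k,r_0)$. Once \eqref{cndpert} is verified, claims (i)--(ii) of the corollary follow directly from the corresponding claims of Theorem \ref{theorpert} under hypotheses \eqref{cndsemboundEy} and \eqref{cndEyinfty}, respectively.

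Solvability of the Neumann problem \eqref{dLapleq}--\eqref{neumboundcnd1} is the first point to verify. By the definition \eqref{defWy}, $\mathrm{E}_{\by_k,r_0}(W^{(\by_k)})=0$, so the compatibility condition $\int_{\mathcal{G}_{r_0}(\by_k)} W^{(\by_k)}\,d\e = 0$ is automatic; combined with smoothness of $\partial\mathcal{G}_{r_0}(\by_k)$, the standard variational theory for the $d'$-Laplacian furnishes a weak solution $u_k\in W^{1,d'}(\mathcal{G}_{r_0}(\by_k))$, unique up to additive constants and with enough regularity to give a meaningful trace $|\nabla u_k|^{d'-2}\partial u_k/\partial \vec n$ on the boundary.

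The key construction --- motivated by the Euler--Lagrange equations for the convex divergence-constrained minimization treated in Proposition \ref{prtranspr} (see also Remark \ref{remtrasport}) --- is
\[
\vec F_k(\e):=-d^{-1/(d-1)}\,|\nabla u_k(\e)|^{d'-2}\,\nabla u_k(\e).
\]
The scaling factor $d^{-1/(d-1)}$ is chosen precisely so that \eqref{dLapleq} becomes $\mathrm{div}\,\vec F_k = W^{(\by_k)}$ distributionally, while the Neumann condition \eqref{neumboundcnd1} translates into $\vec F_k\cdot\vec n = 0$ on $\partial\mathcal{G}_{r_0}(\by_k)$. Thus $\vec F_k\in \mathcal{D}_d(W^{(\by_k)},\by_k,r_0)$, and $D_d(W^{(\by_k)},\by_k,r_0)\le\|\vec F_k\|_d$. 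A direct calculation using $(d'-1)d=d'$ gives
\[
\|\vec F_k\|_d^d = d^{-d/(d-1)}\int_{\mathcal{G}_{r_0}(\by_k)}|\nabla u_k|^{d'}\,d\e,
\]
so $\|\vec F_k\|_d$ is majorized by the left-hand side of \eqref{cndpert2}. Combining this with \eqref{cndpert2} yields $\limsup_{k\to\infty}D_d(W^{(\by_k)},\by_k,r_0) < 1/(2C(d))$, i.e.\ exactly condition \eqref{cndpert}, and Theorem \ref{theorpert} delivers both assertions.

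The main technical obstacle is the invocation of Proposition \ref{prtranspr}: one must confirm that, with only $W^{1,d'}$-regularity for $u_k$, the field $\vec F_k$ really belongs to $\mathcal{D}_d(W^{(\by_k)},\by_k,r_0)$ in the distributional sense of Section \ref{sec:prelim}, and that the boundary trace $\vec F_k\cdot\vec n$ encoding \eqref{neumboundcnd1} is compatible with the integration-by-parts used to pass from \eqref{dLapleq} to $\mathrm{div}\,\vec F_k = W^{(\by_k)}$. This regularity/duality point is localized entirely inside Proposition \ref{prtranspr}; once it is cited, the rest of the proof is a bookkeeping exercise chaining \eqref{cndpert2} through the explicit formula for $\|\vec F_k\|_d$ into \eqref{cndpert}, and then quoting Theorem \ref{theorpert}.
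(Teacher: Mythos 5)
Your proof is correct and follows the same route as the paper: reduce to Theorem \ref{theorpert} by producing, via Proposition \ref{prtranspr}/Remark \ref{remtrasport}, an admissible $\vec F_k\in\mathcal{D}_d(W^{(\by_k)},\by_k,r_0)$ whose $L_d$-norm is controlled by the quantity in \eqref{cndpert2}. The paper's own proof is terser (it merely cites Proposition \ref{prtranspr} and asserts that \eqref{cndpert} follows); you have filled in the explicit formula $\vec F_k=-d^{-1/(d-1)}|\nabla u_k|^{d'-2}\nabla u_k$, the sign adjustment compensating for the minus in \eqref{dLapleq}, the exponent identity $(d'-1)d=d'$, and the resulting comparison $\|\vec F_k\|_d^d=d^{-d/(d-1)}\int|\nabla u_k|^{d'}\le d^{-1/(d-1)}\int|\nabla u_k|^{d'}$, which is exactly what the corollary needs.
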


\begin{remark}\label{remcndsembound}
In Section \ref{sec:examples} we have constructed an example (Example \ref{ex3}) of the potential $V(\e)$ satisfying conditions of  Corollary \ref{thchicpert}, but condition \eqref{condbndblc} of Theorem \ref{thgenermolch1} is not fulfilled for it
\end{remark}

\section{Proof of main results} \label{sec:proofmainres}
\setcounter{equation}{0}

\subsection{Localization principle}\label{subsec:localizprinc}

Let $\Omega$ be an open domain in $\R^d$ whose closure is compact.
 Consider the quantity:
\begin{eqnarray}\label{lowboundspect}
&&\lambda_0(\Omega):=\inf_{u\in C_0^\infty(\Omega),\,u\neq
	0}\frac{(H u,u)_\Omega}{\Vert u\Vert_\Omega^2}=\nonumber\\
&&\inf_{u\in C_0^\infty(\Omega),\,u\neq 0}\frac{\int_\Omega(|\nabla
	u(\e)|^2+V(\e)|u(\e)|^2)\, \mathrm{d}\e}{\int_\Omega|u(\e)|^2\,
	\mathrm{d}\e}.
\end{eqnarray}
In essence $\lambda_0(\Omega)$ is the minimal eigenvalue of the
generalized Dirichlet  boundary problem in the domain $\Omega$:
$Hu=\lambda u\;u\vert_{\partial\Omega}=0$. As is known, the spectrum
of this problem id discrete.  we denote briefly
$\lambda_0(\by,r)=\lambda_0(B_r(\by))$. The following localization
principle was established in \cite[Theorem 1.1]{K-Sh} for the case
of  magnetic Schr\"odinger operator, but in particular it is true
also in absence of the magnetic field.
\begin{proposition}\label{prloc} 
	(i) The operator $H$ is  bounded below if and only if for some $r>0$
	\begin{equation*}
	\liminf_{|\by|\rightarrow\infty}\lambda_0(\by,r)>-\infty;
	\end{equation*}
	
		(ii) The operator $H$ has discrete spectrum if and only if for some $r>0$
	\begin{equation*}
	\lim_{|\by|\rightarrow\infty}\lambda_0(\by,r)=+\infty.
	\end{equation*}
\end{proposition}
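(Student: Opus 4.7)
The plan is to use the IMS (Ismagilov--Morgan--Simon) localization formula as the central technical tool, together with the min-max principle and (for the hardest direction) Persson's characterization of the infimum of the essential spectrum. Fix a grid $\{\by_k\}_{k\in\N}\subset\R^d$ with spacing of order $r$ and a subordinate partition of unity $\{\phi_k\}$ satisfying $\sum_k\phi_k^2\equiv 1$, $\mathrm{supp}\,\phi_k\subset B_r(\by_k)$, and $\sum_k|\nabla\phi_k|^2\le C_0 r^{-2}$. Expanding $\|\nabla(\phi_k u)\|^2$ and using the cancellation $\sum_k\phi_k\nabla\phi_k=0$ yields the IMS identity
\begin{equation*}
(Hu,u)=\sum_k(H(\phi_k u),\phi_k u)-\sum_k\||\nabla\phi_k|\,u\|^2,\qquad u\in C_0^\infty(\R^d),
\end{equation*}
with each summand obeying $(H(\phi_k u),\phi_k u)\ge\lambda_0(\by_k,r)\|\phi_k u\|^2$.

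For part (i), the ``only if'' direction is immediate: $C_0^\infty(B_r(\by))\subset C_0^\infty(\R^d)$, so any uniform bound $(Hu,u)\ge\lambda\|u\|^2$ descends to $\lambda_0(\by,r)\ge\lambda$ for every $\by$. For ``if'', I combine the IMS identity with a uniform lower bound $\lambda_0(\by_k,r)\ge -M$ valid for all $k$: this comes from the hypothesis when $|\by_k|$ is large and from $V\in L_{\infty,loc}$ (which bounds $\lambda_0$ from below) on the finitely many balls with $|\by_k|$ small. Summing via $\sum_k\|\phi_k u\|^2=\|u\|^2$ gives $(Hu,u)\ge -(M+C_0 r^{-2})\|u\|^2$.

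For part (ii), the ``only if'' direction proceeds by contradiction: if $\lambda_0(\by_n,r)\le M$ along some $|\by_n|\to\infty$, pass to a subsequence along which the balls $B_r(\by_n)$ are pairwise disjoint, then pick $u_n\in C_0^\infty(B_r(\by_n))$ with $\|u_n\|=1$ and $(Hu_n,u_n)\le M+1$. The $u_n$ are orthonormal, so by min-max $H$ has at least $n$ eigenvalues in $(-\infty,M+1]$ for every $n$, contradicting discreteness. For ``if'', I first invoke (i) to obtain boundedness below (hence self-adjointness) of $H$, and then apply Persson's formula
\begin{equation*}
\inf\sigma_{\mathrm{ess}}(H)=\lim_{R\to\infty}\inf\bigl\{(Hu,u)/\|u\|^2:u\in C_0^\infty(\R^d\setminus B_R(0)),\;u\neq 0\bigr\}.
\end{equation*}
If $\mathrm{supp}\,u\subset\R^d\setminus B_R(0)$, then $\phi_k u\not\equiv 0$ forces $|\by_k|>R-r$, and the IMS identity yields
\begin{equation*}
(Hu,u)\ge\Bigl(\inf_{|\by_k|>R-r}\lambda_0(\by_k,r)-C_0 r^{-2}\Bigr)\|u\|^2,
\end{equation*}
whose right-hand side tends to $+\infty$ as $R\to\infty$ by hypothesis. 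Therefore $\inf\sigma_{\mathrm{ess}}(H)=+\infty$, so $\sigma_{\mathrm{ess}}(H)=\emptyset$ and $H$ has discrete spectrum.

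The main obstacle is the ``if'' direction of (ii): the passage from a form lower bound at infinity to actual discreteness of the spectrum. The cleanest route avoids direct compactness arguments by invoking Persson's formula, which for self-adjoint operators bounded below identifies the bottom of the essential spectrum with the limit above. A subsidiary technical point running through both parts is the use of $V\in L_{\infty,loc}$: it ensures that $\lambda_0(\by,r)$ is finite (and locally bounded) for every $\by$, so that both the uniform IMS estimate in (i) and the disjoint-support minimizing construction in the ``only if'' of (ii) are legitimate.
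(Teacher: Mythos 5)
The paper does not actually prove Proposition \ref{prloc}: it quotes it from \cite[Theorem 1.1]{K-Sh}, where it is established (for the magnetic Schr\"odinger operator) by precisely the kind of partition-of-unity argument you use. Your reconstruction is correct and follows the standard route: the IMS identity gives the ``if'' of (i), restriction of test functions gives the ``only if'', disjointly supported trial functions plus min--max give the ``only if'' of (ii), and Persson's formula gives the ``if'' of (ii). Two points are worth flagging. First, Persson's theorem is not free: you need $H$ self-adjoint, bounded below, with $C_0^\infty(\R^d)$ a form core, which you do secure through part (i) and the fact (quoted in the paper from \cite[Theorem 2.1]{K-Sh}) that semi-boundedness of $H^0$ implies self-adjointness of its closure; since the proof of Persson's formula is itself a two-piece IMS argument (inside and outside $B_R(0)$), you could make this step self-contained instead of citing it. Second, in the ``only if'' of (ii) your construction really shows that the spectral projection $E_{(-\infty,M+2)}(H)$ has infinite rank (Glazman's lemma); to contradict discreteness you must also exclude eigenvalues accumulating at $-\infty$, i.e.\ you tacitly use semi-boundedness. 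This is a defect of the statement as transcribed rather than of your argument --- the paper itself always pairs ``discrete'' with ``bounded below'' (Proposition \ref{prlocRicc}, Theorem \ref{thnesscond1}) --- but you should state explicitly that this direction is proved for $H$ self-adjoint and semi-bounded, or add the observation that a non-self-adjoint closed symmetric operator cannot have discrete spectrum at all, which reduces the issue to the semi-bounded case actually used in the paper.
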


Notice that firstly the localization principle for the one-dimensional case ($d=1$) was obtained in \cite{Is}.

\subsection{Localization principle via Riccati
PDE}\label{subsec:locRicc}

Let's agree to say that a function $u(\e)$ is a classical solution of some second order PDE in the closure $\bar\Omega$ of some open domain $\Omega\subseteq\R^d$, if it belongs to $C^2(\Omega)\cap C(\bar \Omega)$ and satisfies this equation in $\Omega$.

. Consider the stationary Schr\"odinger equation 
\begin{equation}\label{Schreq}
-\Delta u+V(\e)u=\lambda u,
\end{equation}
in a bounded closed domain $\bar\Omega\in\R^d$.
The change of variable $v=-\ln u$ leads to 
Riccati PDE
\begin{equation}\label{Ricceq}
-\Delta v+|\nabla v|^2+\lambda=V(\e),
\end{equation}
 We see that equation \eqref{Schreq} has a classical
positive solution in  $\bar \Omega$ if and only
if equation~\eqref{Ricceq} has there a  classical solution.  

The following modification of the localization principle is valid:

\begin{proposition}\label{prlocRicc}
	Assume that the potential $V(\e)$ is locally H\"older continuous.
Then the spectrum of $H$ is discrete and bounded below
	if and only if for some $r>0$ and $R>0$ there exists a real-valued
	function $\lambda(\by)\,(\by\in\R^d\setminus B_R(0))$ such that
	\begin{equation}\label{lambdytendsinf}
	\lim_{|\by|\rightarrow\infty}\lambda(\by)=+\infty
	\end{equation}
	 and  Riccati PDE 
\begin{equation}\label{Ricceq1}
-\Delta v+|\nabla v|^2+\lambda(\by)=V(\e),
\end{equation}	
	 has a classical solution in each ball
	$\bar B_r(\by)$ with $|\by|\ge R$.
\end{proposition}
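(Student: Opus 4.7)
The plan is to reduce Proposition \ref{prlocRicc} to the localization principle (Proposition \ref{prloc}) via the correspondence $u=e^{-v}$ between strictly positive classical solutions of the stationary Schr\"odinger equation \eqref{Schreq} and classical solutions of the Riccati equation \eqref{Ricceq}, which is already recorded just before the statement. The two auxiliary ingredients are a Barta-type ``ground-state substitution'' inequality (for sufficiency) and Schauder interior regularity (for necessity); the local H\"older hypothesis on $V$ enters precisely at this last point, upgrading weak eigenfunctions to $C^{2,\alpha}$ solutions so that their logarithms are classical solutions of \eqref{Ricceq1}.

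\emph{Sufficiency.} Suppose a classical solution $v_\by$ of \eqref{Ricceq1} exists on $\bar B_r(\by)$ for every $|\by|\ge R$. Then $u_\by=e^{-v_\by}$ is a strictly positive classical solution of $-\Delta u_\by+V u_\by=\lambda(\by)u_\by$ on $\bar B_r(\by)$. Given $\phi\in C_0^\infty(B_r(\by))$, set $w=\phi/u_\by$ (which is well-defined and compactly supported since $u_\by$ is bounded away from zero on $\bar B_r(\by)$) and expand $|\nabla\phi|^2=|\nabla(u_\by w)|^2$; integrating by parts yields
\begin{equation*}
\int_{B_r(\by)}\bigl(|\nabla\phi|^2+V|\phi|^2\bigr)\,\mathrm{d}\e = \lambda(\by)\int_{B_r(\by)}|\phi|^2\,\mathrm{d}\e + \int_{B_r(\by)}u_\by^2\,|\nabla w|^2\,\mathrm{d}\e.
\end{equation*}
Hence $\lambda_0(\by,r)\ge\lambda(\by)\to+\infty$, and Proposition \ref{prloc}(ii) delivers discreteness and boundedness below of the spectrum.

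\emph{Necessity.} Conversely, assume $H$ has discrete spectrum bounded below. Fix $r>0$, pick any $r'>r$, and set $\lambda(\by):=\lambda_0(\by,r')$; by Proposition \ref{prloc}(ii) applied at radius $r'$ one has $\lambda(\by)\to+\infty$. Let $u_\by$ be the positive principal Dirichlet eigenfunction on $B_{r'}(\by)$ at eigenvalue $\lambda(\by)$. Local H\"older continuity of $V$ together with Schauder interior estimates makes $u_\by$ of class $C^{2,\alpha}$ in $B_{r'}(\by)$, and the strong maximum principle forces $u_\by>0$ throughout this open ball, hence $u_\by\ge c_\by>0$ on the compact subset $\bar B_r(\by)\subset B_{r'}(\by)$. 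Setting $v_\by=-\ln u_\by$ produces a classical solution of \eqref{Ricceq1} on $\bar B_r(\by)$ at the chosen level $\lambda(\by)$.

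\emph{Main obstacle.} The delicate step is the necessity direction: a naive attempt using the principal eigenfunction on $B_r(\by)$ itself fails, since that eigenfunction vanishes on $\partial B_r(\by)$ and its logarithm blows up, violating even continuity on the \emph{closed} ball. The device of working with an eigenfunction on a strictly larger ball $B_{r'}(\by)\supset\bar B_r(\by)$ circumvents this boundary singularity, and this is precisely why the statement quantifies over \emph{some} pair $r,R$ rather than all $r$. The H\"older assumption on $V$ is essential in order for this eigenfunction to be a bona fide $C^2$ solution (not merely weak), so that $v_\by=-\ln u_\by$ lies in $C^2(B_r(\by))\cap C(\bar B_r(\by))$ as required by the paper's definition of classical solution.
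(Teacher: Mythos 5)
Your proof is correct and takes a genuinely different route from the paper. The paper's proof is a two-line appeal to Lemma \ref{lmpositsol}, which asserts the equivalence ``\eqref{Schreq} has a classical positive solution in $\bar\Omega$ $\iff$ $\lambda<\lambda_0(\Omega)$'' and is proved separately using the Moss--Piepenbrink lemma (for existence) and the Clark--Swanson comparison theorem (for nonexistence); the $v=-\ln u$ substitution then finishes the argument at the \emph{same} radius $r$. You bypass Lemma \ref{lmpositsol} entirely: for sufficiency you use the ground-state (Barta/Picone) substitution identity, which directly gives $\lambda_0(\by,r)\ge\lambda(\by)$ from a positive solution and is more elementary than invoking Clark--Swanson; for necessity you use the enlarged-ball trick with the principal Dirichlet eigenfunction of $B_{r'}(\by)$, plus interior Schauder regularity and the strong maximum principle, instead of invoking Moss--Piepenbrink to produce a positive solution on the \emph{closed} ball of the same radius. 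Both routes are sound; yours is more self-contained and makes the role of the H\"older hypothesis and of ``for some $r$'' transparent, while the paper's version stays at the original radius and delegates the analytic work to a standalone lemma.

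One small quantifier point in your necessity step: Proposition \ref{prloc}(ii) as stated in the paper only guarantees $\lambda_0(\by,r)\to+\infty$ for \emph{some} $r>0$, so ``Fix $r>0$, pick any $r'>r$, and apply Proposition \ref{prloc}(ii) at radius $r'$'' is not literally licensed by the paper's statement. The fix is trivial (and keeps the argument entirely within the paper's stated tools): first obtain $r'>0$ from Proposition \ref{prloc}(ii), then set $r:=r'/2$ and run your construction on $\bar B_r(\by)\subset B_{r'}(\by)$. Since Proposition \ref{prlocRicc} only asserts existence of \emph{some} pair $(r,R)$, this is all that is required. Alternatively one can quote the full Kondratiev--Shubin result that ``for some $r$'' is equivalent to ``for all $r$'', but that is more than the paper itself records.
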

\begin{proof}
In view of Proposition \ref{prloc} and Lemma \ref{lmpositsol}, the spectrum of $H$ is discrete and bounded below if and only if there exists a real valued function $\lambda(\by)$ such that $\lambda(\by)<\lambda_0(\by,r)$, condition \eqref{lambdytendsinf} is fulfilled and equation $-\Delta u+V(\e)u=\lambda(\by) u$  
has in $\bar B_r(\by)$ a classical positive solution $u(\e)$. Then the function $v=-\ln u$ 
 is a classical solution of \eqref{Ricceq1} in $\bar B_r(\by)$.
\end{proof} 

In the proof of Proposition \ref{prlocRicc} we have used the following claim:
\begin{lemma}\label{lmpositsol}
Let $\Omega\subset\R^d$ be a bounded domain with a smooth boundary and $\lambda_0(\Omega)$ is defined by \eqref{lowboundspect}. Assume that $V(\e)$ is H\"older continuous in $\bar\Omega$. Then equation \eqref{Schreq} has in $\bar\Omega$ a classical positive solution if and only if 
\begin{equation}\label{lamblesslamb0Om}
\lambda<\lambda_0(\Omega).
\end{equation}
\end{lemma}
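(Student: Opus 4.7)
The plan is to prove the two implications separately: the necessity direction by means of a Barta-type variational inequality and the sufficiency direction by solving an inhomogeneous Dirichlet problem together with the refined maximum principle. Under the assumed smoothness of $\partial\Omega$ and H\"older regularity of $V$, Schauder theory furnishes a principal Dirichlet eigenfunction $\phi_0\in H^1_0(\Omega)\cap C^{2,\alpha}(\bar\Omega)$ of $-\Delta+V$ on $\Omega$, satisfying $-\Delta\phi_0+V\phi_0=\lambda_0(\Omega)\phi_0$, $\phi_0>0$ in $\Omega$, $\phi_0|_{\partial\Omega}=0$, and $\partial_n\phi_0<0$ on $\partial\Omega$ by Hopf's boundary point lemma.

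For the necessity direction, suppose $u$ is a classical positive solution of \eqref{Schreq} in $\bar\Omega$. Since $u$ is continuous and strictly positive on the compact set $\bar\Omega$, it is bounded away from zero. For any test function $\phi\in C_0^\infty(\Omega)$ set $w=\phi/u$; this is smooth and compactly supported in $\Omega$. A direct computation, using integration by parts and the equation satisfied by $u$, yields the Barta identity
\begin{equation*}
\int_\Omega\bigl(|\nabla\phi|^2+V\phi^2\bigr)\,\mathrm{d}\e-\lambda\int_\Omega\phi^2\,\mathrm{d}\e=\int_\Omega u^2|\nabla w|^2\,\mathrm{d}\e\ge 0,
\end{equation*}
so the variational characterization of $\lambda_0(\Omega)$ gives $\lambda\le\lambda_0(\Omega)$. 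To upgrade this to a strict inequality, I would approximate $\phi_0$ by a sequence $\phi_n\in C_0^\infty(\Omega)$ converging to it in $H^1_0(\Omega)$; since $1/u$ is bounded on $\bar\Omega$, $\phi_n/u$ converges to $\phi_0/u$ in $H^1_0(\Omega)$, so the Barta identity passes to the limit, and the hypothetical equality $\lambda=\lambda_0(\Omega)$ forces $\nabla(\phi_0/u)\equiv 0$. Hence $\phi_0/u$ would be a constant function in $H^1_0(\Omega)$, which has vanishing trace and is therefore identically zero, contradicting $\phi_0>0$.

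For the sufficiency direction, assuming $\lambda<\lambda_0(\Omega)$, I would produce $u$ as the unique classical solution of the inhomogeneous Dirichlet problem
\begin{equation*}
-\Delta u+(V-\lambda)u=0\ \text{in }\Omega,\qquad u=1\ \text{on }\partial\Omega.
\end{equation*}
Writing $u=1+w$ converts this into a homogeneous Dirichlet problem for $w$ with right-hand side $\lambda-V$; its unique solvability in $H^1_0(\Omega)$ follows from the Fredholm alternative, because $\lambda<\lambda_0(\Omega)$ means that $\lambda$ is not a Dirichlet eigenvalue of $-\Delta+V$ and so the associated homogeneous problem admits only the trivial solution. Schauder estimates up to the boundary then yield $u\in C^{2,\alpha}(\bar\Omega)$. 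The principal Dirichlet eigenvalue of $L=-\Delta+V-\lambda$ equals $\lambda_0(\Omega)-\lambda>0$, so the refined Berestycki--Nirenberg--Varadhan maximum principle applies to $L$ on $\Omega$; combined with $Lu=0$ and the nonnegative boundary datum $u|_{\partial\Omega}=1$ this gives $u\ge 0$ in $\bar\Omega$, and Hopf's strong maximum principle upgrades it to $u>0$, producing the required classical positive solution.

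The step I expect to need the most care is the positivity argument in the sufficiency direction, because the zeroth-order coefficient $V-\lambda$ is generally sign-indefinite and the elementary weak maximum principle does not apply. The appropriate substitute is the refined maximum principle, which is available precisely when the principal Dirichlet eigenvalue of the operator in question is strictly positive; under our hypothesis and the variational formula \eqref{lowboundspect} this is exactly the case. The remaining ingredients---existence and Schauder regularity for inhomogeneous Dirichlet problems with H\"older coefficients, Hopf's boundary point lemma, and the Barta identity itself---are standard consequences of the regularity of $V$ and $\partial\Omega$.
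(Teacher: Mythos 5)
Your proposal is correct in substance, but it takes a genuinely different route from the paper. For the ``only if'' direction the paper applies the Clark--Swanson elliptic comparison theorem: if $\lambda\ge\lambda_0(\Omega)$ it compares the hypothetical positive solution with the principal Dirichlet eigenfunction at $\lambda_0(\Omega)$ and deduces that the former must vanish somewhere in $\bar\Omega$. You instead derive the Barta/Allegretto--Piepenbrink identity $\int_\Omega(|\nabla\phi|^2+(V-\lambda)\phi^2)\,\mathrm d\e=\int_\Omega u^2|\nabla(\phi/u)|^2\ge 0$ and then rule out equality. For the ``if'' direction the paper cites Moss--Piepenbrink (Lemma~2.3 of \cite{Moss-Piep}), which delivers a positive solution directly from the positivity of the form on all compactly contained subdomains; you instead construct the solution as the $C^{2,\alpha}(\bar\Omega)$ solution of a Dirichlet problem with boundary datum $1$, using the Fredholm alternative for solvability and the Berestycki--Nirenberg--Varadhan refined maximum principle (applicable because the principal eigenvalue of $-\Delta+V-\lambda$ is $\lambda_0(\Omega)-\lambda>0$) plus Hopf's lemma to obtain strict positivity. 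Your version is more self-contained and constructive; the paper's is shorter because the cited lemmas do most of the work.

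One step deserves tightening. In the strictness argument you claim that $\phi_n\to\phi_0$ in $H^1_0(\Omega)$ and boundedness of $1/u$ together imply $\phi_n/u\to\phi_0/u$ in $H^1_0(\Omega)$. That inference additionally needs $\nabla u$ to be bounded up to $\partial\Omega$, which is not guaranteed by the paper's notion of a classical solution in $\bar\Omega$, namely $u\in C^2(\Omega)\cap C(\bar\Omega)$: interior Schauder regularity does not extend to the boundary without a prescribed boundary condition. The gap is easily closed without any extra regularity: from the Barta identity the quantities $\int_\Omega u^2|\nabla(\phi_n/u)|^2$ converge to $0$; since $u$ is bounded away from zero on each compact subset of $\Omega$, this forces $\nabla(\phi_n/u)\to 0$ in $L^2_{\mathrm{loc}}(\Omega)$, while $\phi_n/u\to\phi_0/u$ in $L^2_{\mathrm{loc}}(\Omega)$; hence $\phi_0/u$ is constant in $\Omega$, and because $\phi_0$ vanishes on $\partial\Omega$ while $u$ does not, that constant is $0$, the desired contradiction. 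With this replacement your argument goes through under exactly the hypotheses of the lemma.
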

\begin{proof}
Assume that \eqref{lamblesslamb0Om} is valid. Hence
\begin{equation*}
\inf_{u\in C_0^\infty(\Omega),\,u\neq
	0}\frac{(H u-\lambda u,\,u)_\Omega}{\Vert u\Vert_\Omega^2}=\lambda_0(\Omega)-\lambda>0.
\end{equation*}
Therefore in particular for every domain $D$ with $\bar D\subset\Omega$ the only solution of \eqref{Schreq} in $D$, belonging to $C^2(D)\cap C^0(\bar D)$ and vanishing on $\partial D$, is $u\equiv 0$.
 Then by \cite[Lemma 2.3]{Moss-Piep},
  equation \eqref{Schreq} has in $\bar\Omega$ a classical positive solution $u(\e)$. Conversely, suppose that equation \eqref{Schreq} has such a solution. Assume on the contrary that $\lambda\ge\lambda_0(\Omega)$. Since $V(e)$ is locally H\"older continuous, then 
 by the definition of 
 $\lambda_0(\Omega)$ and \cite[Theorem 6.15]{Gil-Tr}  equation $-\Delta e+V(\e)e=\lambda_0(\Omega)e$ has in $\bar\Omega$ a nontrivial solution $e(\e)$, belonging to $C^{2,\alpha}(\bar\Omega)$ for some $\alpha\in(0,1)$ and satisfying the boundary condition $e(\e)\vert_{\e\in\partial\Omega}=0$. Applying the comparison theorem \cite[Theorem 1]{Cl-Sw} to the last equation and to \eqref{Schreq} , we obtain that $u(\e)$ has at least one zero in $\bar\Omega$. The latter contradicts the supposed property of $u(\e)$.
\end{proof}

\subsection{Proof of Theorem \ref{thnesscond1}}
\begin{proof}
Assume that the spectrum of $H$ is discrete and bounded below.
Then by Proposition~\ref{prlocRicc}, for any $r>0$ there exist a
real-valued function $\lambda(\by)\,(\by\in\R^d)$ such that
\begin{equation}\label{lambdyconvinft}
\lim_{|\by|\rightarrow\infty}\lambda(\by)=+\infty
\end{equation}
and for any $\by\in\R^d$ there is a function $v^{(\by)}\in C^2( \bar B_r(\by))$
satisfying in $B_r(\by)$ Riccati PDE, i.e.,
\begin{equation}\label{satRicceq}
-\Delta v^{(\by)}(\e)+|\nabla v^{(\by)}(\e)|^2+\lambda(\by)=V(\e)
\end{equation}
	 Denote $w^{(\by)}(t)=\int_{\partial B_t(\by)}
\frac{\partial v^{(\by)}(\e)}{\partial\vec n}\,ds(\e)$. As is clear,
$w\in C[0,r]$. Using Green's formula, we have for $0\le s< t\le r$:
\begin{equation}\label{Greenform}
\int_{ L_{s,t}(\by)}\Delta v^{(\by)}(\e)\,
\mathrm{d}\e=\int_{\partial L_{s,t}(\by)} \frac{\partial
	v^{(\by)}(\e)}{\partial\vec n}\,ds(\e)=w^{(\by)}(t)-w^{(\by)}(s)
\end{equation}
On the other hand, using Schwartz's inequality, we have
\begin{eqnarray}\label{usSchvineq}
&&\int_s^t(w(^{(\by)}(\xi))^2\,d\xi=\int_s^t\Big(\int_{\partial
		B_\xi(\by)}\nabla v^{(\by)}(\e)\cdot\vec
	n(\e)\,ds(\e)\Big)^2\,d\xi\le\nonumber\\
	&&\sigma_{d}r^{d-1}\int_{ L_{s,t}(\by)}|\nabla
	v^{(\by)}(\e)|^2\, \mathrm{d}\e,
	\end{eqnarray}

Let us prove claim (i). It is clear that in order to prove \eqref{mesconvinfty1},
it is enough to show that for any $\delta\in(0,r)$ and $A>0$
\begin{equation*}
\lim_{|\by|\rightarrow\infty}\mathrm{mes}_d(F_\by^\delta(A))=0,
\end{equation*}
where 
\begin{equation*}
F_\by^\delta(A)=\{t\in[\delta,\,r]:\;{\mathcal W}_\by(t)\le
A\cdot 
\mathrm{mes}_d(B_t(0))\}
\end{equation*}
Let us put $s=0$ in \eqref{usSchvineq} and denote 
\begin{equation}\label{defut}
u^{(\by)}(t)=(\omega_dr^{d-1})^{-1}w^{(\by)}(t).
\end{equation}
 Then in view of \eqref{satRicceq}-\eqref{usSchvineq}, we obtain:
\begin{eqnarray}
	&&\mathcal{W_\by}(t)=\int_{ B_{t}(\by)}V(\e)\,
	\mathrm{d}\e\ge
	\sigma_{d}r^{d-1}\Big(-u^{(\by)}(t)+\int_0^t(u^{(\by)}(\xi))^2\,d\xi\Big)+\nonumber\\
	&&\lambda(\by)\mathrm{mes}_d(B_t(0))\nonumber.
\end{eqnarray}
Notice that in view of \eqref{lambdyconvinft} there is $R>0$ such that $\lambda(y)>A$ for $\by\in\R^d\setminus B_R(0)$. Hence, since for $t\in[\delta,r]$ $\mathrm{mes}_d(B_t(0))\ge t\cdot\omega_d\delta^{d-1}/d $, the set $F_\by^\delta(A)$ is contained in the set
\begin{eqnarray}
\tilde F_\by^\delta(A)=\{ t\in[\delta ,r]:\;u^{(\by)}(t)\ge
	\int_0^t(u^{(\by)}(\xi))^2\,
	\mathrm{d}\xi+\tilde\lambda_A^\delta(\by)\cdot t\}\nonumber,
	\end{eqnarray}
where 
\begin{equation*}
\tilde\lambda_A^\delta(\by)=\frac{(\lambda(\by)-A)\delta^{d-1}}{d\cdot r^{d-1}.}
\end{equation*}
Then in view of\eqref{lambdyconvinft}, Lemma \ref{lmintineq} implies that
\begin{equation}
\lim_{|\by|\rightarrow\infty}\mathrm{mes}_d(\tilde F_\by^\delta(A))=0.
\end{equation}
Therefore the desired equality \eqref{mesconvinfty1} is valid.	

 Now let us prove claim (ii). It is clear that in order to prove \eqref{mesconvinfty},
	it is enough to show that for any $\delta\in
	(0,r]$ and $A>0$ 
	\begin{equation}\label{limmesEdelta}
	\lim_{|\by|\rightarrow\infty}\mathrm{mes}_d(E_\by^\delta(A))=0,
	\end{equation}
	where
	\begin{eqnarray}
		&&E_\by^\delta(A)=\{\langle s,t\rangle\in\Delta_r^\delta:\;\mathcal{V_\by}(s,t)\le
		A\,\mathrm{mes}_{\,d}( L_{s,t}(0))\},\nonumber\\
		&&\Delta_r^\delta=\{\langle s,t\rangle\in\R^2:\;\delta<s<t<r\}.\nonumber
	\end{eqnarray}
	Notice that for $\langle t,s\rangle\in \Delta_r^\delta$ $\;\mathrm{mes}_{\,d}( L_{s,t}(0))\ge\omega_d\delta^{d-1}(t-s)$.
Then in the similar manner as above we obtain from \eqref{satRicceq}-\eqref{usSchvineq} that for $\by\in\R^d\setminus B_R(0)$ the set $E_\by^\delta(A)$ is contained in the set
	\begin{equation*}
	\tilde
		E_\by^\delta(A)=\{\langle s,t\rangle\in\Delta_r^\delta:\;u^{(\by)}(t)-u^{(\by)}(s)\ge
		\int_s^t(u^{(\by)}(\xi))^2\,
		\mathrm{d}\xi+\bar\lambda_A^\delta(\by)(t-s)\}
		\end{equation*}
	where $u^{(\by)}(t)$  is defined by \eqref{defut} and 
	\begin{equation*}
	\bar\lambda_A^\delta(\by)=\frac{(\lambda(\by)-A)\delta^{d-1}}{r^{d-1}.}
	\end{equation*}
	Then Proposition~\ref{mainprAp2} implies that
	$\lim_{|\by|\rightarrow\infty}\mathrm {mes}_{\,2}(\tilde
	E_\by^\delta(A))=0$. Hence \eqref{mesconvinfty} is valid.
\end{proof}

\subsection{Proof of Proposition \ref{lmperturbdisc}}

\begin{proof}
	Let us represent:
	\begin{eqnarray}\label{reprH}
	&&\hskip-8mmH=-\Delta+V_1^{(\by)}(\e)\cdot+V_2^{(\by)}(\e)\cdot=\nonumber\\
&&\hskip-8mm	(-\theta_1\Delta+V_1^{(\by)}(\e)\cdot)+(-\theta_2\Delta+V_2^{(\by)}(\e)\cdot)=
	\theta_1H_1^{(\by)}+\theta_2H_2^{(\by)}.
	\end{eqnarray}
Then in view of \eqref{dflamky}
	\begin{eqnarray}
		&&\lambda_0(\by,r_0):=\inf_{u\in C_0^\infty(\mathcal{G}_{r_0}(\by)),\,u\neq
			0}
		\frac{(H u,u)_{\mathcal{G}_{r_0}(\by)}}{\Vert
			u\Vert_{\mathcal{G}_{r_0}(\by)}^2}\ge\theta_1\lambda_0^{(1)}(\by)+\theta_2\lambda_0^{(2)}(\by).\nonumber
	\end{eqnarray}
This inequality and Proposition \ref{prloc} imply claims (i)  and (ii) .	
Let us prove claim (iii).  By Definition \ref{defrhocov} of $\rho$-covering, for some $\rho>0$ and any $\by\in\R^d$ there is $j$ such that $B_\rho(\by)\subseteq\mathcal{G}_{r_0}(\by_j)$.
Then the obvious inequality
\begin{equation*}
\inf_{u\in C_0^\infty(B_\rho(\by)),\,u\neq
	0}\frac{(H u,u)_{B_\rho(\by)}}{\Vert
	u\Vert_{B_\rho(\by)}^2}\ge\lambda_0(\by_j,r_0)
\end{equation*}
and Proposition \ref{prloc} imply claim (iii).
\end{proof}		

\subsection{Proof of Theorem \ref{thgenermolch1}}

\begin{proof}
	(i) We have: $V(\e)=V_+(\e)-V_-(\e)$. Let us take $\theta_1>0$,
	$\theta_2>0$ such that $\theta_1+\theta_2=1$ and represent
	$H=\theta_1H_++\theta_2H_-$,
	where $H_+=-\Delta+(\theta_1)^{-1}V_+(\e)\cdot$ and
	$H_-=-\Delta-(\theta_2)^{-1}V_-(\e)\cdot$. It is clear that the
	operator $H_+$ us nonnegative. Using condition \eqref{condbndblc}
	we can choose $\theta_2\in(0,1)$ such that
	\begin{equation}\label{cndbndbld}
	\bar
	L=\limsup_{|\by|\rightarrow\infty}\Big(\int_{\mathcal{G}_{r_0}(\by)}(V_-(\e))^{d/2}\,
	\mathrm{d}\e\Big)^{2/d}<\theta_2\,K(d).
	\end{equation}
 By claim (i) of Proposition \ref{lmperturbdisc}, with 
 \begin{equation*}
  V_1^{(\by)}=V_+,\quad  V_2^{(\by)}=-V_-,\quad H_1^{(\by)}=H_+,\quad H_2^{(\by)}=H_-, 
 \end{equation*}
  in order to prove that the operator $H$ is bounded below it is enough
	to show that the operator $H_-$ is bounded below. By claim (i) of Proposition \ref{lmest}, 
	\begin{equation*}
	\int_{\mathcal{G}_{r_0}(\by)} V_-(\e)|u(\e)|^2\,
	\mathrm{d}\e\le C^2(d)\Vert
	V_-\Vert_{d/2,\,\mathcal{G}_{r_0}(\by)}\int_{\mathcal{G}_{r_0}(\by)}|\nabla u(\e)|^2\,
	\mathrm{d}\e 
	\end{equation*}
	for any $u\in C_0^\infty(\mathcal{G}_{r_0}(\by))$. Then, in view of
	\eqref{cndbndbld} and \eqref{dfKd} there is $R>0$ such that for
	any $\by\notin B_R(0)$ and $r\in(0,r_0]$ 
	\begin{eqnarray}
		&&(H_-u,\,u))_{\mathcal{G}_r(\by)}=\int_{\mathcal{G}_r(\by)}\big(|\nabla
		u(\e)|^2-(\theta_2)^{-1}V_-(\e)|u(\e)|^2\big)\, \mathrm{d}\e\ge\nonumber\\
		&&\big(1-\bar L\,(\theta_2\,K(d))^{-1}\big)\int_{\mathcal{G}_r(\by)}|\nabla
		u(\e)|^2\, \mathrm{d}\e\ge 0.\nonumber
	\end{eqnarray}
	This circumstance and claim (i) of Proposition \ref{prloc} imply
	that the operator $H_-$ is bounded below. Claim (i) is proven.
	 
	(ii)  By claim (ii) of Proposition \ref{lmperturbdisc} and claim (ii)  of Proposition \ref{prloc}, it is enough to show that the spectrum of operator $H_+$ is discrete. But this fact is guarantied by each of claims, mentioned in the formulation of claim (ii) of the present theorem. Notice that while  we use Propositions \ref{thcondlebesrear} and \ref{thdiscLagr}, we should apply the obvious properties:
	\begin{eqnarray}
	&&(\alpha\cdot V_+)^\star(t,\by,r)=\alpha\cdot V_+^\star(t,\by,r),\quad \mathrm{E}_{\by,r}(\alpha\cdot V_+)=\alpha\cdot \mathrm{E}_{\by,r}(V_+),\nonumber\\
	&& \mathrm{Dev}_{\by,r}(\alpha\cdot V_+)=\alpha\cdot\mathrm{Dev}_{\by,r}(V_+),
	 \end{eqnarray}
where $\alpha>0$.	
\end{proof}

\subsection{Proof of Corollary\ref{corNtendinfty}}
\begin{proof}
	In view of the condition, imposed on integrals $I_{\by}(N)$, there is $N>0$ such that for any
	$\by\in\R^d$ 
	\begin{equation*}
	\int_{\mathcal{G}_{r_0}(\by)}(V(\e)+N)_-^{d/2}\,
	\mathrm{d}\e\le(K(d))^{d/2}, 
	\end{equation*}
	where $K(d)$ is defined by \eqref{dfKd}.
	Hence the operator $H+NI$ satisfies the condition of claim (i) of
	Theorem \ref{thgenermolch1} and if condition \eqref{cndmolch1}
	holds, it satisfies the condition of claim (ii) of this theorem.
	Therefore the conclusions of these claims are valid.
\end{proof}

\subsection{Proof of Corollary \ref{thgenermolch}}
\begin{proof}
	Let us take $N>0$. Condition \eqref{cndbndbld2} and definitions \eqref{pospart} imply that
	for any $\by\in\R^d$ and $\e\in B_{r_0}(\by)\;$ $(V(\e)+N)_-\le
	(\psi(\e-\by)-N)_+$. Hence
	\begin{equation*}
	\int_{\mathcal{G}_{r_0}(\by)}(V(\e)+N)_-^{d/2}\,
	\mathrm{d}\e\le\int_{\mathcal{G}_{r_0}(0)}(\psi(\e)-N)_+^{d/2}\, \mathrm{d}\e.
	\end{equation*}
	Since $\psi\in L_{d/2}(\mathcal{G}_{r_0}(0))$, then
	$\lim_{N\rightarrow\infty}\int_{\mathcal{G}_{r_0}(0)}(\psi(\e)-N)_+^{d/2}\,
	\mathrm{d}\e=0$. Hence the condition of Corollary \ref{corNtendinfty} is fulfilled.
\end{proof}

\subsection{Proof of Theorem \ref{theorpert}}
\begin{proof}
	In order to use Proposition \ref{lmperturbdisc}, let us take
	\begin{eqnarray}
&&	H_1^{(\by_k)}=-\Delta+\theta_1^{-1}\mathrm{E_{\by_k,r_0}}(V)\cdot,\quad H_2^{(\by_k)}=-\Delta+
	\theta_2^{-1}W^{(\by_k)}(\e)\cdot,\nonumber\\
&&	\theta_1>0,\quad \theta_2>0,\quad \theta_1+\theta_2=1.
	\end{eqnarray}
	Assume that $\theta_2\in[2C(d)\bar D,\,1)$.
	In view of definition \eqref{defDsW} and
	condition  \eqref{cndpert},
	there exists $R>0$ such that for any $k$
	there is a vector field
	$\vec F_{\by_k}\in \mathcal{D}_d(W^{(\by_k)},\by_k)$, for which
	$\Vert \vec F_{\by_k}\Vert_d\le\theta_2/(2C(d))$. Taking into
	account that
	\begin{equation*}
	W^{(\by_k)}(\e)=\mathrm{div}\,\vec F_{\by_k}(\e)\;(\e\in \mathcal{G}_{r_0}(\by_k))
	\end{equation*}
	and using claim (ii) of Proposition \ref{lmest}, we obtain for $u\in C_0^\infty(\mathcal{G}_{r_0}(\by_k))$:
	\begin{eqnarray}
		|\int_{\mathcal{G}_{{r_0)}}(\by_k)}W^{(\by_k)}(\e)|u(\e)|^2\, \mathrm{d}\e|\le
		\theta_2\int_{\mathcal{G}_{r_0}(\by_k)}|\nabla u(\e)|^2\,\mathrm{d}\e,\nonumber
	\end{eqnarray}
	hence
	\begin{eqnarray}
&&(H_2^{(\by_k)} u,u)_{\mathcal{G}_{r_0}(\by_k)}=\nonumber\\
&&\int_{\mathcal{G}_{r_0}(\by_k)}\Big(|\nabla
u(\e)|^2+\theta_2^{-1}W^{(\by_k)}(\e)|u(\e)|^2\Big)\,\mathrm{d}\e	\ge 0
	\end{eqnarray}
	Furthermore,
	\begin{eqnarray}
	&&(H_1^{(\by_k)} u,u)_{\mathcal{G}_{r_0}(\by_k)}=
	\nonumber\\
	&&\int_{\mathcal{G}_{r_0}(\by)}\nabla u(\e)|^2\,\mathrm{d}\e+\theta_1^{-1}\mathrm{E_{\by_k,r_0}}|(V)
	\int_{\mathcal{G}_{r_0}(\by)}|u(\e)|^2\,\mathrm{d}\e\ge
	\nonumber\\
	&&\theta_1^{-1}\mathrm{E_{\by_k,r_0}}|(V)
	\int_{\mathcal{G}_{r_0}(\by_k)}|u(\e)|^2\,\mathrm{d}\e\nonumber
	\end{eqnarray}
	These estimates, condition (a), conditions \eqref{cndsemboundEy}, \eqref{cndEyinfty} with $\by=\by_k$ and claim (iii) of Proposition \ref{lmperturbdisc}
imply the desired claims.
\end{proof}

\subsection{Proof of Corollary \ref{thchicpert}}
\begin{proof}
In view of definition \eqref{defWy}, 
\begin{equation*}
\int_{\mathcal{G}_{r_0}(\by_k)} W^{(\by_k)}(\e)\,\mathrm{d}\e=0.
\end{equation*}
Then using Proposition \ref{prtranspr} for the optimal choice of the vector field $\vec F(\e)$ on each domain $\mathcal{G}_{r_0}(\by_k)$, 
 we obtain that conditions of  Theorem \ref{theorpert}  are fulfilled. Hence the conclusions of this theorem are valid. 
\end{proof}

\section{Examples}\label{sec:examples}
\setcounter{equation}{0}

\begin{example}\label{ex1}
	The following example was promised in Remark \ref{remcompar1}.
	  We will show that conclusion (ii) of Theorem \ref{thnesscond1} does not imply  (i).  To do this, we  should construct a potential $V(\e)$ that satisfies (ii) but not (i). 
	   First construct it in the ball $B_{r_0}(0)$. Consider the following sequence of functions, defined in $[0, r_0]$:
	\begin{eqnarray}\label{defSn}
S_n(x)=\left\{\begin{array}{ll}
-\frac{n^2}{r_0}x&\mathrm{for}\quad x\in[0,\,r_0/n)\\
-\frac{n}{r_0(1-r_0/n)}(r_0-x)&\mathrm{for}\quad x\in[r_0/n,\,r_0]
\end{array}
\right.\nonumber
	\end{eqnarray}
	Since $S_n(x)\le 0$ in $[0,\,r_0]$, it cannot tend to infinity in measure $\mathrm{mes}_1$. On the other hand, it is easy to see that the sequence 
\begin{equation*}
X_n(t, s)=S_n(t)-S_n(s)=\frac{n}{r_0(1-r_0/n)}(t-s)\quad(r_0/n<s<t) 
\end{equation*}	
	 tends to infinity in measure $\mathrm{mes}_2$ on the triangle $\Delta_{r_0}=\{\langle s,t\rangle\in\R^2:\,0< s<t<r_0\}$.
	 Now we can construct the desired potential $V(\e)$. Consider the function $\tilde V_n(x)$ such that
	 \begin{equation*}\label{eqtiln}
	 \omega_d\int_0^x\tilde V_n(\rho)\rho^{d-1}\,\mathrm{d}\rho =S_n(x),
	 \end{equation*}
	 hence
	 \begin{equation*}\label{dftildVn}
	 \tilde V_n(x)=\frac{S_n^\prime(x)}{\omega_d x^{d-1}}.
	 \end{equation*}
 For each $\vec\ell\in\Z^d$ consider the ball $B_{r_0}(\vec\ell)$ with $r_0<1/2$. It is clear that these balls are disjoint. Also consider the disjoint cubes 
	 \begin{equation*}\label{disjcub}
	 Q_{1/2}(\vec\ell)=Q_{1/2}(0)+\{\vec\ell\}, \quad\mathrm{where}\quad Q_{1/2}(0)=\prod_{k=1}^d. [-1/2,\,1/2\big)\quad 
	 \end{equation*}
	 They cover $\R^d$ and each of them contains the unique ball $B_{r_0}(\vec\ell)$. Let us define the potential  $V(\e)$ in each cube $Q_{1/2}(\vec\ell)\;(\vec\ell\in\Z^d)$ in the following manner:
	 \begin{eqnarray}
	 V(\e)=\left\{\begin{array}{ll}
	 \tilde V_n(|\e-\vec\ell|)&\mathrm{for}\quad \e\in B_{r_0}(\vec\ell),\\
	 \tilde V_n(r_0)&\mathrm{for}\quad \e\notin B_{r_0}(\vec\ell),
	 \end{array}
	 \right.\nonumber
	 \end{eqnarray}
	 where $n=|\vec\ell|_\infty$. In view of the above considerations, conclusion (i) of Theorem \ref{thnesscond1} is not valid for $V(\e)$ \footnote{In particular,  by Theorem \ref{thnesscond1} this means that either the operatot $H$ is not bounded below, or its spectrum is not discrete} and  conclusion (ii) is valid on the balls $B_{r_0}(\vec\ell)$.  It is easy to see  that the latter is valid for any sequence of balls that go to infinity and  have any fixed radius.
	\end{example}
\begin{remark}
We see that the potential constructed above is not continuous at points belonging to some hypersurfaces. But according to the formulation of Theorem \ref{thnesscond1} it should be locally. H\"lder continuous.  We could achieve this by smoothing the potential in vicinities of the discontinuity hypersurfaces, but we didn't want to create unnecessary technical difficulties.
\end{remark}
	  \begin{remark}\label{remtry}
	  	We tried to construct a potential $V(\e)$ that satisfies conclusion (i) of Theorem \ref{thnesscond1} but not  (ii)  on the base of following sequence of functions, defined in $[0,\,r_0]$.
	  	\begin{equation*}
	  	S_n(x)=A_nD_n(x), 
	  	\end{equation*},
	  	where
	  	\begin{equation*}\label{dfDnx}
	  	D_n(x)=U(nx/r_0),
	  	\end{equation*}
	  	\begin{equation*}\label{dfSnx}
	  	U(x)=\sum_{k=1}^{\infty} T(x-k+1/3),
	  	\end{equation*}
	  	\begin{eqnarray}\label{dfTx}
	  	T(\e)=\left\{\begin{array}{ll}
	  	3x&\mathrm{for}\quad x\in[0,\,1/3),\\
	  	1&\mathrm{for}\quad x\in[1/3,\,2/3),\\
	  	3(1-x)&\mathrm{for}\quad x\in[2/3,\,1],\\
	  	0&\mathrm{for}\quad x\ge 1\quad\mathrm{and}\quad x<0..
	  	\end{array}
	  	\right.\nonumber
	  	\end{eqnarray}
	  	and 
	  		\begin{equation*}\label{Antendsinfty}
	  	A_n\rightarrow+\infty\quad\mathrm{as}\quad n\rightarrow\infty.
	  	 \end{equation*}
	  Next we constructed the potential $V(\e)$ in the same manner as in the above example.
	 We showed that $V(\e)$  does not satisfy conclusion (ii) of Theorem \ref{thnesscond1} and satisfies (i) on the balls $B_{r_0}(\vec\ell)$. But so far we have not been able to find out whether the latter is true on any sequence of balls 
	 that go to infinity and have any fixed radius. 
	 \end{remark}

\begin{example}\label{ex2}
Here we construct the example, promised in Remark \ref{remsembound}.
		It is based on the example constructed in
	\cite{Zel}  which shows that Molchanov's condition and
	semi-boundedness of the one-dimensional Schr\"odinger operator
	$L=-\frac{d^2}{dx^2}+V(x)\cdot$ do not imply the discreteness of
	its spectrum. In the $d$-dimensional case with $d\ge 3$ we take the
	Schr\"odinger operator with the divided variables
	$H=-\Delta+W(\e)\cdot$, where 
	\begin{equation*}
	W(\e)=V(x_1)+\sum_{j=2}^dU(x_j),
	\end{equation*}
	where $U\in L_{\infty, loc}(\R)$, $U(x)>0$, 
	\begin{equation}\label{Uxinfty}
	U(x)\rightarrow\infty\quad \mathrm{as} \quad |x|\rightarrow\infty
	\end{equation}
	and $V(x)$ is the potential constructed in \cite{Zel}. Also
	consider the Schr\"odinger operator
	$M=-\frac{d^2}{dx^2}+U(x)\cdot$, which is positive and has the
	discrete spectrum.  Let us recall the construction of the
	potential $V(x)$. Consider the monotone sequence of positive
	numbers $(\alpha_n)_{n=1}^\infty$, such that
	\begin{equation}\label{cndalfn}
	\alpha_n\rightarrow\infty,\quad \frac{\alpha_n}{\sqrt{\alpha_n}+\sqrt{\alpha_{n+1}}}\rightarrow\infty\quad\mathrm{as}\quad n\rightarrow\infty,
	\end{equation}
	and the
	sequence of semi-intervals covering the axis $\R$
	\begin{eqnarray}
		&&\hskip-5mm\dots
		(m_{-1},\,l_{-3}];\,(l_{-3},\,l_{-2}];\,(l_{-2},\,m_0];\,(m_0,\,l_{-1}];\,(l_{-1},\,l_{0}];\,(l_{0},\,m_1];\,(m_1,\,l_1];\nonumber\\
		&&\hskip-5mm(l_1,\,l_2];\,(l_2,\,m_2];\,(m_2,\,l_3];\,(l_3,\,l_4];\,(l_4,\,m_3];\dots(m_k,\,l_{2k-1}];\,(l_{2k-1},\,l_{2k}];\nonumber\\
		&&\hskip-5mm(l_{2k},\,m_{k+1}];\dots\nonumber
	\end{eqnarray}
	and satisfying the conditions
	\begin{eqnarray}\label{dflength}
	&&l_{2k-1}-m_k=a>0,\quad
	m_{k+1}-l_{2k}=l_{2k}-l_{2k-1}=\nonumber\\
	&&\frac{1}{\sqrt{\alpha_k}}\Big[\pi/2-
	\arctan\big(\tanh(\sqrt{\alpha_k}a)\big)\Big].
	\end{eqnarray}
	The potential $V(x)$ is constructed in the following manner:
	\begin{eqnarray}\label{dfVx}
	V(x)=\left\{\begin{array}{ll}
	\alpha_k&\quad\mathrm{for}\quad
	x\in(m_k,\,l_{2k-1}],\\
	-\alpha_k&\quad\mathrm{for}\quad x\in(l_{2k-1},\,l_{2k}],\\
	-\alpha_{k+1}&\quad\mathrm{for}\quad x\in(l_{2k},\,m_{k+1}].
	\end{array}\right.
	\end{eqnarray}
	Consider the following semi-intervals, overlapping by pairs and
	covering the axis $\R$:
	\begin{equation*}
	(c_k,\,c_{k+1}]=(m_k,\,l_{2k-1}]\cup(l_{2k-1},\,l_{2k}]\cup(l_{2k},\,m_{k+1}]\cup(m_{k+1},\,l_{2k+1}],
	\end{equation*}
	and on each of them consider the boundary value problem for the
	equation $L\phi=0$ with the boundary conditions
	$\phi(c_k)=\phi(c_{k+1})=0$. It is easy to check that the solution
	of this problem is
	\begin{displaymath}
	\phi_k(x)==\left\{\begin{array}{ll}
	\frac{\sinh\big(\sqrt{\alpha_k}(x-c_k)\big)}{\sqrt{\sinh^2\big(\sqrt{\alpha_k}\,a\big)+\cosh^2\big(\sqrt{\alpha_k}\,a\big)}}&\quad\mathrm{for}\quad x\in(m_k,\,l_{2k-1}],\\
	\sin\Big[\sqrt{\alpha_k}\big(x-c_k-a\big)+\\\arctan\big(\tanh(\sqrt{\alpha_k}\,a)\big)\Big]&\quad\mathrm{for}\quad
	x\in(l_{2k-1},\,l_{2k}],\\
	\sin\Big[\sqrt{\alpha_{k+1}}\big(c_{k+1}-a-x\big)+\\\arctan\big(\tanh(\sqrt{\alpha_{k+1}}\,a)\big)\Big]&\quad\mathrm{for}\quad
	x\in(l_{2k},\,m_{k+1}],\\
	\frac{\sinh\big(\sqrt{\alpha_{k+1}}(c_{k+1}-x)\big)}{\sqrt{\sinh^2\big(\sqrt{\alpha_{k+1}}\,a\big)+\cosh^2\big(\sqrt{\alpha_{k+1}}\,a\big)}}&\quad\mathrm{for}\quad
	x\in(m_{k+1},\,l_{2k+1}].
	\end{array}\right.
	\end{displaymath}
	Since $\phi_k(x)\neq 0$ for $x\in(c_k,\,c_{k+1})$, then
	$\phi_k(x)$ is an eigenfunction corresponding to the minimal
	eigenvalue $\lambda_0=0$ of the boundary problem. Furthermore, it
	is easy to see that on each interval with the length $<a$ the
	equation $Ly=0$ has a solution which does not vanish. Hence by the
	localization principle the operator $L$ is bounded below and its
	spectrum is not discrete. Furthermore it is known that the
	spectrum $\sigma(H)$ of the operator $H$ is equal to the algebraic
	sum of the spectrum $\sigma(L)$ of the operator $L$ and $d-1$
	exemplars of the spectrum $\sigma(M)$ of the operator $M$:
	$\sigma(H)=\sigma(L)+\sigma(M)+\dots+\sigma(M)$ \cite{Ber}, hence the
	operator $H$ is bounded below and  its spectrum is not discrete.
	Let us show that the potential $W(\e)=V(x_1)+\sum_{j=2}^dU(x_j)$
	of the operator $H$ satisfies the condition \eqref{cndmolch1}. 
	Notice that by Proposition \eqref{thcondlebesgue} and Remark \ref{remwithoutnonneg}
	it is enough to show that for any $r>0$ and any
	sequence of points
	$\by_\nu=\langle y_{1,\nu},y_{2,\nu},\dots,y_{d,\nu}\rangle\in\R^d$, such that
	\begin{equation}\label{ynuinfty}
	\lim_{\nu\rightarrow\infty}|\by_\nu|=\infty,
	\end{equation}
	the condition	is valid:
	\begin{equation}\label{cndmeasinstcap1}
	\lim_{\nu\rightarrow\infty}\inf_{F\in\mathcal{M}_{\tilde\gamma(r)}(\by_\nu,r)}\int_{B_r(\by_\nu)\setminus
		F}W(\e)\, \mathrm{d}\e=\infty,
	\end{equation}
	where $\tilde\gamma(r)\in(0,1)$. 
Consider two cases:

1) $\lim_{\nu\rightarrow\infty}|y_{1,\nu}|=\infty$;

2) $\limsup_{\nu\rightarrow\infty}|y_{1,\nu}|<\infty$.

In the case 1) consider the following  sets in $\R^d$;
	\begin{equation}\label{dfEk}
	E_k:=(l_{2k-1},\,m_{k+1}]\times\R\times\dots\times\R.
	\end{equation}
	In view of \eqref{dflength}, for any $\nu$ the number $n_\nu$ of the
	sets $E_k\; (k\in (k_\nu,\;k_\nu+n_\nu-1) )$, intersecting the ball $B_r(\by_\nu)$, is finite and moreover: $N=\sup_{\,\nu\in\N}n_\nu<\infty$ and $k_\nu\rightarrow\infty$ as $\nu\rightarrow\infty$. Let as take
	$F\in\mathcal{M}_{\tilde\gamma(r)}(\by_\nu,r)$. Then
	\begin{equation*}
	\mathrm{mes}_d(B_r(\by_\nu)\setminus F)\ge
	(1-\tilde\gamma(r))\mathrm{mes}_d(B_r(0)).
	\end{equation*}
	 Taking into account
	\eqref{dflength}, \eqref{dfVx} and \eqref{dfEk} and the condition
	$U(x)>0$, we get:
	\begin{eqnarray}
		&&\int_{B_r(\by_\nu)\setminus F}W(\e)\,
		\mathrm{d}\e\ge\int_{(B_r(\by_\nu)\setminus F)\setminus\bigcup
			_{k=k_\nu}^{k_\nu+n_\nu-1}E_k}V(x_1)\, \mathrm{d}\e+\nonumber\\
		&&\int_{(B_r(\by_\nu)\setminus F)\cap\big(\bigcup
			_{k=k_\nu}^{k_\nu+n_\nu-1}E_k\big)}V(x_1)\, \mathrm{d}\e\ge\alpha_{k_\nu}\Big((1-\tilde\gamma(r))\mathrm{mes}_d(B_r(0))-\nonumber\\
		&&(2r)^{d-1}N\pi\frac{1}{\sqrt{\alpha_{k_\nu}}}\Big)-(2r)^{d-1}N\pi\sqrt{\alpha_{k_\nu}}.
		\nonumber
	\end{eqnarray}
	This estimate and \eqref{cndalfn} imply \eqref{cndmeasinstcap1}. In the case 2) 
	property \eqref{cndmeasinstcap1} follows easily from conditions \eqref{ynuinfty} and \eqref{Uxinfty}.
\end{example}

\begin{example}\label{ex3}
We will construct the example, promised in Remark \ref{remcndsembound}. We take as the domains $\mathcal{G}_{r_0}(\by)$ the balls $B_{r_0}(\by)$ and in each such a ball consider the nonhomogeneous $d^\prime$-Laplace equation \eqref{dLapleq}  with the Neumann boundary condition \eqref{neumboundcnd1} and a radial function $W^{(\by)}\e)=\tilde W^{(\by)}(|\e-\by|)$. Recall that $d^\prime=d/(d-1)$. First consider the boundary problem \eqref{dLapleq}-\eqref{neumboundcnd1} with $\by=0$. It has the form:
\begin{equation}\label{dLapleq1}
-\frac{d}{dr}\Big(\Big|\frac{du}{dr}\Big|^{d^\prime-2}\frac{du}{dr}\Big)-\frac{d-1}{r}\Big(\Big|\frac{du}{dr}\Big|^{d^\prime-2}\frac{du}{dr}\Big)=d^{1/(d-1)}\tilde W^{(0)}(r),
\end{equation}
\begin{equation}\label{dLapleq2}
\Big|\frac{du}{dr}\Big|^{d^\prime-2}\frac{du}{dr}\vert_{r=r_0}=0.
\end{equation}

  Then after the change of variable $v(r)=\Big|\frac{du}{dr}\Big|^{d^\prime-2}\frac{du}{dr}=(u^\prime(r))^{\langle d^\prime-1\rangle}$ equation \eqref{dLapleq1} becomes linear
 \begin{equation}\label{radODE}
 v^\prime+\frac{d-1}{r}v=-\tilde W^{(0)}(r)
 \end{equation}
 with the boundary condition 
 \begin{equation}\label{radbounond}
 v(r_0)=0.
 \end{equation}
 Here we denote $a^{\langle\alpha\rangle}=\mathrm{sign}(a)|a|^\alpha$. Since equation \eqref{radODE} is singular at the point $r=0$, we should add the regularity condition for $v(r)$ at this point. It is easy to check that the general solution of corresponding homogeneous equation is $z(r)=\frac{C}{r^{d-1}}$. Searching for solution of equation \eqref{radODE} in the form $v(r)=\frac{C(r)}{r^{d-1}}$, we obtain, that its general solution is $v(r)=\frac{C}{r^{d-1}}-\int_0^r\tilde W^{(0)}(\rho)\rho^{d-1}\,\mathrm{d}\rho$. We see that $v(r)$ is a regular solution of problem \eqref{radODE}-\eqref{radbounond} if and only if $C=0$ and
 \begin{equation*}
  \int_0^{r_0}\tilde W^{(0)}(\rho)\rho^{d-1}\,\mathrm{d}\rho=\frac{1}{\omega_d}\int_{B_{r_0}(0)}W^{(0)}(\e)\,\mathrm{d}\e=0.
 \end{equation*}
Hence we have the following solution of this problem:
  \begin{equation*}
  v(r)=
  -d^{1/(d-1)}\int_0^r\tilde W^{(0)}(\rho)\rho^{d-1}\,\mathrm{d}\rho
  \end{equation*}
 Therefore the derivative of solution of the problem  \eqref{dLapleq1}-\eqref{dLapleq2} has the form:
 \begin{equation}\label{formderiv}
 u^\prime(r)=\Big(-d^{1/(d-1)}\int_0^r\tilde W^{(0)}(\rho)\rho^{d-1}\,\mathrm{d}\rho\Big)^{\langle 1/(d^\prime-1)\rangle}
 \end{equation}
 
Now let us define the potential $V(\e)$. To this end we need a $\rho$-covering of $\R^d$  by balls. 
For each $\vec\ell\in\Z^d$ consider the ball $B_{r_0}(\vec\ell)$. It is easy to see that if
\begin{equation}\label{cndr01}
 r_0>\sqrt{d}/2,
\end{equation}
 these balls form a $s$-covering for some $s>0.$.

  Now assume that the potential $V(\e)$ has the form $V(\e)=V_0(\e)+W(\e)$,  where
\begin{equation}\label{V0sqrtmodx}
V_0(\e)=\sqrt{|\e|},
\end{equation}
\begin{equation*}
\mathrm{supp}(W)\subset\bigcup_{\vec\ell\in d\cdot\Z^d} B_{\rho_0}(\vec\ell),
\end{equation*}
$\rho_0<r_0$, $W(\e)=\tilde W^{(\vec\ell)}(|\e-\vec\ell|)$ for any  $\vec\ell\in d\cdot\Z^d$ and $\e\in B_{r_0}(\vec\ell)$,
 and each function $\tilde W^{(\vec\ell)}(\rho)$ has the property $\int_0^{r_0} \tilde W^{(\vec\ell)}(\rho)\rho^{d-1}\,\mathrm{d}\rho=0$, hence
 \begin{equation*}
 \mathrm{E}_{\vec\ell,\,r_0}(W)=0,
 \end{equation*}
 i.e.,
 \begin{equation}\label{EVeqEV0}
 \mathrm{E}_{\vec\ell,\,r_0}(V)=V_{0,\,\vec\ell},
 \end{equation}
 where we denote
 \begin{equation}\label{denotEV0}
 V_{0,\,\vec\ell}=\mathrm{E}_{\vec\ell,\,r_0}(V_0).
 \end{equation}
 It is easy to see that, in view of definition \eqref{V0sqrtmodx}, 
\begin{equation}\label{V0ltendinfty}
\lim_{|\vec\ell|\rightarrow\infty}V_{0,\,\vec\ell}=\infty
\end{equation}
and
\begin{equation}\label{devV-0}
\lim_{|\vec\ell|\rightarrow\infty}\;\sup_{\e\in B_{r_0}((\vec\ell)|}|V_0(\e)-V_{0,\,\vec\ell}|=0.
\end{equation}
Then in view of the result \eqref{formderiv} and the equality 
\begin{equation*}
V(\e)-\mathrm{E}_{\vec\ell,\,r_0}(V)=V_0(\e)-V_{0,\,\vec\ell}+W(\e)
\end{equation*}
condition \eqref{cndpert2} is equivalent to
\begin{equation}\label{cndpert3}
\limsup_{|\vec\ell|\rightarrow\infty}\Big(\int_0^{r_0}\Big|\int_0^r \tilde W^{(\vec\ell)}(\rho)\rho^{d-1}\,\mathrm{d}\rho \Big|^d\,\mathrm{d}r\Big)^{1/d}<
\frac{1}{C(d)},
\end{equation}
If this condition is fulfilled, then in view of \eqref{EVeqEV0}-\eqref{V0ltendinfty} and Corollary \ref{thchicpert}, the spectrum of the operator $H=-\Delta+V(\e)\cdot$ is discrete and bounded below. Let us choose  $\tilde W^{(\ell)}$ satisfying \eqref{cndpert3},  so that for $V(\e)$  condition \eqref{condbndblc} of Theorem \ref{thgenermolch1} is not fulfilled.
 We put:
\begin{eqnarray}\label{defWell}
\tilde W^{(\vec\ell)}(\rho)=\left\{\begin{array}{ll}
0&\mathrm{for}\quad \rho\in[0,\, 2\pi\psi_{\vec\ell})\cup(\rho_0,\, r_0]\\\\
\frac{A_{\vec\ell}\,\mathrm{sign}(\sin(\psi_{\vec\ell}\rho))}{\rho^{d-1}}&\mathrm{for}\quad \rho\in[2\pi/\psi_{\vec\ell}\,,\rho_0],
\end{array}
\right.
\end{eqnarray}
where $\psi_{\vec\ell}=2\pi|\vec\ell|/\rho_0$ and  a positive sequence $A_{\vec\ell}$ satisfies the condition 
\begin{equation}\label{Alinfty}
\lim_{|\vec\ell|\rightarrow\infty}A_{\vec\ell}=\infty .
\end{equation}
It will be chosen in what follows.
 In order such a definition of $W(\e)$ on the balls $B_{r_0}(\vec\ell)$ will be consistent at their intersections we need the condition
\begin{equation}
\forall\; \vec\ell_0\in \Z^d:\quad B_{\rho_0}(\vec\ell_0)\cap\bigcup_{\vec\ell\in\Z^d\setminus\{\vec\ell_0\}} B_{r_0}(\vec\ell)=\emptyset, 
\end{equation}
which is fulfilled if  $\sqrt{d}/2<1$, i.e. $d<4$, and 
\begin{equation}\label{cndr92}
\rho_0<1-\sqrt{d}/2.
\end{equation}
Hence we have to assume that $d=3$\footnote{Perhaps a more complicated construction of the desired covering of $\R^d$ by balls is also possible for $d>3$, but in this example we restrict ourselves to this special case.}.

Let us calculate
\begin{eqnarray}
&&\hskip-8mm\int_0^r \tilde W^{(\vec\ell)}(\rho)\rho^{d-1}\,\mathrm{d}\rho=A_{\vec\ell}\int_{2\pi/\psi_{\vec\ell}}^{r}
\mathrm{sign}(\sin(\psi_{\vec\ell}\rho))\,\mathrm{d}\rho=\nonumber\\
&&\hskip-8mm\frac{A_{\vec\ell}}{\psi_{\vec\ell}}\int_{2\pi}^{2\pi|\vec\ell| r/\rho_0}\mathrm{sign}(\sin t)\,\mathrm{d}t=
\frac{A_{\vec\ell}}{\psi_{\vec\ell}}\int_{2\pi[|\vec\ell| r/\rho_0]}^{2\pi|\vec\ell| r/\rho_0}
\mathrm{sign}(\sin t)\,\mathrm{d}t\nonumber.
\end{eqnarray}
Therefore
\begin{equation*}
\Big|\int_0^r \tilde W^{(\vec\ell)}(\rho)\rho^{d-1}\,\mathrm{d}\rho\Big|\le \frac{A_{\vec\ell}\,\rho_0}{|\vec\ell|}.
\end{equation*}
This means that if the condition
\begin{equation}\label{cxndAell}
\lim_{|\vec\ell|\rightarrow\infty}\frac{A_{\vec\ell}}{|\vec\ell|}=0
\end{equation}
is satisfied, then condition \eqref{cndpert3} is fulfilled. Let us put
\begin{equation}\label{defVell}
A_{\vec\ell}=V_{0,\,\vec\ell}\rho_0^{d-1}.
\end{equation}
In view of \eqref{devV-0}, it is enough to show that $V_{0,\,\vec\ell}+\tilde W^{(\vec\ell)}$ does not satisfy condition \eqref{condbndblc}. To this end let us find the function $(V_{0,\,\vec\ell}+\tilde W^{(\vec\ell)})_-$. Taking into account \eqref{defWell}, consider the sets
\begin{eqnarray}
&&\mathcal{E}_{\vec\ell}=\{\rho\in[0,\,\rho_0]:\;V_{0,\,\vec\ell}+\tilde W^{(\vec\ell)}(\rho) <0 \}=\nonumber\\
&&\Big\{\rho\in[0,\,\rho_0]:\frac{\mathrm{sign}(\sin(\psi_{\vec\ell}\rho))}{\rho^{d-1}} <-\frac{V_{0,\,\vec\ell}}{A_{\vec\ell}} \Big\}\supseteq
\nonumber\\
&&\bigcup_{k=2}^{\Theta(\vec\ell)}\Big(\frac{\rho_0}{|\vec\ell|}(k-1/2),\,\frac{\rho_0}{|\vec\ell|}k\Big),
\nonumber
\end{eqnarray}
where $\Theta(\vec\ell)=[|\vec\ell|]$.   
Then  in view of \eqref{defVell},
\begin{eqnarray}
&&\int_0^{r_0}(V_{0,\,\vec\ell}+\tilde W^{(\vec\ell)}(\rho))_-^{d/2}\rho^{d-1}\,\mathrm{d}\rho=
-\int_{\mathcal{E}_{\vec\ell}}(V_{0,\,\vec\ell}+\tilde W^{(\vec\ell)}(\rho))^{d/2}\rho^{d-1}\,\mathrm{d}\rho\ge\nonumber\\
&&A_{\vec\ell}^{d/2}\sum_{k=2}^{\Theta(\vec\ell)}\int_{\frac{\rho_0}{|\vec\ell|}(k-1/2}^{\frac{\rho_0}{|\vec\ell|}k}
\Big(\frac{1}{\rho^{d-1}}-\frac{1}{\rho_0^{d-1}}\Big)^{d/2}\rho^{d-1}\,\mathrm{d}\rho\ge
\nonumber\\
&&\frac{A_{\vec\ell}^{d/2}}{\rho_0^{(d-1)(d/2-1)}}\sum_{k=2}^{\Theta(\vec\ell)}\int_{\frac{\rho_0}{|\vec\ell|}(k-1/2)}^{\frac{\rho_0}{|\vec\ell|}k}
\Big(1-\rho^{d-1}/\rho_0^{d-1}\Big)^{d/2}\,\mathrm{d}\rho\ge
\nonumber\\
&&
\frac{A_{\vec\ell}^{d/2}\rho_0}{2\rho_0^{(d-1)(d/2-1)}}\sum_{k=2}^{\Theta(\vec\ell)}\Big(
\Big({1-\Big(\frac{k}{|\vec\ell|}}\Big)^{d-1}\Big)^{d/2}\frac{1}{|\vec\ell|}.
\nonumber
\end{eqnarray}
Since the last sum tends to 
\begin{equation*}
\int_0^{1}\Big(1-t^{d-1}\Big)^{d/2}\,\mathrm{d}t
\end{equation*}
as $|\vec\ell|\rightarrow\infty$, then in view of \eqref{Alinfty} 
\begin{equation*}
\lim_{|\vec\ell|\rightarrow\infty}\int_0^{r_0}(V_{0,\,\vec\ell}+\tilde W^{(\vec\ell)}(\rho))_-^{d/2}\rho^{d-1}\,\mathrm{d}\rho=\infty.
\end{equation*}
Thus, condition \eqref{condbndblc} is not satisfied for the potential $V(\e)$.
\end{example}
	
\appendix

\section{Divergence constrained transportation problem}
\label{sec:transprob}

In \cite{Br-Pet}, \cite{Br-Car-San} the following optimization problem is considered. Let
$\Omega\subset\R^d$ be a bounded domain with a smooth boundary and $W\in L_1(\Omega)$. 
Let $H:\;\R^d\rightarrow\R$ be a function, satisfying the conditions:

(i) $H$ is strictly convex radially symmetric function, with $H(0)=0$ and it is differentiable in $\R^d\setminus\{0\}$;

(ii) for some $p\in(1,\infty)$ and positive constants $a,\;b$
\begin{equation*}
a|\sigma|^p\le H(\sigma)\le b(|\sigma|^p+1);
\end{equation*}

(iii) for some positive constant $c$
\begin{equation*}
|\nabla H(\sigma)|\le c(|\sigma|^{p-1}+1),\quad \sigma\in\R^d\setminus\{0\}.  
\end{equation*}
Denote by $\mathcal{D}_p(\Omega,W)$ the set of all vector fields $\vec F\in L_p(\Omega,\R^d)$ on
$\Omega$ satisfying (in the generalized sense) the equation
$\mathrm{div}\,\vec F=W$ and the boundary condition $\vec F(x)\cdot\vec n(x)\vert_{x\in\partial \Omega}=0$. 
Consider the quantity
\begin{equation}\label{optprobtrans}
D_p(\Omega,W)=\inf_{\vec F\in\mathcal{D}_p(\Omega,W)}  H(\vec F).
\end{equation}
Let $H^\star$ be Legendre transformation of $H$.
In \cite[Theorem 2.1]{Br-Car-San}, \cite[Remark 3.5]{Br-Pet} the following claim
was established:
\begin{proposition}\label{prtranspr}
	If
	\begin{equation*}
	\int_\Omega W(\e)\,\mathrm{d}\e=0, 
	\end{equation*}
	and $q=p/(p-1)$, then there exists a unique minimizer for \eqref{optprobtrans}, having the form
	\begin{equation*}
	\vec F=\nabla H^\star(\nabla u),
	\end{equation*}
	where $u\in W_q^1(\Omega)$ is a weak solution of the nonhomogeneous $q$-Laplace equation
	\begin{equation}\label{pLapleq}
	\mathrm{div}\big(\nabla H^\star(\nabla u)\big)=W(\e)
	\end{equation}
	which satisfies the Neumann boundary condition
	\begin{equation}\label{neumboundcnd2}
	\nabla H^\star(\nabla u)\cdot\vec n \vert_{\partial\Omega M}=0
	\end{equation}
and this solution is unique (up to a constant summand) in $W_q^1(\Omega)$.
\end{proposition}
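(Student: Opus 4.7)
The plan is to attack the problem via convex duality in the spirit of Fenchel--Rockafellar. First I would recast the constrained problem as the minimization of the strictly convex, coercive functional $\vec F\mapsto\int_\Omega H(\vec F(\e))\,\mathrm{d}\e$ over the affine set $\mathcal{D}_p(\Omega,W)\subset L_p(\Omega,\R^d)$; the compatibility hypothesis $\int_\Omega W\,\mathrm{d}\e=0$ is what makes this set nonempty (any feasible field can be produced, for instance, by solving an auxiliary Poisson--Neumann problem). Existence of a primal minimizer then follows from the direct method: the lower bound $H(\sigma)\ge a|\sigma|^p$ from hypothesis (ii) gives $L_p$-coercivity, and convexity of $H$ combined with Fatou produces weak lower semicontinuity on $L_p(\Omega,\R^d)$. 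Strict convexity of $H$ yields uniqueness of the primal minimizer $\vec F$.

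Next I would dualize the divergence constraint by a multiplier $u$ and form
\begin{equation*}
L(\vec F,u)=\int_\Omega H(\vec F)\,\mathrm{d}\e-\int_\Omega u\bigl(\mathrm{div}\,\vec F-W\bigr)\,\mathrm{d}\e.
\end{equation*}
Integrating by parts and using the no-flux boundary condition $\vec F\cdot\vec n=0$ on $\partial\Omega$ removes the boundary term and yields
\begin{equation*}
L(\vec F,u)=\int_\Omega\bigl(H(\vec F)+\nabla u\cdot\vec F\bigr)\,\mathrm{d}\e+\int_\Omega uW\,\mathrm{d}\e.
\end{equation*}
Pointwise minimization in $\vec F$ through the Legendre transform identifies the extremal as $\vec F=\nabla H^\star(-\nabla u)$; replacing $u$ by $-u$ absorbs the sign and gives the announced form $\vec F=\nabla H^\star(\nabla u)$. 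Substituting this into the divergence constraint produces precisely the nonhomogeneous $q$-Laplace equation \eqref{pLapleq}, and $\vec F\cdot\vec n=0$ on $\partial\Omega$ translates into the Neumann condition \eqref{neumboundcnd2}.

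Existence and uniqueness of $u$ then follow from classical monotone-operator theory: the operator $u\mapsto-\mathrm{div}\,\nabla H^\star(\nabla u)$ is strictly monotone and coercive on $W_q^1(\Omega)/\R$ (the quotient is natural since only $\nabla u$ appears), and $\int_\Omega W\,\mathrm{d}\e=0$ is exactly the compatibility condition for solvability of the Neumann problem. The growth estimate dual to (iii), valid because $H$ has power-type growth, ensures $\nabla H^\star(\nabla u)\in L_p(\Omega,\R^d)$, so the candidate $\vec F=\nabla H^\star(\nabla u)$ indeed lies in $\mathcal{D}_p(\Omega,W)$.

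The step I expect to require the most care, and which I view as the main obstacle, is strong duality: showing that the constructed $\vec F$ actually realizes the primal infimum and not merely a critical point. I would secure this by verifying the Fenchel--Rockafellar qualification conditions (continuity of the linear constraint $\vec F\mapsto\mathrm{div}\,\vec F-W$ from $L_p(\Omega,\R^d)$ into $(W_q^1(\Omega))^*$, nonemptiness of the primal feasible set with a point in the relative interior of the effective domain, and coercivity of both functionals) to rule out a duality gap. Once strong duality is in hand, the KKT system is exactly the coupled pair $\vec F=\nabla H^\star(\nabla u)$ with $\mathrm{div}\,\vec F=W$ and Neumann boundary condition, identifying the unique primal optimizer through $u$, which is itself unique modulo additive constants by strict convexity of the dual functional.
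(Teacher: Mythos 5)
The paper supplies no proof of this proposition; it is stated with a citation to \cite[Theorem 2.1]{Br-Car-San} and \cite[Remark 3.5]{Br-Pet}, whose argument is exactly the convex-duality scheme you sketch: direct method plus strict convexity for existence and uniqueness of the primal $\vec F$, pointwise Legendre optimality yielding $\vec F=\nabla H^\star(\nabla u)$, and Fenchel--Rockafellar strong duality identifying $u$ as the weak Neumann solution of \eqref{pLapleq}--\eqref{neumboundcnd2}, unique in $W_q^1(\Omega)$ up to constants. Your outline is correct and matches that route; the two places where it remains a sketch are the two you yourself flag, namely ruling out a duality gap via a qualification condition, and checking that $\mathcal{D}_p(\Omega,W)\neq\emptyset$, which requires $W$ to be regular enough that the gradient of the auxiliary Neumann--Poisson solution lies in $L_p(\Omega,\R^d)$ and not merely that $\int_\Omega W\,\mathrm{d}\e=0$.
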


\begin{remark}\label{remtrasport}
In the case where $H(\sigma)=|\sigma|^p$ 
\begin{equation*}
H^\star(\sigma)=(1-1/p)(1/p)^{p-1} |\sigma|^q,
\end{equation*}
hence
\begin{equation*}
\nabla H^\star(\sigma)=\frac{1}{p^{1/(p-1)}}|\sigma|^{q-2}\sigma.
\end{equation*}
Therefore the minimizer for \eqref{optprobtrans} is
\begin{equation*}
\vec F=\frac{1}{p^{1/(p-1)}}|\nabla u|^{q-2}\nabla u
\end{equation*}
and the problem  \eqref{pLapleq}-\eqref{neumboundcnd2} has the form:
\begin{equation*}
\mathrm{div}(|\nabla u|^{q-2}\nabla u)=p^{1/(p-1)}W(\e),\quad |\nabla u|^{q-2}\nabla u\cdot\vec n\vert_{\partial\Omega}=0.
\end{equation*}
\end{remark}
	
	\section{Integral inequalities of Riccati type}\label{sec:intineqRicc} 

		\subsection{\hskip2mm Formulation of the claim}
	\label{subsec:appendformul}
	
	Let $\{u_n(t)\}_{n=1}^\infty$ be a sequence of continuous
	real-valued functions, defined on $(0,r)\,(r>0)$. Consider the
	following sequence of subsets of the triangle
	$\Delta_r=\{\langle s,t\rangle\in\R^2:\;0< s<t< r)\}$:
	\begin{eqnarray}\label{defEn}
	E_n=\{\langle s,t\rangle\in\Delta_r:\;u_n(t)-u_n(s)>\int\limits_s\limits^tu_n^2(\xi)\,
	d\xi+ \lambda_n(t-s)\},
	\end{eqnarray}
	where $\{\lambda_n\}_{n=1}^\infty$ is a sequence of real numbers.
	
	The main claim of this section is following:
	\begin{proposition}\label{mainprAp2}
		If $\lambda_n\rightarrow+\infty$ for $n\rightarrow\infty$, then
		$\lim\limits_{n\rightarrow\infty}\mathrm{mes}_2E_n=0$.
	\end{proposition}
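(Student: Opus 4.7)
The plan is to slice $E_n$ vertically and apply a Riccati ODE blow-up argument to each slice. For fixed $s \in (0,r)$, define $T_n(s) := \{t \in (s,r) : (s,t) \in E_n\}$, so that by Fubini $\mathrm{mes}_2 E_n = \int_0^r \mathrm{mes}_1 T_n(s)\,ds$. I would then introduce the comparison function
\[
G_s(t) := u_n(s) + \int_s^t u_n^2(\xi)\,d\xi + \lambda_n(t-s),
\]
which is continuous and non-decreasing with $G_s(s) = u_n(s)$ and $G_s'(t) = u_n^2(t) + \lambda_n \ge \lambda_n$. Membership $(s,t) \in E_n$ then reads exactly $u_n(t) > G_s(t)$, reducing the problem to an upper bound on $\mathrm{mes}_1 T_n(s)$.

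The heart of the argument occurs on the Riccati phase $T_n(s) \cap \{G_s \ge 0\}$: there the strict inequality $u_n(t) > G_s(t) \ge 0$ upgrades to $u_n^2(t) > G_s^2(t)$, producing the differential inequality $G_s'(t) > G_s^2(t) + \lambda_n$ on this set. I would then parameterize along the set by $\sigma(t) := \mathrm{mes}_1(T_n(s) \cap \{G_s \ge 0\} \cap [s,t])$; the induced function $H(\sigma) := G_s(t(\sigma))$ is non-decreasing (with upward jumps when the parameterization pauses on the complement, since $G_s$ still grows there) and satisfies $dH/d\sigma \ge H^2 + \lambda_n$ starting from value $0$. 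Since $u_n$, hence $G_s$, remains finite on $(s,r)$, comparison with the explicit Riccati solution $\sqrt{\lambda_n}\tan(\sqrt{\lambda_n}\sigma)$, which blows up at $\sigma = \pi/(2\sqrt{\lambda_n})$, yields
\[
\mathrm{mes}_1\bigl(T_n(s) \cap \{G_s \ge 0\}\bigr) \le \pi/(2\sqrt{\lambda_n}).
\]

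The main obstacle is to control the pre-Riccati zone $T_n(s) \cap \{G_s < 0\}$, which is contained in an interval $[s, \tau_s]$ with $\tau_s$ the first zero of $G_s$. A crude estimate based only on $G_s' \ge \lambda_n$ gives $\tau_s - s \le (u_n(s))_-/\lambda_n$, and this leads to $\mathrm{mes}_2 E_n \le \lambda_n^{-1}\int_0^r (u_n(s))_-\,ds + \pi r/(2\sqrt{\lambda_n})$, whose first term is \emph{not} a priori controlled since $(u_n)_-$ can be unbounded in $n$. To close the argument I would exploit the exact identity $\int_s^{\tau_s}(u_n^2 + \lambda_n)\,dt = (u_n(s))_-$ together with the AM--GM bound $u_n^2 + \lambda_n \ge 2|u_n|\sqrt{\lambda_n}$, which yields the $L^1$ estimate $\int_s^{\tau_s}|u_n|\,dt \le (u_n(s))_-/(2\sqrt{\lambda_n})$. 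The continuity of $u_n$ at $s$ then converts this into an effective pre-Riccati length of order $1/\sqrt{\lambda_n}$: either $u_n$ stays close to $u_n(s)$ on $[s,\tau_s]$, so the $L^1$ bound directly forces $\tau_s - s \le 1/\sqrt{\lambda_n}$, or a short case analysis shows that any strong oscillation forces $\int u_n^2$ to exhaust the budget $(u_n(s))_-$ on a much shorter interval than $(u_n(s))_-/\lambda_n$. This refined pre-Riccati bookkeeping is precisely the integral Riccati-inequality technique imported from the author's one-dimensional paper \cite{Zel} which the present appendix systematizes. Combining both phases and integrating in $s$ then yields $\mathrm{mes}_2 E_n = O(1/\sqrt{\lambda_n}) \to 0$, as claimed.
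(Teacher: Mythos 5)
Your Riccati-phase argument is sound and is essentially the content of the paper's Lemma~\ref{lmintineq}: on $T_n(s)\cap\{G_s\ge 0\}$ one has $G_s'(t)>G_s^2(t)+\lambda_n$, and reparameterizing by the accumulated measure $\sigma$ of that set, the resulting monotone function $H(\sigma)$ obeys an integral Riccati inequality whose explicit lower solution $\sqrt{\lambda_n}\tan(\sqrt{\lambda_n}\sigma)$ blows up at $\pi/(2\sqrt{\lambda_n})$; finiteness of $G_s$ then caps $\mathrm{mes}_1\bigl(T_n(s)\cap\{G_s\ge 0\}\bigr)$ by $\pi/(2\sqrt{\lambda_n})$.

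The gap is in the pre-Riccati phase: the dichotomy you propose is false, and there is no uniform-in-$s$ bound of order $1/\sqrt{\lambda_n}$ on $\mathrm{mes}_1\bigl(T_n(s)\cap\{G_s<0\}\bigr)$. Take $u_n\equiv -M$ on $[0,\delta]$, a linear ramp from $-M$ to $0$ on $[\delta,\delta+\epsilon]$, and $u_n\equiv 0$ on $[\delta+\epsilon,r]$, with $\epsilon\ll 1/M$ and $M\gg\sqrt{\lambda_n}$. For $s\in(\delta-\tfrac{1}{2M},\delta)$ one has $u_n(s)=-M$, the budget $\int_s^{\tau_s}(u_n^2+\lambda_n)=M$ is spent almost entirely on the $\lambda_n$-term, and $\tau_s-s\approx M/\lambda_n\gg 1/\sqrt{\lambda_n}$. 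Moreover, for every $t\in(\delta+\epsilon,\tau_s)$ we have $u_n(t)=0>G_s(t)$, so that whole segment lies in $T_n(s)$: the slice really has measure $\sim M/\lambda_n$. This is consistent with your $L^1$ bound $\int_s^{\tau_s}|u_n|\le M/(2\sqrt{\lambda_n})$ — the mass of $|u_n|$ sits on the short ramp — but it shows that the $L^1$ bound alone gives no control on $\tau_s-s$, and ``strong oscillation'' need not exhaust the $\int u_n^2$ budget: here $u_n$ simply parks at zero and lets the clock run. (The proposition survives only because such $s$ occupy a set of measure $\approx 1/M$, so the integrated contribution is $\approx 1/\lambda_n$; but that cancellation over $s$ is exactly what your argument fails to exhibit.)

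The paper does not attempt a uniform vertical-slice bound. It splits $E_n=E_n'\cup E_n^\star\cup E_n^{\star\star}$ according to the signs of $u_n$ at the relevant endpoints, and in each piece it relocates the starting point of the Riccati comparison to a point where the function is nonnegative, so that Lemma~\ref{lmintineq} applies directly: for $E_n'$ (where $u_n(t)\le 0$) it slices \emph{horizontally} in $s$ and uses $v(z)=-u_n(t-z)$; for $E_n^\star$ it pulls the sections back to those through zero-crossings $s_j$ of $u_n$ (Remark~\ref{remcuts}); for $E_n^{\star\star}$ the left endpoints $\tilde l_n$ of the vertical slice automatically satisfy $u_n(\tilde l_n)\ge 0$ and $\alpha_n(\tilde l_n)=0$. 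To repair your proof you would need an analogous mechanism for moving the Riccati starting point from $s$ to a nearby level crossing of $G_s$ or of $u_n$; the crude estimate and the AM--GM trick do not supply it.
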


	\subsection{\hskip2mm Lemma on integral inequalities}
	\label{subsec:mainlemintineq}
	
	The claim formulated above is based on the following lemma:
	\begin{lemma}\label{lmintineq}
		Let  $\{u_n(t)\}_{n=1}^\infty$ be a sequence of measurable bounded
		real-valued functions such that each of them is defined on
		$[t_n,r]\,(0\le t_n<r)$, and $X_n\subseteq [t_n,r]$ be a sequence of
		measurable sets. Suppose that for some sequence of real numbers
		$\lambda_n\rightarrow+\infty$ for $n\rightarrow\infty$ the
		inequalities are valid
		\begin{equation}\label{intineq}
		u_n(t)\ge\int_{t_n}^tu_n^2(\xi)\, \mathrm{d}\xi+
		\lambda_n(t-t_n)\quad \mathrm{for}\quad t\in X_n\quad (n=1,2,\dots).
		\end{equation}
		Then $\mathrm{mes}_1 X_n\to 0$ for $n\rightarrow\infty$.
	\end{lemma}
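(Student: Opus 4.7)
My plan is to convert the given integral inequality into a one-sided Riccati-type differential inequality for an auxiliary absolutely continuous function, and then exploit the fact that a positive function satisfying $y' \ge y^2$ controls its own forward lifetime through the blow-up of $-1/y$.

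Specifically, I set
\[
B_n(t) := \int_{t_n}^t u_n^2(\xi)\, \mathrm{d}\xi + \lambda_n(t - t_n),
\]
which is absolutely continuous on $[t_n, r]$ with $B_n(t_n) = 0$ and $B_n'(t) = u_n^2(t) + \lambda_n$ a.e. Once $\lambda_n > 0$, $B_n$ is nonnegative and strictly increasing. The hypothesis \eqref{intineq} then reads $u_n(t) \ge B_n(t) \ge 0$ on $X_n$, so squaring gives $u_n^2(t) \ge B_n^2(t)$ a.e.\ on $X_n$, and hence
\[
B_n'(t) \ge B_n^2(t) \text{ a.e.\ on } X_n, \qquad B_n'(t) \ge 0 \text{ a.e.\ on } [t_n, r].
\]

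For the measure estimate I pick a sequence $\epsilon_n \to 0^+$ with $\lambda_n \epsilon_n \to \infty$, say $\epsilon_n = \lambda_n^{-1/4}$. If $t_n + \epsilon_n > r$ then $\mathrm{mes}_1 X_n \le r - t_n \le \epsilon_n \to 0$ and we are done; otherwise set $a_n := t_n + \epsilon_n$, so that $B_n(a_n) \ge \lambda_n \epsilon_n$. The function $\phi_n(t) := -1/B_n(t)$ is absolutely continuous on $[a_n, r]$ with $\phi_n'(t) = B_n'(t)/B_n^2(t)$; by the two inequalities above, $\phi_n'(t) \ge 1$ a.e.\ on $X_n \cap [a_n, r]$ and $\phi_n'(t) \ge 0$ a.e.\ on $[a_n, r]$. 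Integrating yields
\[
\mathrm{mes}_1(X_n \cap [a_n, r]) \le \int_{a_n}^r \phi_n'(t)\, \mathrm{d}t = \phi_n(r) - \phi_n(a_n) \le \frac{1}{B_n(a_n)} \le \frac{1}{\lambda_n \epsilon_n},
\]
which combined with the trivial $\mathrm{mes}_1(X_n \cap [t_n, a_n]) \le \epsilon_n$ gives $\mathrm{mes}_1 X_n \le \epsilon_n + (\lambda_n \epsilon_n)^{-1} \to 0$.

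The main obstacle I expect is exactly the point where the Riccati inequality $B_n' \ge B_n^2$ holds only on the set $X_n$, which may have no useful topological structure, so one cannot directly invoke a classical ODE comparison on a subinterval where blow-up would occur. The saving observation is that $B_n' \ge 0$ on the complement, which keeps $\phi_n = -1/B_n$ non-decreasing there; this lets the one-sided integral bound $\int_{a_n}^r \phi_n'\, \mathrm{d}t \le 1/B_n(a_n)$ control $\mathrm{mes}_1(X_n \cap [a_n, r])$ regardless of how $X_n$ is distributed, and the choice of $\epsilon_n$ merely balances the two partial estimates. The boundedness of $u_n$ is used only to guarantee that $B_n$ is finite and absolutely continuous and thus that $\phi_n$ behaves as expected; no quantitative control by $\|u_n\|_\infty$ enters the final bound.
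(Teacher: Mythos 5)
Your proof is correct and takes a genuinely different route from the paper's. The paper argues by contradiction: assuming $\mathrm{mes}_1 X_n$ is bounded below along a subsequence, it extracts a closed subset, performs the change of variables $z=z_n(t)=\int_{t_n}^t\chi_n(\xi)\,\mathrm{d}\xi$ to ``compress'' $X_n$ onto an interval $[0,b_n)$, then compares the transplanted inequality with the explicit blow-up solution $w_n(z)=\sqrt{\lambda_n}\tan(\sqrt{\lambda_n}\,z)$ of the Riccati integral equation, invoking a Gronwall-type comparison (Proposition~\ref{prlinintineq}) to force $v_n\ge w_n$ and contradict boundedness. You instead work directly with the right-hand side $B_n$, observe that it itself satisfies the Riccati-type differential inequality $B_n'\ge B_n^2$ a.e.\ on $X_n$ (and $B_n'\ge 0$ everywhere), pass to the increasing absolutely continuous function $\phi_n=-1/B_n$, and integrate its derivative over $[a_n,r]$ to bound $\mathrm{mes}_1(X_n\cap[a_n,r])$ by $1/B_n(a_n)\le(\lambda_n\epsilon_n)^{-1}$. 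Both proofs exploit the same Riccati blow-up phenomenon, but through different lenses: the paper through comparison with an exact blowing-up solution, you through the standard substitution $y\mapsto -1/y$. Your argument is shorter, avoids both the contradiction framework and the closed-set/change-of-variables machinery (which is the delicate part of the paper's proof), and yields a clean quantitative bound $\mathrm{mes}_1 X_n\le\epsilon_n+(\lambda_n\epsilon_n)^{-1}$, optimized to $O(\lambda_n^{-1/2})$; the paper's version has the minor virtue of reusing the linear integral inequality lemma already stated. One small point you handle correctly but should keep explicit in a write-up: $\phi_n$ is only absolutely continuous on $[a_n,r]$ because $B_n$ is bounded away from zero there, which is precisely why you need to discard the initial piece $[t_n,a_n]$ of length $\epsilon_n$ and pay for it separately.
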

	\begin{proof}
		Assume on the contrary that for some subsequence $X_n\;$
		$\mathrm{mes}_1 X_n\ge c_1>0$. Then there exists a sequence of
		closed sets $X_n^\prime\subseteq X_n$ such that $\mathrm{mes}_1
		X_n^\prime\ge c_2>0$. Let $\chi_n(t)$ be the characteristic function
		of $X_n^\prime$. Consider the function
		$z_n(t)=\int_{t_n}^t\chi_n(\xi)\,\mathrm{d}\xi$. Let
		$X_n^{\prime\prime}$ be the set of points of growth of $z_n(t)$. It
		is clear that $X_n^{\prime\prime}$ is a closed perfect set and
		$X_n^{\prime\prime}\subseteq X_n^\prime$. Removing from the set
		$X_n^{\prime\prime}$ the left endpoints of the open intervals
		adjacent to it, we get a set $X_n^{\prime\prime\prime}\subset
		X_n^{\prime\prime}$ with
		$\mathrm{mes}_1X_n^{\prime\prime\prime}=\mathrm{mes}_1X_n^{\prime\prime}$.
		It is easy to see that the function $z_n(t)$ maps bijectively the
		set $X_n^{\prime\prime\prime}$ on an interval $[0,b_n)$ with
		$b_n=\mathrm{mes}_1X_n^{\prime\prime\prime}>c_2>0$. Hence on
		$X_n^{\prime\prime\prime}$ the function $z=z_n(t)$ has inverse
		$t=t_n(z)$. Let us choose $N_1>0$ such that $\lambda_n\ge 0$ for
		$n\ge N_1$. Then we get from (\ref{intineq}) for $n\ge N_1$:
		\begin{equation*}
		u_n(t)\ge\int_{t_n}^tu_n^2(\xi)\chi_n(\xi)\,\mathrm{d}\xi+
		\lambda_n\int_{t_n}^t\chi_n(\xi)\,\mathrm{d}\xi=\int_{t_n}^tu_n^2(\xi)\,\mathrm{d}z_n(\xi)+
		\lambda_n z_n(t)
		\end{equation*}
		for $t\in X_n^{\prime\prime\prime}$. Performing the change of
		variables $z=z_n(t)$, $v_n(z)=u_n(t_n(z))$, we get:
		\begin{equation}\label{afterchangvar}
		v_n(z)\ge\int_0^zv_n^2(\eta)\,\mathrm{d}\eta+\lambda_nz\quad\mathrm{for}\quad
		0\le z<b_n.
		\end{equation}
		As it is easy to see, the function $v_n(z)$ is measurable and
		bounded. Furthermore, (\ref{afterchangvar}) implies that there is
		$N_2\ge N_1$ such that $v_n(z)\ge 0$ on $[0,b_n)$ for any $n\ge
		N_2$. Consider the integral equation
		\begin{equation}\label{Riccinteq}
		w_n(z)=\int_0^zw_n^2(\eta)\,\mathrm{d}\eta+\lambda_nz,
		\end{equation}
		whose solution is $w_n(z)=\sqrt{\lambda_n}\tan(\sqrt{\lambda_n}z)$.
		We see that $w_n(z)\to+\infty$ for $z\uparrow
		z_n=\frac{\pi}{2\sqrt{\lambda_n}}$ and $w_n(z)\ge 0$ on $[0,z_n]$.
		Since $z_n\to 0$ for $n\to\infty$ and $b_n>c_2>0$, then there is
		$N_3\ge N_2$ such that $[0,z_n]\subset [0,b_n)$ for any $n\ge N_3$.
		Subtracting the equality~\eqref{Riccinteq} from the
		inequality~\eqref{afterchangvar}, we get the inequality
		\begin{equation*}
		v_n(z)-w_n(z)\ge\int_0^z(v_n(\eta)+w_n(\eta))(v_n(\eta)-w_n(\eta))\,\mathrm{d}\eta,\quad
		z\in[0,z_n],
		\end{equation*}
		which implies, in view of Proposition~\ref{prlinintineq}, that
		$v_n(z)\ge w_n(z)$ for $z\in[0,z_n]$. The last inequality
		contradicts the boundedness of $v_n(z)$.
	\end{proof}
	
	In the proof of the previous lemma we have used the following well
	known claim on linear integral inequalities (see \cite{Az-Ts}).
	
	\begin{proposition}\label{prlinintineq}
		Let $u(t)$ be a measurable bounded real-valued function, defined on
		$[0,a]\,(a>0)$, and $K(t,\xi)$ be a measurable bounded function,
		defined on the triangle $\Delta=\{(\xi,t)\in\R^2\,:\,0\le\xi\le t\le
		a\}$ such that $K(t,\xi)\ge 0$ on $\Delta$. Then the integral
		inequality
		\begin{equation*}
		u(t)\ge\int_0^tK(t,\xi)u(\xi)\,\mathrm{d}\xi,\quad t\in[0,a]
		\end{equation*}
	implies that $u(t)\ge 0$ on $[0,a]$.
	\end{proposition}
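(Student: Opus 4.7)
The plan is to reduce the statement to a standard Gronwall-type iteration by isolating the negative part of $u$. I would first set $M=\sup_{\Delta}K(t,\xi)$ (finite by assumption) and $v(t):=\max(0,-u(t))$, which is measurable and bounded on $[0,a]$, with $\|v\|_\infty$ finite since $u$ is bounded. The goal is to show $v\equiv 0$.

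Next I would derive a self-referential inequality for $v$. Starting from the hypothesis $u(t)\ge\int_0^tK(t,\xi)u(\xi)\,\mathrm{d}\xi$, multiply by $-1$ to get
\begin{equation*}
-u(t)\le\int_0^tK(t,\xi)\bigl(-u(\xi)\bigr)\,\mathrm{d}\xi.
\end{equation*}
Since $K\ge 0$ and $-u(\xi)\le v(\xi)$ pointwise, the right-hand side is bounded above by $\int_0^tK(t,\xi)v(\xi)\,\mathrm{d}\xi\ge 0$. Splitting into the cases $u(t)\ge 0$ (where $v(t)=0$) and $u(t)<0$ (where $v(t)=-u(t)$) gives in both cases
\begin{equation*}
v(t)\le\int_0^tK(t,\xi)v(\xi)\,\mathrm{d}\xi,\qquad t\in[0,a].
\end{equation*}

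Finally I would iterate this inequality. Using $K(t,\xi)\le M$ and inserting the bound into itself $n$ times, a straightforward induction on $n$ yields
\begin{equation*}
v(t)\le\|v\|_\infty\,\frac{M^n t^n}{n!},\qquad t\in[0,a],\ n\in\N.
\end{equation*}
Letting $n\to\infty$ forces $v(t)=0$ on $[0,a]$, i.e.\ $u(t)\ge 0$ everywhere, as claimed.

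I do not anticipate a serious obstacle: the only minor care points are (a) checking measurability and boundedness of $v$ so that the iterated integrals are well defined (which is immediate from the hypotheses on $u$ and $K$), and (b) verifying the induction step $\int_0^t K(t,\xi)\,\xi^n\,\mathrm{d}\xi\le M t^{n+1}/(n+1)$, which is a direct calculation. Since the proposition is explicitly labelled as well known and is cited as such in the paper, this iterative Gronwall-style argument is essentially the only natural route, and it should go through without modification.
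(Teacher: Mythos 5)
Your argument is correct and complete. The paper does not actually prove this proposition---it labels it ``well known'' and refers to Azbelev--Tsalyuk for the statement---so there is no in-paper proof to compare against, but the Gronwall-type iteration you use is exactly the standard route: your reduction to $v(t)\le\int_0^tK(t,\xi)v(\xi)\,\mathrm{d}\xi$ for $v=\max(0,-u)$ is clean (the two-case check handles the $\max$ correctly, and $K\ge 0$ is used precisely where needed), and the induction giving $v(t)\le\|v\|_\infty M^n t^n/n!$ is immediate. One small remark on presentation: since $K$ is only assumed jointly measurable and bounded, the slices $K(t,\cdot)$ are measurable for a.e.\ $t$ rather than every $t$, so strictly speaking the hypothesis and your derived inequality should be read as holding for a.e.\ $t$; this does not affect the conclusion because the iteration already yields $v=0$ a.e., and then the hypothesis $u(t)\ge\int_0^tK(t,\xi)u(\xi)\,\mathrm{d}\xi\ge -M\int_0^t v\,\mathrm{d}\xi=0$ upgrades it to every $t$.
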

	
	\subsection{\hskip2mm Splitting the sets $E_n$ and investigation of their parts}
	\label{subsubsec:splitEn}
	
	We will use the following notation.\vskip2mm
	
	\noindent$P_1$ and $P_2$ are the projection
	mappings of the cartesian product $\R^2=\R\times\R$ on the first and
	the second factor respectively;\vskip1mm
	
	\noindent If $E$ is a subset of $\R^2$, then
	$S_1(E)\langle s\rangle$ and $S_2(E)\langle t\rangle$ are the sections
	of $E$ through the points $s\in P_1(E)$ and
	$t\in P_2(E)$, i.e.,
	\begin{equation*}
	S_1(E)\langle s\rangle=\{t\in\R\,:\,\langle s,t\rangle\in E\}
	\end{equation*}
	 and
	 \begin{equation*}
	 S_2(E)\langle t\rangle=\{s\in\R\,:\,\langle s,t\rangle\in E\}.
	 \end{equation*}
	 If the set $E$ is measurable and $\mathrm{mes}_2E<\infty$,
	then by Fubini's Theorem, $S_1(E)\langle s\rangle$ are measurable
	for almost all $s$, $S_2(E)\langle s\rangle$ are measurable for
	almost all $t$ and
	\begin{equation}\label{Fubini}
	\mathrm{mes}_2E=\int\limits_{P_1E}\mathrm{mes}_1\,S_1(E)\langle s\rangle\,\mathrm{d}s=
	\int\limits_{P_2E}\mathrm{mes}_1\,S_2(E)\langle t\rangle\,\mathrm{d}t.
	\end{equation}
	\vskip2mm
	
	Let us return to the sequence of continuous real-valued functions
	$u_n(t)$ $(t\in[0,r])$  and the sets
	$E_n\subseteq\Delta_r=\{\langle s,t\rangle\in\R^2:\,0< s< t<r)\}$, defined by
	\eqref{defEn} (see Section~\ref{subsec:appendformul}). Denote
	\begin{equation}\label{dfepsplmin}
	\epsilon_n^+=\{t\in(0,r)\,:\,u_n(t)>0\},\quad
	\epsilon_n^-=(0,r)\setminus\epsilon_n^+,
	\end{equation}
	\begin{equation}\label{dfEnplmin}
	E_n^\prime=((0,r)\times\epsilon_n^-)\cap E_n,\quad
	E_n^{\prime\prime}=((0,{}r)\times\epsilon_n^+)\cap E_n.
	\end{equation}
	It is clear that
	\begin{equation}\label{splitEn1}
	E_n=E_n^\prime\cup E_n^{\prime\prime}
	\end{equation}
	\begin{lemma}\label{treatEnprime}
		If $\lambda_n\to\infty$ for $n\to\infty$, then
		$\lim\limits_{n\to\infty}\mathrm{mes}_2E_n^\prime=0$.
	\end{lemma}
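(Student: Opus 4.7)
The plan is to reduce the bound on $\mathrm{mes}_2 E_n^\prime$ to a single application of Lemma~\ref{lmintineq} via a time-reversal in the $t$-variable, which exploits the defining sign condition $u_n(t)\le 0$ on $\epsilon_n^-$. For each fixed $t\in\epsilon_n^-$ I consider the section $Y_n(t):=\{s\in(0,t):\langle s,t\rangle\in E_n^\prime\}$ and introduce the reversed function $w_n^{(t)}(\sigma):=-u_n(t-\sigma)$ on $[0,t]$ together with the reversed section $X_n^{(t)}:=\{t-s:s\in Y_n(t)\}$, which has the same $\mathrm{mes}_1$-measure as $Y_n(t)$. After the change of variable $\eta=t-\xi$ in the integral appearing in the definition~\eqref{defEn} of $E_n$, and using the key observation that $t\in\epsilon_n^-$ yields $w_n^{(t)}(0)=-u_n(t)\ge 0$, the defining inequality of $E_n^\prime$ rewrites as
\begin{equation*}
w_n^{(t)}(\sigma)\;>\;w_n^{(t)}(0)+\int_0^\sigma\!\big(w_n^{(t)}(\eta)\big)^2\,\mathrm{d}\eta+\lambda_n\sigma\;\ge\;\int_0^\sigma\!\big(w_n^{(t)}(\eta)\big)^2\,\mathrm{d}\eta+\lambda_n\sigma
\end{equation*}
for every $\sigma\in X_n^{(t)}$, which is precisely the hypothesis \eqref{intineq} of Lemma~\ref{lmintineq} with $t_n=0$.

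Having this reduction, the conclusion follows from Fubini's theorem \eqref{Fubini} combined with a contradiction argument. Fubini gives $\mathrm{mes}_2 E_n^\prime=\int_{\epsilon_n^-}\mathrm{mes}_1 Y_n(t)\,\mathrm{d}t=\int_{\epsilon_n^-}\mathrm{mes}_1 X_n^{(t)}\,\mathrm{d}t\le r\cdot\sup_{t\in\epsilon_n^-}\mathrm{mes}_1 X_n^{(t)}$. If the conclusion $\mathrm{mes}_2 E_n^\prime\to 0$ failed, I would extract a subsequence $n_k$ with $\mathrm{mes}_2 E_{n_k}^\prime\ge c>0$ and, by the preceding estimate, points $t_k\in\epsilon_{n_k}^-$ with $\mathrm{mes}_1 X_{n_k}^{(t_k)}\ge c/(2r)$. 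Extending each $w_{n_k}^{(t_k)}$ trivially from $[0,t_k]$ to $[0,r]$ to obtain measurable bounded functions on a common interval, the diagonal sequence $(w_{n_k}^{(t_k)})_k$ together with the sets $X_{n_k}^{(t_k)}$ and the diverging parameters $\lambda_{n_k}\to+\infty$ falls under the hypotheses of Lemma~\ref{lmintineq}, which therefore forces $\mathrm{mes}_1 X_{n_k}^{(t_k)}\to 0$ and contradicts the lower bound $c/(2r)$.

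The main technical point is the reduction from a two-variable integral inequality to the one-variable one handled by Lemma~\ref{lmintineq}: the substitution $\eta=t-\xi$ converts the ``middle'' integral $\int_s^t u_n^2$ into a genuine initial-value integral $\int_0^\sigma (w_n^{(t)})^2$, but this is only useful because the sign condition $u_n(t)\le 0$ lets us discard the boundary term $w_n^{(t)}(0)$; this is precisely why the splitting \eqref{splitEn1} was introduced, and why the complementary case $E_n^{\prime\prime}$ requires a separate (symmetric but essentially different) argument. A secondary nuisance is the boundedness hypothesis of Lemma~\ref{lmintineq}: the $u_n$ are given only as continuous on $(0,r)$, but in the setting in which this lemma is invoked the functions $u^{(\by)}$ from \eqref{defut} are of class $C^1[0,r]$, so boundedness on $[0,r]$ is automatic and the trivial extension of $w_n^{(t)}$ by zero beyond $t_k$ produces admissible input for Lemma~\ref{lmintineq}.
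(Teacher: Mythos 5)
Your proof is correct and follows essentially the same route as the paper's: Fubini to reduce to a one-variable bound on the section in $s$, the reversal $\sigma=t-s$ with $w_n^{(t)}(\sigma)=-u_n(t-\sigma)$, dropping the nonnegative term $-u_n(t)$ since $t\in\epsilon_n^-$, and then contradicting Lemma~\ref{lmintineq} via a subsequence. The paper phrases the reduction as uniform convergence of $\mathrm{mes}_1\,S_2(E_n^\prime)\langle t\rangle$ rather than through an explicit supremum bound, but these are the same argument.
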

	\begin{proof}
		In view of~\eqref{Fubini}, it is sufficient to prove that
		$\mathrm{mes}_1\,S_2(E_n^\prime)\langle t\rangle \to 0$ for
		$n\to\infty$ uniformly with respect to
		$t\in P_2E_n^\prime=\epsilon_n^-\cap P_2E_n$.
		Assume on the contrary that for some subsequence of $E_n^\prime$
		there is a sequence $t_n\in P_2E_n^\prime$ such that
		$\mathrm{mes}_1\,S_2(E_n^\prime)\langle t_n\rangle>c>0$. In view of
		\eqref{defEn}, we have:
		\begin{equation*}
		u_n(t_n)-u_n(s)>\int\limits_s\limits^{t_n}u_n^2(\xi)\,
		\mathrm{d}\xi+ \lambda_n(t_n-s)\quad\mathrm{for}\quad
		s\in S_2(E_n^\prime)\langle t_n\rangle.
		\end{equation*}
		Deriving the change of the variables $z=t_n-s$, $v_n(z)=-u_n(t_n-z)$
		and taking into account that $u_n(t_n)\le 0$ (because
		$t_n\in\epsilon_n^-$), we get:
		\begin{equation*}
		v_n(z)>\int\limits_0\limits^zv_n^2(\xi)\,
		\mathrm{d}\xi+\lambda_nz,\quad z\in Z_n,
		\end{equation*}
		where $\mathrm{mes}_1Z_n>c>0$. The latter contradicts the claim of
		Lemma~\ref{lmintineq}.
	\end{proof}
	
	\begin{remark}\label{remcuts}
		Since in view of~\eqref{dfEnplmin}$_1$
		$S_2(E_n^\prime)\langle t\rangle=S_2(E_n)\langle t\rangle$ for
		$t\in P_2E_n^\prime$, then arguments of the proof of
		Lemma~\ref{treatEnprime} imply that
		$\mathrm{mes}_1\,S_2(E_n)\langle t\rangle \to 0$ for $n\to\infty$
		uniformly with respect to $t\in P_2E_n^\prime$.
	\end{remark}
	
	Now  we will treat the sets $E_n^{\prime\prime}$, defined by
	\eqref{dfEnplmin}$_2$. We will split them into two parts. Let us
	consider the sections $S_1(E_n)\langle s\rangle$ for
	$s\in P_1E_n$. Since  $S_1(E_n)\langle s\rangle$ and
	$\epsilon_n^+$ (defined by \eqref{dfepsplmin}$_1$) are open sets,
	then
	\begin{equation*}
		S_1(E_n)\langle s\rangle=\bigcup\limits_{i=1}\limits^\infty(p_i^{(n)}(s),\,q_i^{(n)}(s))
	\end{equation*}
and $\epsilon_n^+=\bigcup\limits_{j=1}\limits^\infty (s_j^{(n)},\,
	r_j^{(n)})$, where the intervals, taking part in each of these
	representations, are disjoint. Then in view
	of~\eqref{dfEnplmin}$_2$,   we have for
	$s\in P_1E_n^{\prime\prime}\langle s\rangle$:
	\begin{eqnarray}\label{rprEprpr}
	&&S_1(E_n^{\prime\prime})\langle s0\rangle=\bigcup\limits_{i,j=1}\limits^\infty(s_j^{(n)},\,
	r_j^{(n)})\cap
	(p_i^{(n)}(s),\,q_i^{(n)}(s))=\nonumber\\
	&&\bigcup\limits_{k=1}\limits^\infty(l_k^{(n)}(s),\,m_k^{(n)}(s)),
	\end{eqnarray}
	where $(l_k^{(n)}(s),\,m_k^{(n)}(s))=(s_j^{(n)},\, r_j^{(n)})\cap
	(p_i^{(n)}(s),\,q_i^{(n)}(s))$ for some $i,\,j$. It is easy to see
	that
	\begin{equation}\label{equiv}
	l_k^{(n)}(s)\in S_1(E_n)\langle s\rangle \Longrightarrow
	l_k^{(n)}(s)=s_j^{(n)}.
	\end{equation}
	If $s_j^{(n)}\in P_2E_n$, consider the set
	\begin{equation}\label{dfkappaj}
	\kappa_j^{(n)}=\big(S_2(E_n)\langle s_j^{(n)}\rangle\times(s_j^{(n)},\,
	r_j^{(n)})\big)\cap E_n.
	\end{equation}
	It is clear that the sets $\kappa_j^{(n)}$ are open and
	$\kappa_j^{(n)}\subseteq E_n^{\prime\prime}$.	Let us define the sets
	\begin{equation}\label{dfEnstar}
	E_n^\star=\bigcup\limits_{j=1}\limits^\infty\kappa_j^{(n)}.
	\end{equation}
	Then $E_n^\star\subseteq E_n^{\prime\prime}$ and
	$S_1(E_n^\star)\langle s\rangle=\bigcup\limits_{j=1}\limits^\infty S_1(\kappa_j^{(n)})\langle s\rangle$
	for $s\in P_1E_n^\star$, where
	\begin{equation*}
	S_1(\kappa_j^{(n)})\langle s\rangle=(s_j^{(n)},\,
	r_j^{(n)})\cap S_1(E_n)\langle s\rangle
	\end{equation*}
	 for
	$s\in P_1\kappa_j^{(n)}=S_2()E_n\langle s_j^{(n)}\rangle$.
	The last equality is equivalent to
	\begin{equation*}
	s_j^{(n)}\in S_1(E_n)\langle s\rangle.
	\end{equation*}
	 Hence  we have:
	\begin{eqnarray}\label{cutEnst}
	&&S_1(E_n^\star)\langle s\rangle=\bigcup\limits_{j\,:\,s_j^{(n)}\in S_1(E_n)\langle s\rangle}(s_j^{(n)},\,
	r_j^{(n)})\cap S_1E_n\langle s\rangle=\nonumber\\
	&&\bigcup\limits_{i=1}\limits^\infty\bigcup\limits_{j\,:\,s_j^{(n)}\in S_1(E_n)\langle s\rangle}(s_j^{(n)},\,
	r_j^{(n)})\cap(p_i^{(n)}(s),\,q_i^{(n)}(s)).
	\end{eqnarray}
	Let us define the sets $E_n^{\star\star}=E_n^{\prime\prime}\setminus
	E_n^\star$. Then
	\begin{equation*}
	S_1(E_n^{\star\star})\langle s\rangle=S_1(E_n^{\prime\prime})\langle s\rangle\setminus
	S_1(E_n^\star)\langle s\rangle.
	\end{equation*}
	 Taking into account
	\eqref{rprEprpr}, \eqref{cutEnst} and \eqref{equiv},  we get
	\begin{equation}\label{cutEnstst}
 S_1(E_n^{\star\star})\langle  s\rangle =\bigcup\limits_{i=1}\limits^\infty(l_{k_i}^{(n)}(s),\,m_{k_i}^{(n)}(s)),
	\end{equation}
	where	
	\begin{equation}\label{lknotin}
	l_{k_i}^{(n)}(s)\notin S_1(E_n)\langle s\rangle
	\end{equation}
		Here $(l_{k_i}^{(n)}(s),\,m_{k_i}^{(n)}(s))$ is a subsequence of
	intervals, forming the set\\
	$S_1(E_n^{\prime\prime})\langle s\rangle$. Taking into account
	\eqref{splitEn1},  we get:
	\begin{equation}\label{splitEn2}
	E_n=E_n^\prime\cup E_n^\star\cup E_n^{\star\star}.
	\end{equation}
	
	\begin{lemma}\label{treatEnst}
		If $\lambda_n\to\infty$ for $n\to\infty$, then
		$\lim\limits_{n\to\infty}\mathrm{mes}_2E_n^\star=0$.
	\end{lemma}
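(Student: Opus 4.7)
The plan is to bound $\mathrm{mes}_2 E_n^\star$ by decomposing $E_n^\star$ into the rectangular slabs $\kappa_j^{(n)}$ and applying the uniform section estimate from Remark \ref{remcuts}. The key preliminary observation is that the left endpoints $s_j^{(n)}$ appearing in \eqref{dfkappaj} actually lie in $\epsilon_n^-$. Indeed, each $(s_j^{(n)}, r_j^{(n)})$ is a maximal open interval of the open set $\epsilon_n^+ = \{u_n > 0\}$, so (assuming $s_j^{(n)} > 0$; the case $s_j^{(n)} = 0$ does not contribute) continuity of $u_n$ forces $u_n(s_j^{(n)}) = 0$, whence $s_j^{(n)} \in \epsilon_n^-$. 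In particular, whenever $s_j^{(n)} \in P_2 E_n$ we have $s_j^{(n)} \in P_2 E_n^\prime = \epsilon_n^- \cap P_2 E_n$.

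First I would use the definition of $\kappa_j^{(n)}$ to estimate
\begin{equation*}
\mathrm{mes}_2 \kappa_j^{(n)} \;\le\; \mathrm{mes}_1\, S_2(E_n)\langle s_j^{(n)}\rangle \,\cdot\,(r_j^{(n)} - s_j^{(n)}),
\end{equation*}
viewing $\kappa_j^{(n)}$ as a subset of the rectangle $S_2(E_n)\langle s_j^{(n)}\rangle \times (s_j^{(n)}, r_j^{(n)})$. Summing over $j$, and using that the intervals $(s_j^{(n)}, r_j^{(n)})$ are pairwise disjoint with $\sum_j (r_j^{(n)} - s_j^{(n)}) = \mathrm{mes}_1 \epsilon_n^+ \le r$, I obtain
\begin{equation*}
\mathrm{mes}_2 E_n^\star \;\le\; r\,\cdot\,\sup_{j\,:\,s_j^{(n)} \in P_2 E_n}\mathrm{mes}_1\, S_2(E_n)\langle s_j^{(n)}\rangle.
\end{equation*}

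Now I apply Remark \ref{remcuts}: since every such $s_j^{(n)}$ belongs to $P_2 E_n^\prime$, and $\mathrm{mes}_1\, S_2(E_n)\langle t\rangle \to 0$ uniformly for $t \in P_2 E_n^\prime$, the supremum on the right tends to $0$ as $n \to \infty$. This yields $\mathrm{mes}_2 E_n^\star \to 0$, completing the proof.

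The main obstacle is the bookkeeping at Step 1: one must correctly verify that the base points $s_j^{(n)}$ of the rectangles defining $\kappa_j^{(n)}$ fall into $\epsilon_n^-$, since otherwise Remark \ref{remcuts} does not apply and a separate Riccati-type blow-up argument (as in Lemma \ref{lmintineq}) would be required. Once this is in place, the rest is a clean Fubini-type summation against the uniform bound supplied by the preceding remark.
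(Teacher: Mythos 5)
Your proof is correct and takes essentially the same approach as the paper: both rely on Remark \ref{remcuts} applied to the left endpoints $s_j^{(n)}\in\epsilon_n^-$, together with the disjointness of the intervals $(s_j^{(n)},r_j^{(n)})$ forming $\epsilon_n^+$. The only difference is packaging: the paper proves that the $t$-sections $S_2(E_n^\star)\langle t\rangle$ shrink uniformly and then invokes Fubini, whereas you bound each $\kappa_j^{(n)}$ by a rectangle and sum; your explicit verification that $u_n(s_j^{(n)})=0$ (hence $s_j^{(n)}\in\epsilon_n^-$) is a point the paper leaves implicit.
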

	\begin{proof}
		Since the intervals $(s_j^{(n)},\, r_j^{(n)})$, forming the set
		$\epsilon_n^+$ (defined by~\eqref{dfepsplmin}$_1$), are disjoint,  we
		have from \eqref{dfEnstar} and \eqref{dfkappaj} that for
		$t\in P_2\kappa_j^n\;$
		$S_2(E_n^\star)\langle t\rangle=S_2(\kappa_j^n)\langle t\rangle$ and
		$S_2(\kappa_j^n)\langle t\rangle =S_2(E_n)\langle s_j^n\rangle\cap S_2(E_n)\langle t\rangle
		\subseteq S_2(E_n)\langle s_j^n\rangle$. Thus,
		\begin{equation}\label{cutsinclus}
		\forall\;t\in P_2\kappa_j^n:\quad S_2(E_n^\star)\langle t\rangle\subseteq S_2(E_n)\langle s_j^n\rangle.
		\end{equation}
		Since $s_j^n\in\epsilon_n^-$, then by Remark \ref{remcuts},
		$\mathrm{mes}_1S_2(E_n)\langle s_j^n\rangle\to 0$ for $n\to\infty$
		uniformly with respect to $j$. Then inclusion \eqref{cutsinclus} and
		equality
		$P_2E_n^\star=\bigcup\limits_{j=1}\limits^\infty P_2\kappa_j^n$
		imply that $\mathrm{mes}_1 S_2(E_n^\star)\langle t\rangle\to 0$ for
		$n\to\infty$ uniformly with respect to $t\in P_2E_n^\star$.
		This circumstance and equality \eqref{Fubini} imply the desired
		claim.
	\end{proof}
	
	\begin{lemma}\label{treatEnstst}
		If $\lambda_n\to\infty$ for $n\to\infty$, then
		$\lim\limits_{n\to\infty}\mathrm{mes}_2E_n^{\star\star}=0$.
	\end{lemma}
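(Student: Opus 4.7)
\emph{Plan.} The plan is to extract from the structure of $E_n^{\star\star}$ an integral inequality of the form required by Lemma~\ref{lmintineq}, and then deduce the measure bound via Fubini.

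Fix $s\in P_1 E_n^{\star\star}$ and consider one of the intervals $(l,m)=(l_{k_i}^{(n)}(s),\,m_{k_i}^{(n)}(s))$ from the decomposition~\eqref{cutEnstst}. By~\eqref{lknotin}, $l\notin S_1(E_n)\langle s\rangle$, so by continuity of $u_n$ the reverse non-strict inequality defining $E_n$ holds at $l$:
\[
u_n(l)-u_n(s)\le\int_s^l u_n^2(\xi)\,\mathrm{d}\xi+\lambda_n(l-s).
\]
Since $(l,m)\subset S_1(E_n)\langle s\rangle$, the strict inequality from~\eqref{defEn} holds for $t\in(l,m)$. Subtracting the two inequalities gives
\[
u_n(t)-u_n(l) > \int_l^t u_n^2(\xi)\,\mathrm{d}\xi+\lambda_n(t-l),\qquad t\in(l,m).
\]
By the construction in \eqref{rprEprpr}--\eqref{equiv}, $l=\max\{p_{i'}^{(n)}(s),s_{j'}^{(n)}\}$ lies in the closure of some component $(s_{j'}^{(n)},r_{j'}^{(n)})$ of $\epsilon_n^+$; hence $u_n(l)\ge 0$, yielding
\[
u_n(t)>\int_l^t u_n^2(\xi)\,\mathrm{d}\xi+\lambda_n(t-l),\qquad t\in(l,m),
\]
which is exactly the integral inequality~\eqref{intineq} of Lemma~\ref{lmintineq}, with base point $t_n=l$.

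I then apply Fubini~\eqref{Fubini} along the $s$-axis to get
\[
\mathrm{mes}_2 E_n^{\star\star}=\int_{P_1 E_n^{\star\star}}\mathrm{mes}_1 S_1(E_n^{\star\star})\langle s\rangle\,\mathrm{d}s,
\]
and reduce the claim to the uniform bound $\mathrm{mes}_1 S_1(E_n^{\star\star})\langle s\rangle\to 0$ as $n\to\infty$. Arguing by contradiction, suppose that along some subsequence there exist points $s_n$ with $\mathrm{mes}_1 S_1(E_n^{\star\star})\langle s_n\rangle\ge c>0$. Via the change of variable $z(t)=\int_0^t\chi_{S_1(E_n^{\star\star})\langle s_n\rangle}(\xi)\,\mathrm{d}\xi$, mapping $S_1(E_n^{\star\star})\langle s_n\rangle$ bijectively (modulo a null set) onto $[0,b_n)$ with $b_n\ge c$, the collection of piecewise inequalities above concatenates into a single integral inequality on $[0,b_n)$ for the transformed function $v_n(z)=u_n(t(z))$. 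The Riccati comparison used in the proof of Lemma~\ref{lmintineq} (against the blowing-up solution $w_n(z)=\sqrt{\lambda_n}\tan(\sqrt{\lambda_n}z)$, whose blow-up time $\pi/(2\sqrt{\lambda_n})$ tends to zero) then contradicts the boundedness of $u_n$ on $[0,r]$, giving the desired conclusion.

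\emph{Main obstacle.} The principal technical difficulty lies in justifying the concatenation step: because the base point $l=l_{k_i}^{(n)}(s_n)$ depends on the subinterval, Lemma~\ref{lmintineq} applied interval by interval controls only each individual length $m_{k_i}^{(n)}(s_n)-l_{k_i}^{(n)}(s_n)$, not their sum. Passing to the $z$-variable removes the gaps between the subintervals, but one must verify that the Riccati integral inequality survives across the resets. The key ingredient is the non-negativity $u_n(l_{k_i})\ge 0$, which ensures that the transformed function $v_n$ does not jump downward at the transition points and therefore remains above the Riccati comparison solution throughout $[0,b_n)$; this is precisely what permits the single-interval contradiction argument of Lemma~\ref{lmintineq} to be reused in the concatenated setting.
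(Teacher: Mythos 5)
Your setup (per-interval subtraction, $u_n(l)\ge 0$, Fubini reduction, contradiction with a subsequence $s_n$ of large sections) is structurally the same as the paper's, and you correctly pinpoint the core difficulty. But the resolution you offer for that difficulty does not work. At a transition point $a_k=z(l_{k_i}^{(n)}(s_n))$ of the concatenated variable the only information carried over is $v_n(a_k^+)=u_n(l_{k_i})\ge 0$; to continue the comparison $v_n\ge w_n$ with $w_n(z)=\sqrt{\lambda_n}\tan(\sqrt{\lambda_n}z)$ past $a_k$ you would need $v_n(a_k^+)\ge w_n(a_k)$, and $w_n(a_k)>0$ once $a_k>0$. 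What the per-interval inequality together with $v_n(a_k^+)\ge 0$ actually gives is $v_n(z)\ge\sqrt{\lambda_n}\tan\bigl(\sqrt{\lambda_n}(z-a_k)\bigr)$ on $(a_k,b_k)$, i.e.\ a freshly reset Riccati comparison whose blow-up time has been pushed out to $a_k+\pi/(2\sqrt{\lambda_n})$. This controls each $b_k-a_k$ individually by $\pi/(2\sqrt{\lambda_n})$, but because the number of components is unbounded it yields no bound on $\sum_k(b_k-a_k)=b_n$. So ``not jumping downward'' is strictly weaker than what is needed, and the concatenation step fails.

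The paper closes this gap by not concatenating at all. Two facts, neither of which your argument exploits, let one work with a \emph{single} base point: (1) the reverse non-strict inequality at a left endpoint $l_k$ is actually an \emph{equality}, because the continuous function $\alpha_n(t)=u_n(t)-u_n(s_n)-\int_{s_n}^tu_n^2\,\mathrm{d}\xi-\lambda_n(t-s_n)$ satisfies $\alpha_n(l_k)\le 0$ (since $\langle s_n,l_k\rangle\notin E_n$) and $\alpha_n>0$ on $(l_k,m_k)$, forcing $\alpha_n(l_k)=0$; and (2) the strict inequality $\alpha_n(t)>0$ holds for \emph{every} $t\in S_1(E_n^{\star\star})\langle s_n\rangle$, not only those in the component containing $l_k$. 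Choosing one left endpoint $\tilde l_n$ close to $\mu_n=\inf S_1(E_n^{\star\star})\langle s_n\rangle$, subtracting the equality at $\tilde l_n$ from the inequality at an arbitrary $t\in X_n=S_1(E_n^{\star\star})\langle s_n\rangle\cap(\tilde l_n,r)$, and using $u_n(\tilde l_n)\ge 0$, one obtains
\begin{equation*}
u_n(t)>\int_{\tilde l_n}^t u_n^2(\xi)\,\mathrm{d}\xi+\lambda_n(t-\tilde l_n),\qquad t\in X_n,
\end{equation*}
which is exactly hypothesis \eqref{intineq} of Lemma~\ref{lmintineq} with one fixed base point $t_n=\tilde l_n$, on a set $X_n$ of measure bounded below. (The case $u_n(s_n)\ge 0$ is even simpler: \eqref{intineqcutEnstst} already has the required form with base point $s_n$.) Your proof must be reorganized around this single-base-point reduction rather than the per-interval concatenation.
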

	\begin{proof}
		We should prove that
		$\mathrm{mes}_1 S_1(E_n^{\star\star})\langle s\rangle\to 0$ for
		$n\to\infty$ uniformly with respect to
		$s\in P_1E_n^{\star\star}$. Assume on the contrary that for
		some subsequence $E_n^{\star\star}$ there is a sequence
		$s_n\in P_1E_n^{\star\star}$ such that
		$\mathrm{mes}_1 S_1(E_n^{\star\star})\langle s_n\rangle >c_1>0$.
		According to \eqref{defEn},  we have for $t\in
		S_1(E_n^{\star\star})\langle s_n\rangle$:
		\begin{equation}\label{intineqcutEnstst}
		u_n(t)-u_n(s_n)>\int\limits_{s_n}\limits^tu_n^2(\xi)\,
		\mathrm{d}\xi+ \lambda_n(t-s_n)
		\end{equation}
		Consider two cases:\vskip1mm
		
		1) for some subsequence $s_n\;$ $u_n(s_n)\ge 0$;\vskip1mm
		
		2) there is $N_1>0$ such that $u_n(s_n)<0$ for $n\ge N_1$.\vskip1mm
		
		In the case 1) \eqref{intineqcutEnstst} implies the inequalities
		\begin{equation*}
		u_n(t)>\int\limits_{s_n}\limits^tu_n^2(\xi)\, \mathrm{d}\xi+
		\lambda_n(t-s_n),\quad t\in
		S_1(E_n^{\star\star})\langle s_n\rangle,
		\end{equation*}
		which contradict the claim of Lemma \ref{lmintineq}.
		
		We now turn to the case 2). Assume that $n\ge N_1$. In view of
		\eqref{cutEnstst}-\eqref{lknotin},
		$S_1(E_n^{\star\star})\langle s_n\rangle=\bigcup\limits_{k=1}\limits^\infty(l_k^{(n)},\,m_k^{(n)})$
		with $l_k^{(n)}\notin S_1(E_n)\langle s_n\rangle$. Since the
		functions $u_n(t)$ are continuous, $u_n(s_n)<0$ and
		$(l_k^{(n)},\,m_k^{(n)})\subseteq\epsilon_n^+\cap(s_n,\,r)$, then
		$l_k^{(n)}>s_n$. Denote $\mu_n=\inf
		S_1(E_n^{\star\star})\langle s_n\rangle$. For any $n$ let me choose
		one of the left endpoints $\tilde l_n=l_{k_n}^{(n)}$ of the
		intervals forming $S_1(E_n^{\star\star})\langle s_n\rangle$ such
		that $\lim\limits_{n\to\infty}(\tilde l_n-\mu_n)=0$. Denote
		$X_n=S_1(E_n^{\star\star})\langle s_n\rangle\cap (\tilde l_n,\,r)$.
		Observe that $(\tilde l_n,\,\tilde m_n)$ with $\tilde
		m_n=m_{k_n}^{(n)}$ is the very left interval among the intervals
		forming $X_n$. Then there is $N_2\ge N_1$ such that
		$\mathrm{mes}_1X_n>c_2>0$ for $n\ge N_2$. Consider the continuous
		function
		\begin{equation*}
		\alpha_n(t)=u_n(t)-u_n(s_n)-\int\limits_{s_n}\limits^tu_n^2(\xi)\,
		d\xi-\lambda_n(t-s_n).
		\end{equation*}
		Since $\tilde l_n\notin S_1(E_n)\langle s_n\rangle$ and $\tilde
		l_n>s_n$, i.e.,$(s_n,\,\tilde l_n)\notin E_n$ and $()s_n,\,\tilde
		l_n\>\in\Delta_r$, then in view of \eqref{defEn}, $\alpha_n(\tilde
		l_n)\le 0$. On the other hand, in view of \eqref{intineqcutEnstst},
		$\alpha_n(t)>0$ for $t\in(\tilde l_n,\,\tilde m_n)$. Hence
		$\alpha_n(\tilde l_n)=0$, i.e.,
		\begin{equation}\label{equality}
		u_n(\tilde l_n)-u_n(s_n)=\int\limits_{s_n}\limits^{\tilde
			l_n}u_n^2(\xi)\, \mathrm{d}\xi+\lambda_n(\tilde l_n-s_n).
		\end{equation}
		Observe that since $u_n(t)>0$ for $t\in(\tilde l_n,\,\tilde m_n)$,
		then $u_n(\tilde l_n)\ge 0$. Taking into account this fact and
		subtracting \eqref{equality} from \eqref{intineqcutEnstst},  we get
		the inequalities
		\begin{equation*}
		u_n(t)>\int\limits_{\tilde l_n}\limits^tu_n^2(\xi)\,
		\mathrm{d}\xi+ \lambda_n(t-\tilde l_n),\quad t\in X_n.
		\end{equation*}
		Again  we get the contradiction with the claim of Lemma
		\ref{lmintineq}. 
	\end{proof}
	
	\subsection{\hskip2mm Proof of Proposition \ref{mainprAp2}}
	\label{subsec:proofprAp2}
	\begin{proof}
		The claim follows immediately from representation \eqref{splitEn2},
		Lemmas \ref{treatEnprime}, \ref{treatEnst} and
		\ref{treatEnstst}.
	\end{proof}

\end{document}